\renewcommand{\arraystretch}{1.7}
\renewcommand{\theequation}{\thesection.\arabic{equation}}
\newtheorem{lem}{Lemma}[section]
\newtheorem{thm}{Theorem} [section]
\newtheorem{prop}{Proposition} [section]
\newtheorem{example}{Example} [section]
\newcommand{\Tr}{{\rm Tr}}
\newcommand{\N}{{\rm N}}
\newtheorem*{conflict of interest}{Conflict of interest}
\title{ New permutation polynomials over $\mathbb{F}_{q^2}$
\thanks{Supported By NSF of China No. 12171163, 12571003, 12501006 }}
\author{Xuan Pang$^a$,~ Pingzhi Yuan$^a$\footnote{Corresponding author. E-mail: yuanpz@scnu.edu.cn(P. Yuan).}, ~Danyao Wu$^b$,  ~Huanhuan Guan$^c$ \\
\small  \it $^a$School of  of Mathematical Science, South China Normal University, Guangzhou 510631, China\\
\small \it  $^b$School of Computer Science and Technology, Dongguan University of Technology,
Dongguan 523808, China\\
\small  \it $^c$School of Mathematics and Statistics, Guizhou University of Finance and
Economics, Guiyang 550025, China
}
\date{}
\begin{document}
\baselineskip15pt \maketitle
\renewcommand{\theequation}{\arabic{section}.\arabic{equation}}
\catcode`@=11 \@addtoreset{equation}{section} \catcode`@=12

    \begin{abstract}In this paper, we propose a new method to obtain new permutation polynomials over $\mathbb{F}_{q^2}$. Using this method, we extend many known permutation polynomials, which take the form $\sum_i(x^q-x+\delta)^{s_i}+L(x)$, where $L(x)$ is a $q$-polynomial over $\mathbb{F}_q$ and $\delta\in\mathbb{F}_{q^2}$. We also present an alternative approach for constructing permutation polynomials of the form $x+\gamma\Tr_q^{q^d}(x^{q+1}+x^{2q+2})$ for the cases where $q=2^m$, $2\nmid d$ and $\Tr_q^{q^d}(x)=x+x^q+\dots+x^{q^{d-1}}$.

\end{abstract}

{\bf Keywords:}
 Finite field, permutation polynomial, algebraic structure, trace function.

\section{Introduction}

\,\,\, Let $q$ be a prime power, $\mathbb{F}_q$ be the finite field of order $q$, and $\mathbb{F}_q[x]$
be the ring of polynomials in a single indeterminate $x$ over $\mathbb{F}_q$. A polynomial
$f(x) \in\mathbb{F}_q[x]$ is called a {\em permutation polynomial} (PP) of $\mathbb{F}_q$ if it induces
a bijective map from $\mathbb{F}_q$ to itself.

Permutation polynomials over finite fields have attracted significant attention due to their wide-ranging applications, especially in coding theory \cite{Ding,Ding-Zhou,Laigle-Chapuy}, cryptography \cite{Rivest-Shamir-Adelman,Schwenk-Huber}, combinatorial design theory \cite{Ding-Yuan}, and other areas of mathematics and engineering \cite{Lidl-Niederreiter2,Mullen}. Constructing a class of permutation polynomials with a simple form or determining whether a class of polynomials is a permutation polynomial is an interesting and challenging problem.

Helleseth and Zinoviev \cite{Hellzino2003} applied permutation polynomials of the form $(x^2 + x + \delta)^{-2^l}+ x$ to derive new
Kloosterman sums identities over $\mathbb{F}_{2^n}$,  where $\delta\in\mathbb{F}_{2^n}$ and $l = 0, 1$. Motivated by this work, permutation polynomials of the form
\begin{equation}
(x^{p^i}- x + \delta)^s+ L(x)\tag{1}
\end{equation}
over $\mathbb{F}_{p^n}$ were investigated \cite{YuanD07,YuanDW08}, where $n, i, s$ are positive integers and $L(x)$ is a linearized polynomial with coefficients in $\mathbb{F}_p$. Following the approaches in  \cite{YuanD07,YuanDW08}, numerous permutation polynomials with structures similar to (1) have been constructed \cite{Li-hell2013,Liu-chen-zheng2025,
WangWL17,TuZJ15,TuZLH15,XuCX16,XuFZ19,XuLC22,zeng2010,zha2012,zha2016,ZhengC17}. In addition, noting that several permutation polynomials over finite fields of characteristic 2 were constructed in \cite{TuZJ15,zeng2010}, Zeng et al. considered polynomials over $\mathbb{F}_{2^n}$ of the form
 \begin{equation*}
 (x^{2^i}+x+\delta)^{s_1}+(x^{2^i}+x+\delta)^{s_2}+x
 \end{equation*}
 and subsequently presented eight new classes of permutation polynomials with this type \cite{ZengZLL17}. More recently, similar investigations were carried out for polynomials of the form
 \begin{equation*}
 (x^q-x+\delta)^{s_1}+(x^q-x+\delta)^{s_2}+L(x)
 \end{equation*}
 over $\mathbb{F}_{q^2}$ with odd characteristic \cite{Li-Cao2023,LiWLZ18,Liu-Jiang-Zou2025,Liu-xie2021}.


%
%

The main purpose of this paper is to construct several new classes of permutation polynomials over $\mathbb{F}_{q^2}$ with the form
  \begin{equation*}
 (x^{q}-x+\delta)^{s}+\gamma L(x);~~{\rm or}\tag{2}
  \end{equation*}
\begin{equation*}
 (x^{q}-x+\delta)^{s_1}+(x^q-x+\delta)^{s_2}+\gamma L(x),\tag{3}
  \end{equation*}
where $\gamma\in\mathbb{F}_{q^2}^*$, by using a novel approach that differs from  AGW criterion.

The remainder of this paper is structured as follows. In Section 2, we introduce a novel approach for verifying whether a given polynomial is a permutation. This method provides an alternative to classical techniques such as the AGW criterion.
 It also enables the analysis of certain polynomials whose permutation properties  are hard to determine with existing methods.
In Section 3, we apply this method to construct some classes of permutation polynomials of the forms given in (2) and (3) over finite fields. In Section 4,   drawing inspiration from the work of Jiang, Li, and Qu \cite{LiKQ2026}, we present an  approach to constructing permutation polynomials with certain special structures. Finally, Section 5 concludes this paper and outlines directions for future work.

\section{General results}
\,\,\, It is well known that $\mathbb{F}_{q^n}$ and $\mathbb{F}_{q}^n$ are isomorphic as vector spaces over $\mathbb{F}_q$. This isomorphism allows us to study PPs by translating an univariate polynomial $f(x) \in \mathbb{F}_{q^n}[x]$ into  multivariate polynomial map $F = (f_1(x_1,\dots,x_n), \dots, f_n(x_1,\dots,x_n))\in\mathbb{F}_q^n[x_1,\dots,x_n]$ \cite{Evoyan-Kyu1023}. In this section, we give a clear characterization of this correspondence. The arguments presented are elementary and rely only on linear algebra.


\begin{prop}\label{pro1}
Let $\{\alpha_1, \alpha_2, \dots, \alpha_n\}$ be a basis of $\mathbb{F}_{q^n}$ over $\mathbb{F}_q$. For a polynomial $f(x)\in\mathbb{F}_{q^n}[x]$ and $a_i, b_i\in \mathbb{F}_q$, $1\le i\le n$, we let
$$x=(x_1+a_1, \ldots, x_n+a_n)A(\alpha_1, \dots, \alpha_n)^T, $$
$$f(x)=(f_1(x_1, \dots, x_n)+b_1, \dots, f_n(x_1, \dots, x_n)+b_n)B(\alpha_1, \dots, \alpha_n)^T,$$
where $A=(a_{ij})_{n\times n}, B=(b_{ij})_{n\times n}\in M_n(\mathbb{F}_q)$ are invertible matrices over $\mathbb{F}_q$, and $(\alpha_1, \dots, \alpha_n)^T$ denotes the transpose of $(\alpha_1, \dots, \alpha_n)$. Then $f(x)$ is a PP over $\mathbb{F}_{q^n}$ if and only if $(f_1(x_1, \dots, x_n), \dots, f_1(x_1, \dots, x_n))$ permutes $\mathbb{F}_q^n$.
 \end{prop}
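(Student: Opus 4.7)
The plan is to view the statement as a routine translation of permutation behavior through two auxiliary bijections and then invoke the fact that the composition of bijections is a bijection. Concretely, I would set up two coordinate maps $\Phi,\Psi:\mathbb{F}_q^n\to\mathbb{F}_{q^n}$ determined by the data $(A,a_i)$ and $(B,b_i)$, show each is a bijection, and observe that the multivariate map $F=(f_1,\dots,f_n)$ is precisely $\Psi^{-1}\circ f\circ\Phi$.

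First I would define
$$\Phi(x_1,\dots,x_n)=(x_1+a_1,\dots,x_n+a_n)\,A\,(\alpha_1,\dots,\alpha_n)^{T}.$$
This is the composition of three bijections on $\mathbb{F}_q^n$ (respectively $\mathbb{F}_q^n\to\mathbb{F}_{q^n}$): the translation $(x_i)\mapsto(x_i+a_i)$, the $\mathbb{F}_q$-linear isomorphism given by the invertible matrix $A$, and the $\mathbb{F}_q$-linear isomorphism $(y_1,\dots,y_n)\mapsto\sum_i y_i\alpha_i$ coming from the basis $\{\alpha_1,\dots,\alpha_n\}$. Since each step is a bijection, $\Phi$ is a bijection. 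An identical argument, with $(A,a_i)$ replaced by $(B,b_i)$, gives a bijection $\Psi:\mathbb{F}_q^n\to\mathbb{F}_{q^n}$ sending $(y_1,\dots,y_n)$ to $(y_1+b_1,\dots,y_n+b_n)B(\alpha_1,\dots,\alpha_n)^{T}$.

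Next I would observe that the $n$-tuple $(f_1(x_1,\dots,x_n),\dots,f_n(x_1,\dots,x_n))$ in the statement is precisely $\Psi^{-1}\bigl(f(\Phi(x_1,\dots,x_n))\bigr)$: the rows of $B$ applied to $(\alpha_1,\dots,\alpha_n)^{T}$ form a new $\mathbb{F}_q$-basis of $\mathbb{F}_{q^n}$ (because $B$ is invertible), hence every element of $\mathbb{F}_{q^n}$ has unique coordinates in that basis, and after subtracting the shift $(b_1,\dots,b_n)$ these coordinates define the functions $f_i$ unambiguously. This is the only point requiring a moment of care, and it follows entirely from the invertibility of $B$.

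Having set up $f\circ\Phi=\Psi\circ F$ with $F=(f_1,\dots,f_n)$, the conclusion is immediate: $f$ is a permutation of $\mathbb{F}_{q^n}$ iff the conjugate $\Psi^{-1}\circ f\circ\Phi=F$ is a permutation of $\mathbb{F}_q^n$, because $\Phi$ and $\Psi$ are bijections. I do not expect any genuine obstacle here; the proof is essentially bookkeeping in linear algebra, and the main thing to emphasize in the write-up is that invertibility of $A$ and $B$ is exactly what is needed to turn the two affine changes of coordinates into honest bijections so that the well-definedness of the $f_i$'s and the reduction to a multivariate permutation question both go through.
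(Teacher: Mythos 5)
Your proposal is correct and rests on the same idea as the paper's proof: both exploit that the affine coordinate changes determined by $(A,a_i)$ and $(B,b_i)$ together with the basis $\{\alpha_1,\dots,\alpha_n\}$ are bijections between $\mathbb{F}_q^n$ and $\mathbb{F}_{q^n}$. The paper carries out the two-directional element chase explicitly (solving $f(x)=c$ and $(f_1,\dots,f_n)=(c_1,\dots,c_n)$ for arbitrary targets), whereas you package the identical content as the single identity $F=\Psi^{-1}\circ f\circ\Phi$ and conclude by composition of bijections --- a tidier write-up of the same argument, with no gap.
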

\begin{proof} We first prove that if $f(x)$ is a PP over $\mathbb{F}_{q^n}$, then $(f_1(x_1, \dots, x_n), \dots, f_1(x_1, \dots, x_n))$ permutes $\mathbb{F}_q^n$.

 For any element $(c_1, \dots, c_n)\in\mathbb{F}_q^n$. Let $c=(c_1+b_1, \dots, c_n+b_n)B(\alpha_1, \dots, \alpha_n)^T\in\mathbb{F}_{q^n}$. Since $f(x)$ is a PP over $\mathbb{F}_{q^n}$, $f(x)=c$ has a unique solution $x\in\mathbb{F}_{q^n}$, so $x=(y_1, \dots, y_n)(\alpha_1, \dots, \alpha_n)^T$ with $(y_1, \dots, y_n)\in\mathbb{F}_q^n$. Let
$$(x_1, \dots, x_n)=(y_1, \dots, y_n)A^{-1}-(a_1, \dots, a_n).$$
Then
\begin{align*}
f(x)&=f((y_1, \dots, y_n)(\alpha_1, \dots, \alpha_n)^T)\\
 &=f((x_1+a_1, \ldots, x_n+a_n)A(\alpha_1, \dots, \alpha_n)^T)\\
 &=(f_1(x_1, \dots, x_n)+b_1, \dots, f_n(x_1, \dots, x_n)+b_n)B(\alpha_1, \dots, \alpha_n)^T\\
 &=((c_1, \dots, c_n)+(b_1, \dots, b_n))B(\alpha_1, \dots, \alpha_n)^T.
\end{align*}
It follows that $(f_1, \dots, f_n)=(c_1, \dots, c_n)$ has a solution $(x_1, \dots, x_n)\in\mathbb{F}_q^n$ since $\{\alpha_1, \alpha_2, \dots, \alpha_n\}$ is a basis of $\mathbb{F}_{q^n}$ over $\mathbb{F}_q$. Hence $(f_1(x_1, \dots, x_n), \dots, f_1(x_1, \dots, x_n))$ permutes $\mathbb{F}_q^n$.

Conversely, suppose that $(f_1(x_1, \dots, x_n), \dots, f_1(x_1, \dots, x_n))$ permutes $\mathbb{F}_q^n$. For any $c'\in\mathbb{F}_q^n$, we write $c'=(c_1', \dots, c_n')(\alpha_1, \dots, \alpha_n)^T$, and define
$$(c_1, \dots, c_n)=(c_1', \dots, c_n')B^{-1}-(b_1, \dots, b_n).$$
Then $(c_1+b_1, \dots, c_n+b_n)B=(c_1', \dots, c_n')$. Since $(f_1(x_1, \dots, x_n), \dots, f_1(x_1, \dots, x_n))$ permutes $\mathbb{F}_q^n$, $(f_1, \dots, f_n)=(c_1, \dots, c_n)$ has a solution $(x_1, \dots, x_n)\in\mathbb{F}_q^n$, and thus
$$(f_1(x_1, \dots, x_n)+b_1, \dots, f_n(x_1, \dots, x_n)+b_n)B=(c_1', \dots, c_n').$$
By the assumption, we get
$$f((x_1+a_1, \ldots, x_n+a_n)A(\alpha_1, \dots, \alpha_n)^T)=(c_1', \dots, c_n')(\alpha_1, \dots, \alpha_n)^T=c'.$$
Let $(y_1, \dots, y_n)=(x_1+a_1, \ldots, x_n+a_n)A$ and $x=(y_1, \dots, y_n)(\alpha_1, \dots, \alpha_n)^T$. Then we have $f(x)=c'$, i.e., $f(x)=c'$ has a solution $x=(y_1, \dots, y_n)(\alpha_1, \dots, \alpha_n)^T$. Hence $f(x)$ is a PP over $\mathbb{F}_{q^n}$. This completes the proof.\end{proof}

Proposition \ref{pro1} can be rewritten as follows:

\begin{prop}\label{pro-rew}
Let $\{\alpha_1,  \dots, \alpha_n\}$ and $\{\beta_1, \dots, \beta_n\}$  be two bases of $\mathbb{F}_{q^n}$ over $\mathbb{F}_q$. For a polynomial $f(x)\in\mathbb{F}_{q^n}[x]$ and $a_i\in \mathbb{F}_q$, $1\le i\le n$, we let
$$x=(x_1+a_1, \ldots, x_n+a_n)(\alpha_1, \dots, \alpha_n)^T, $$
$$f(x)=(f_1(x_1, \dots, x_n), \dots, f_n(x_1, \dots, x_n))(\beta_1, \dots, \beta_n)^T+c, $$
where $c\in\mathbb{F}_{q^n}$. Then $f(x)$ is a PP over $\mathbb{F}_{q^n}$ if and only if $(f_1(x_1, \dots, x_n), \dots, f_1(x_1, \dots, x_n))$ permutes $\mathbb{F}_q^n$. \end{prop}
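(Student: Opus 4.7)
The plan is to deduce Proposition \ref{pro-rew} directly from Proposition \ref{pro1} by a change-of-basis argument, exploiting the fact that the two bases $\{\alpha_i\}$ and $\{\beta_j\}$ are related by an invertible $\mathbb{F}_q$-linear map. Since $\{\alpha_1, \dots, \alpha_n\}$ is a basis of $\mathbb{F}_{q^n}$ over $\mathbb{F}_q$, there exists a unique invertible matrix $B \in M_n(\mathbb{F}_q)$ such that $(\beta_1, \dots, \beta_n)^T = B(\alpha_1, \dots, \alpha_n)^T$, and a unique row vector $(c_1, \dots, c_n) \in \mathbb{F}_q^n$ such that $c = (c_1, \dots, c_n)(\alpha_1, \dots, \alpha_n)^T$.

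Substituting these into the expression for $f(x)$ in the statement gives
$$f(x) = (f_1, \dots, f_n)B(\alpha_1, \dots, \alpha_n)^T + (c_1, \dots, c_n)(\alpha_1, \dots, \alpha_n)^T.$$
The next step is to absorb the additive constant into the coordinate functions: setting $(b_1, \dots, b_n) = (c_1, \dots, c_n)B^{-1}$, which is well-defined thanks to the invertibility of $B$, one obtains
$$f(x) = (f_1 + b_1, \dots, f_n + b_n)B(\alpha_1, \dots, \alpha_n)^T.$$
Together with the choice $A = I_n$ for the $x$-side substitution, this exhibits the hypothesis of Proposition \ref{pro-rew} as a special instance of the hypothesis of Proposition \ref{pro1}, so the latter applied directly yields the claimed equivalence between $f$ being a PP of $\mathbb{F}_{q^n}$ and $(f_1, \dots, f_n)$ permuting $\mathbb{F}_q^n$.

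Since the argument is essentially a bookkeeping exercise, there is no serious obstacle. The only points requiring care are the direction of the change-of-basis matrix and the placement of the transpose in $(\beta_1, \dots, \beta_n)^T = B(\alpha_1, \dots, \alpha_n)^T$, together with the verification that $B$ is genuinely invertible (which uses the basis hypothesis on \emph{both} $\{\alpha_i\}$ and $\{\beta_j\}$). With these in hand, no further computation beyond Proposition \ref{pro1} is needed.
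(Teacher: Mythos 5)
Your proposal is correct and matches the paper's intent exactly: the paper presents Proposition \ref{pro-rew} simply as a rewriting of Proposition \ref{pro1}, and the change-of-basis matrix $B$ with $(\beta_1,\dots,\beta_n)^T=B(\alpha_1,\dots,\alpha_n)^T$, $A=I_n$, and $(b_1,\dots,b_n)=(c_1,\dots,c_n)B^{-1}$ is precisely the bookkeeping that justifies that rewriting. No gap.
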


Next, we present a result for polynomials of the form $x+g(\Tr_q^{q^n}(x))$  over $\mathbb{F}_{q^n}$. This result can be viewed as a special case of \cite[Corollary 3.4]{YuanDing2011} and  can undoubtedly be derived from the  AGW criterion. However, we provide an alternative proof below  based on the Proposition \ref{pro-rew} above.  Note that the {\em trace function}
and {\em norm function} from $\mathbb{F}_{q^n}$ to $\mathbb{F}_{q}$ are defined respectively by
$$\Tr_{q}^{q^n}(x)= x+x^q+\cdots+x^{q^{n-1}},\quad\N_{q}^{q^n}(x)=x^{(q^n-1)/(q-1)}.$$
In particular, when $q=p$ is a prime, we call $\Tr_{q}^{q^n}(\cdot)$ the {\em absolute trace function}.

\begin{example}
Let $q$ be a prime power and $g(x)=\sum_{i=1}^{q-1}a_ix^i\in\mathbb{F}_{q^n}[x]$. Then the mapping $f(x)=x+g(\Tr_{q}^{q^n}(x))$ permutes $\mathbb{F}_{q^n}$ if and only if $h(x)=x+\sum_{i=1}^{q-1}\Tr_{q}^{q^n}(a_i)x^i$ permutes $\mathbb{F}_q$.
\end{example}
\begin{proof}
Let $\{\alpha_1,\dots,\alpha_n\}$ and $\{1,\beta_2,\dots,\beta_n\}$ be a dual pair of ordered bases of $\mathbb{F}_{q^n}$ over $\mathbb{F}_q$. Then for any $x\in\mathbb{F}_{q^n}$, we have
$$x=(x_1,\dots,x_n)(\alpha_1,\dots,\alpha_n)^T,~~{\rm with}~~ x_i\in\mathbb{F}_q, ~{\rm for} ~1\leq i\leq n.$$  Now assume that $a_j=(a_{j1},\dots,a_{jn})(\alpha_1,\dots,\alpha_n)^T$ with $a_{ji}\in\mathbb{F}_q$ for $1\leq i\leq n$ and $1\leq j\leq q-1$.
Hence
\begin{align*}
f(x)&=x+a_1\Tr_{q}^{q^n}(x)+\dots+a_{q-1}\left(\Tr_{q}^{q^n}(x)\right)^{q-1}\\
    &=\sum_{i=1}^n\alpha_ix_i+\sum_{i=1}^{n}\alpha_ia_{1i}\Tr_{q}^{q^n}(x)+\cdots+\sum_{i=1}^n\alpha_ia_{(q-1)i}\left(\Tr_{q}^{q^n}(x)\right)^{q-1}\\
    &=\left(x_1+\sum_{j=1}^{q-1}a_{j1}\left(\Tr_{q}^{q^n}(x)\right)^j\right)\alpha_1+\left(x_2+\sum_{j=1}^{q-1}a_{j2}\left(\Tr_{q}^{q^n}(x)\right)^j\right)\alpha_2+\cdots+\left(x_n+\sum_{j=1}^{q-1}a_{jn}\left(\Tr_{q}^{q^n}(x)\right)^j
   \right)\alpha_n.\\
\end{align*}
It follows from Proposition \ref{pro-rew} that $f(x)$ is a PP over $\mathbb{F}_{q^n}$ if and only if
\begin{equation*}
\left(x_1+\sum_{j=1}^{q-1}a_{j1}\left(\Tr_{q}^{q^n}(x)\right)^j,x_2+\sum_{j=1}^{q-1}a_{j2}\left(\Tr_{q}^{q^n}(x)\right)^j,\cdots,x_n+\sum_{j=1}^{q-1}a_{jn}\left(\Tr_{q}^{q^n}(x)\right)^j\right)\tag{4}
\end{equation*}
permutes $\mathbb{F}_q^n$. That is
$$\left(x_1+\sum_{j=1}^{q-1}\Tr_{q}^{q^n}(a_j)x_1^j,x_2+\sum_{j=1}^{q-1}a_{j2}x_1^j,\cdots,x_n+\sum_{j=1}^{q-1}a_{jn}x_1^j\right),$$
since a simple computation derives $\Tr_{q}^{q^n}(x)=x_1$ and $\Tr_{q}^{q^n}(a_j)=a_{j1}$,  where $1\leq j\leq q-1$.
Observe that $h(x_1)=x_1+\sum_{j=1}^{q-1}\Tr_{q}^{q^n}(a_j)x_1^j$ is a univariate polynomial in $x_1$, and each of the remaining component functions has the form $x_i+h_i(x_1)$. Consequently, the system of $(4)$ permutes $\mathbb{F}_{q}^n$ if and only if $h(x_1)$ permutes $\mathbb{F}_q$. It follows that $f(x)$ is a PP of $\mathbb{F}_{q^n}$ if and only if $h(x)$ permutes $\mathbb{F}_q$. This completes the proof.
\end{proof}

\section{PPs of the form $\sum_i(x^q-x+\delta)^{s_i}+\gamma L(x)$ over $\mathbb{F}_{q^2}$}
In this section, we construct some new classes of PPs with the form $(2)$ or $(3)$ over $\mathbb{F}_{q^2}$. As shown in Proposition \ref{pro-rew}, when $n=2$, we have

\begin{lem}\label{lem-n=2}
Let $\{\alpha_1, \alpha_2\}$ and $\{\beta_1, \beta_2\}$ be two bases of $\mathbb{F}_{q^2}$ over $\mathbb{F}_q$. For a polynomial $f(x)\in\mathbb{F}_{q^2}[x]$ and elements $a, b, c, d\in\mathbb{F}_q$, let $x=(y+a)\alpha_1+(z+b)\alpha_2$, where $y,z$ are variables over $\mathbb{F}_q$. Then we have
$$f(x)=f((y+a)\alpha_1+(z+b)\alpha_2)=g_1(y, z)\beta_1+g_2(y, z)\beta_2+ \alpha$$
 where $g_i(y, z)\in\mathbb{F}_q[x, y]$ for $i=1, 2$,
 and $\alpha\in\mathbb{F}_{q^2}$ is a constant which does not depend on $y$ and $z$. Moreover, $f(x)$ is a PP over $\mathbb{F}_{q^2}$ if and only if $(g_1(y, z),g_2(y, z))$ permutes $\mathbb{F}_q^2$.\end{lem}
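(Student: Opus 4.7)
The plan is to recognize Lemma \ref{lem-n=2} as the $n=2$ specialization of Proposition \ref{pro-rew}, and to supply the minor bookkeeping needed to pass from the matrix-style formulation there to the explicit two-variable formulation here. The argument splits into two parts: existence of the claimed decomposition, and the equivalence of the two permutation conditions.

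For existence, I would substitute $x = (y+a)\alpha_1 + (z+b)\alpha_2$ directly into $f(x) \in \mathbb{F}_{q^2}[x]$. The result is a polynomial in $\mathbb{F}_{q^2}[y,z]$. Since $\{\beta_1, \beta_2\}$ is an $\mathbb{F}_q$-basis of $\mathbb{F}_{q^2}$, each coefficient of a monomial $y^i z^j$ admits a unique decomposition $c_{ij}\beta_1 + d_{ij}\beta_2$ with $c_{ij}, d_{ij} \in \mathbb{F}_q$. Collecting monomials yields polynomials $g_1, g_2 \in \mathbb{F}_q[y,z]$ with $f(x) = g_1(y,z)\beta_1 + g_2(y,z)\beta_2$; the additive constant $\alpha$ in the statement can be taken to be zero (or absorbed into $g_1, g_2$), and is present only for notational flexibility corresponding to the constant $c$ in Proposition \ref{pro-rew}.

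For the equivalence, the key observation is that the maps $\varphi\colon \mathbb{F}_q^2 \to \mathbb{F}_{q^2}$, $(y,z) \mapsto (y+a)\alpha_1 + (z+b)\alpha_2$, and $\psi\colon \mathbb{F}_q^2 \to \mathbb{F}_{q^2}$, $(u,v) \mapsto u\beta_1 + v\beta_2 + \alpha$, are both bijections, since $\{\alpha_1, \alpha_2\}$ and $\{\beta_1, \beta_2\}$ are $\mathbb{F}_q$-bases of $\mathbb{F}_{q^2}$ and translations are invertible. By construction $f \circ \varphi = \psi \circ (g_1, g_2)$, so $f$ permutes $\mathbb{F}_{q^2}$ if and only if $(g_1, g_2)$ permutes $\mathbb{F}_q^2$. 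Equivalently, one may simply invoke Proposition \ref{pro-rew} with $n=2$, identifying $(x_1, x_2) = (y, z)$, $(a_1, a_2) = (a, b)$, and $c = \alpha$, bypassing any explicit calculation.

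There is essentially no technical obstacle: the lemma is a direct transcription of Proposition \ref{pro-rew} to the two-variable case. The only step requiring a moment of care is confirming that uniqueness of the $\{\beta_1, \beta_2\}$-decomposition of coefficients ensures $g_1$ and $g_2$ are well defined as elements of $\mathbb{F}_q[y,z]$, rather than merely as functions $\mathbb{F}_q^2 \to \mathbb{F}_q$; once this is in place, the equivalence follows immediately from composing the two bijections $\varphi$ and $\psi$. I would also silently drop the parameters $c, d$ declared in the hypothesis, as they do not appear in the conclusion and seem to be a vestige of an earlier draft.
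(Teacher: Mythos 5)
Your proposal is correct and matches the paper's treatment: the paper gives no separate proof of this lemma, presenting it simply as the $n=2$ specialization of Proposition \ref{pro-rew}, which is exactly the reduction you make (your added bookkeeping about uniqueness of the $\{\beta_1,\beta_2\}$-coordinates and the composition of the two affine bijections is the same argument that underlies Proposition \ref{pro-rew} itself). Your observation that the unused parameters $c,d$ are vestigial is also accurate.
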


\subsection{Odd characteristic}

Wu and Yuan in \cite{WuYuan2024} investigated the permutation properties of the polynomials $f(x)=(x^{3^m}-x+\delta)^{2\cdot3^m+1}+x$ over $\mathbb{F}_{3^{2m}}$, and derived the necessary and sufficient conditions for such polynomials to be permutations. We now present a more general result as follows.

\begin{thm}\label{Wuyuan}
For an odd prime power $q$, let $\gamma,\delta\in\mathbb{F}_{q^2}$ with $\gamma\neq0$. Then the polynomial
$$f(x)=(x^q-x+\delta)^{q+2}+\gamma x$$
is a PP of $\mathbb{F}_{q^2}$ if and only if one of the following holds:
\begin{enumerate}
\item[{\rm(i)}] $\gamma\in\mathbb{F}_q^*, 3\mid q$ and $\left(\Tr_{q}^{q^2}(\delta)\right)^2-\Tr_{q}^{q^2}(\gamma)$ is a square;
\item[{\rm(ii)}] $\gamma\in\mathbb{F}_q^*, q\equiv2\pmod3$ and $\left(\Tr_{q}^{q^2}(\delta)\right)^2=Tr_{q}^{q^2}(\gamma)$;

\item[{\rm(iii)}] $\gamma\in\mathbb{F}_{q^2}^*\backslash\mathbb{F}_{q}$, $\Tr_q^{q^2}(\delta)=\Tr_{q}^{q^2}(\gamma)=0$;

\item[{\rm(iv)}] $\gamma\in\mathbb{F}_{q^2}^*\backslash\mathbb{F}_{q}$, $\Tr_q^{q^2}(\delta)=0$,
    $\Tr_{q}^{q^2}(\gamma)\neq0$, $3\mid q$ and   $-\frac{\N_{q}^{q^2}(\gamma)}{\Tr_{q}^{q^2}(\gamma)}$ is a square;

\item[{\rm(v)}] $\gamma\in\mathbb{F}_{q^2}^*\backslash\mathbb{F}_{q}$,  $\Tr_q^{q^2}(\delta)\neq0$, $\Tr_{q}^{q^2}(\gamma)\neq0$, $q\equiv2\pmod3$ and
     $$\left(\Tr_{q}^{q^2}(\delta)\Tr_{q}^{q^2}(\gamma)\right)^2=\N_{q}^{q^2}(\gamma)\left(\left(\Tr_{q}^{q^2}(\delta)\right)^2+3\Tr_{q}^{q^2}(\gamma)\right).$$
 \end{enumerate}
\end{thm}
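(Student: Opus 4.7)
The plan is to apply Lemma~\ref{lem-n=2} with the basis $\{1,\omega\}$ of $\mathbb{F}_{q^2}$ over $\mathbb{F}_q$, where $\omega^2=\nu$ for a fixed non-square $\nu\in\mathbb{F}_q^*$ (so $\omega^q=-\omega$). Writing $\delta=\delta_1+\delta_2\omega$ and $\gamma=\gamma_1+\gamma_2\omega$ with $\delta_i,\gamma_i\in\mathbb{F}_q$, we have $\Tr_q^{q^2}(\delta)=2\delta_1$, $\Tr_q^{q^2}(\gamma)=2\gamma_1$ and $\N_q^{q^2}(\gamma)=\gamma_1^2-\nu\gamma_2^2$. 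Substituting $x=y+(z+\delta_2/2)\omega$ with $y,z\in\mathbb{F}_q$, a short computation gives $X:=x^q-x+\delta=\delta_1-2z\omega$, hence $X^{q+1}=\delta_1^2-4\nu z^2\in\mathbb{F}_q$, and
\[X^{q+2}=X^{q+1}\cdot X=(\delta_1^3-4\nu\delta_1 z^2)+(8\nu z^3-2\delta_1^2 z)\omega.\]
Adding $\gamma x$ and reading off the $\{1,\omega\}$-components, Lemma~\ref{lem-n=2} reduces the PP property of $f$ on $\mathbb{F}_{q^2}$ to the bijectivity on $\mathbb{F}_q^2$ of
\begin{align*}
g_1(y,z)&=\gamma_1 y+\gamma_2\nu z-4\nu\delta_1 z^2,\\
g_2(y,z)&=\gamma_2 y+(\gamma_1-2\delta_1^2)z+8\nu z^3.
\end{align*}

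If $\gamma\in\mathbb{F}_q^*$ (so $\gamma_2=0$), then $g_1$ is bijective in $y$ for every fixed $z$, and the problem reduces to asking when the univariate cubic $8\nu z^3+(\gamma_1-2\delta_1^2)z$ permutes $\mathbb{F}_q$. In characteristic~$3$ this is $\mathbb{F}_3$-linear and is a PP iff it has no nonzero root, which rewrites (using $\Tr_q^{q^2}(\delta)=2\delta_1$, $\Tr_q^{q^2}(\gamma)=2\gamma$, and $\nu$ a non-square) as~(i); in characteristic $\ne 3$, the classical fact that $Az^3+Bz$ with $A,B\in\mathbb{F}_q^*$ is never a PP of $\mathbb{F}_q$ forces $\gamma_1=2\delta_1^2$ and $q\equiv 2\pmod 3$, giving~(ii). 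If instead $\gamma\notin\mathbb{F}_q$ with $\gamma_1=0$, then $g_1$ depends only on $z$ through the quadratic $\gamma_2\nu z-4\nu\delta_1 z^2$; bijectivity forces $\delta_1=0$, after which the system is visibly bijective, which is~(iii).

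The remaining case is $\gamma_1\gamma_2\ne 0$. Since $g_1$ is linear in $y$ with coefficient $\gamma_1\ne 0$, the replacement $(g_1,g_2)\leftrightarrow(g_1,\gamma_2 g_1-\gamma_1 g_2)$ is an invertible change of coordinates, so bijectivity of $(g_1,g_2)$ becomes equivalent to $h(z):=-8\gamma_1\nu z^3-4\gamma_2\nu\delta_1 z^2+(\gamma_2^2\nu-\gamma_1^2+2\gamma_1\delta_1^2)z$ permuting $\mathbb{F}_q$. When $\delta_1=0$, $h(z)=-8\gamma_1\nu z^3-\N_q^{q^2}(\gamma)z$ is a $3$-polynomial in characteristic~$3$; the ``no nonzero root'' criterion, combined with $\nu$ being a non-square and $\gamma_1=\Tr_q^{q^2}(\gamma)/2$, rewrites as $-\N_q^{q^2}(\gamma)/\Tr_q^{q^2}(\gamma)$ being a square, i.e.~(iv), while in characteristic $\ne 3$ the same classical criterion rules $h$ out. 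When $\delta_1\ne 0$ (necessarily char $\ne 3$), a translation in $z$ kills the quadratic term of $h$ and produces a depressed cubic $-8\gamma_1\nu w^3+c'w+\text{const}$; the classical criterion gives a PP iff $c'=0$ and $q\equiv 2\pmod 3$. Computing $c'$ and using $\gamma_2^2\nu=\gamma_1^2-\N_q^{q^2}(\gamma)$ to clear denominators produces exactly the identity in~(v).

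The principal technical hurdle is the repeated invocation of the classical criterion ``$Az^3+Bz$ with $A,B\in\mathbb{F}_q^*$ and char $\ne 3$ is never a PP of $\mathbb{F}_q$'', which I would derive from the factorization $(Az^3+Bz)-(Aw^3+Bw)=(z-w)[A(z^2+zw+w^2)+B]$ and a counting argument on the form $z^2+zw+w^2$ (anisotropic for $q\equiv 2\pmod 3$, isotropic for $q\equiv 1\pmod 3$): each nonzero level set has at least $q-1$ points, while its intersection with the diagonal $z=w$ has at most $2$, forcing off-diagonal collisions for all $q\ge 5$. The remaining work is routine bookkeeping---tracking the translation $z\mapsto z-\delta_2/2$ correctly, and converting basis-dependent expressions in $\gamma_1,\gamma_2,\delta_1,\nu$ into the basis-free invariants $\Tr_q^{q^2}(\delta)$, $\Tr_q^{q^2}(\gamma)$, $\N_q^{q^2}(\gamma)$.
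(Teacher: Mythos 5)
Your reduction is essentially the paper's own: the same basis $\{1,\omega\}$ with $\omega^2$ a fixed non\nobreakdash-square, the same normalization of $x$ so that $x^q-x+\delta$ collapses to $\delta_1-2z\omega$, the same appeal to Lemma~\ref{lem-n=2}, and the same triangularization $(g_1,g_2)\mapsto(g_1,\gamma_2 g_1-\gamma_1 g_2)$ reducing everything to a univariate cubic in $z$. The only methodological difference is that you re-derive the classification of cubic permutation binomials from the factorization of $h(z)-h(w)$ and a count of points on the conic $z^2+zw+w^2=c$, instead of citing Table~7.1 of Lidl--Niederreiter; that is a perfectly sound, self-contained substitute. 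Your components $g_1,g_2$, and the conversions to $\Tr_q^{q^2}$ and $\N_q^{q^2}$, check out in cases (i)--(iv) and in the characteristic-$\ne3$ part of case (v).

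There is, however, one concrete gap in the final case. When $\gamma_1\gamma_2\ne0$ and $\delta_1\ne0$ you write ``(necessarily char $\ne 3$)''; this is false --- nothing prevents $\Tr_q^{q^2}(\delta)\ne0$ when $3\mid q$ --- and the step you then perform (translating $z$ to depress the cubic $h$) is precisely the step that breaks down in characteristic $3$, since it requires dividing by $3$. Consequently the sub-case $3\mid q$, $\gamma\in\mathbb{F}_{q^2}^*\setminus\mathbb{F}_q$, $\Tr_q^{q^2}(\gamma)\ne0$, $\Tr_q^{q^2}(\delta)\ne0$ is never shown to give a non-permutation, and this must be shown because neither (iv) nor (v) covers it. The repair is one line using your own factorization: in characteristic $3$,
$h(z)-h(w)=(z-w)\bigl[A(z-w)^2+\alpha(z+w)+\beta\bigr]$ with $A=-8\gamma_1\nu$ and $\alpha=-4\gamma_2\nu\delta_1\ne0$, and for any fixed $s=z-w\ne0$ one solves $As^2+\alpha t+\beta=0$ for $t=z+w$, producing an off-diagonal collision; hence a cubic over $\mathbb{F}_{3^m}$ with nonzero quadratic coefficient is never a permutation (which is also why Table~7.1 has no such entry). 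With that sentence added your argument is complete. (Minor typo: ``$\Tr_q^{q^2}(\gamma)=2\gamma$'' should read $2\gamma_1$.)
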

\begin{proof}
  Since $q$ is an odd prime power,  there is a non-square element $u\in\mathbb{F}_{q}$. Let $\alpha$ be an element of $\mathbb{F}_{q^2}$ with $\alpha^2=u$. Then $\{1, \alpha\}$ forms a basis of $\mathbb{F}_{q^2}$ over $\mathbb{F}_q$. Let $\delta=a+b\alpha$, $\gamma=c+d\alpha$ and $x=y-(z-b)\alpha/2$, where $a,b,c,d,y,z\in\mathbb{F}_q$. Then $x^q-x+\delta=a+z\alpha$. We have
  \begin{align*}
f(x)=f(y-(z-b)\alpha/2)&=\left(a+z\alpha\right)^{q+2}+(c+d\alpha)(y-(z-b)\alpha/2)\\
  &=\left(a-z\alpha\right)\left(a+z\alpha\right)^{2}+(c+d\alpha)(y-(z-b)\alpha/2)\\
  &=cy-uaz^2-\frac{ud}{2}z+a^3+\frac{ubd}{2}+\left(dy-uz^3+\left(a^2-\frac{c}{2}\right)z+\frac{bc}{2}\right)\alpha.
\end{align*}
Hence
\begin{equation*}
\begin{cases}
g_1(y,z)=cy-uaz^2-\frac{ud}{2}z,\\
g_2(y,z)=dy-uz^3+\left(a^2-\frac{c}{2}\right)z.
\end{cases}
\end{equation*}
By Lemma \ref{lem-n=2}, it suffices to analyze the permutation behavior of $(g_1(y,z),g_2(y,z))$. We therefore distinguish three cases, as $\gamma \neq 0$.

Case 1: Assume $c\neq0$ and $d=0$. Observe that $g_2(y,z)$ is a polynomials of $z$ and the $y$-part of $g_1(y,z)$ is $cy$. Therefore $(g_1(y,z),g_2(y,z))$ permutes $\mathbb{F}_q^2$ if and only if $g_2(y,z)=g_2(z)$ permutes $\mathbb{F}_q$. In this case, the normalized form of $g_2(z)$ is
$$z^3-\frac{2a^2-c}{2u}z.$$
By consulting Table 7.1 in \cite{LN97}, the polynomial $g_2(z)$ is a PP over $\mathbb{F}_q$ if and only if $3\mid q$ and $\frac{2a^2-c}{2u}$ is a non-square, that is, $\frac{2a^2-c}{2}$ is a square, or $q\equiv2\pmod3$ and  $2a^2-c=0$.

Case 2: Assume $c=0$ and $d\neq0$. Note that $g_1(y,z)=g_1(z)$ becomes a polynomial in $z$ alone, and the $y$-part of $g_2(y,z)$ is $dy$. Clearly,  $(g_1(y,z),g_2(y,z))$ permutes $\mathbb{F}_q^2$ if and only if $g_1(z)=-uaz^2-\frac{ud}{2}z$ permutes $\mathbb{F}_q$, which is equivalent to $a=0$.

Case 3: Assume $cd\neq0$. Observe that the $y$-part of $g_1(y,z)$ and $g_2(y,z)$ are $cy$ and $dy$, respectively. Then $(g_1(y,z),g_2(y,z))$ permutes $\mathbb{F}_q^2$ if and only if
 \begin{align*}
  (cu)^{-1}h(z)&=(cu)^{-1}\left(d\cdot g_1(y,z)-c\cdot g_2(y,z)\right)\\
    &=(cu)^{-1}\left(cuz^3-aduz^2-\left(\frac{d^2u}{2}+a^2c-\frac{c^2}{2u}\right)z\right)\\
    &=z^3-\frac{ad}{c}z^2-(\frac{d^2}{2c}+\frac{a^2}{u}-\frac{c}{2u})z
 \end{align*}
 permutes $\mathbb{F}_q$ since $cu^{-1}\neq0$. By Table 7.1 in \cite{LN97}  $(cu)^{-1}h(z)$ permutes $\mathbb{F}_{q}$ only in the following two cases. If $3\mid q$, then  $(cu^{-1})h(z)$ permutes $\mathbb{F}_{q}$  if and only if $a=0$ and $\frac{d^2u-c^2}{2c}$ is a square in $\mathbb{F}_q$. If $3\nmid q$, the normalized form of $ (cu^{-1})h(z)$ becomes
 $$z^3-\left(\frac{a^2d^2}{3c^2}+\frac{d^2}{2c}+\frac{a^2}{u}-\frac{c}{2u}\right)z.$$
  In this case, $(cu)^{-1}h(z)$ permutes $\mathbb{F}_{q}$ if and only  if $2ua^2d^2=3c(c^2-d^2u-2a^2c)$ and $q\equiv2\pmod3$, where the equation is a simplified form of $$\frac{a^2d^2}{3c^2}+\frac{d^2}{2c}+\frac{a^2}{u}-\frac{c}{2u}=0.$$

Finally, a direct computation yields $2a=\Tr_{q}^{q^2}(\delta)$, $2c=\Tr_{q}^{q^2}(\gamma)$ and $d^2=\frac{c^2-\N_{q}^{q^2}(\gamma)}{u}$. With these relations, the proof is completed.
\end{proof}

In \cite[Proposition 1]{LiWLZ18}, Li et al. completely characterized PPs of the form $(x^q-x+\delta)^{2q}+x$ over $\mathbb{F}_{q^2}$. Following this work,  Xu, Luo and Cao presented an analogous result for $(x^q-x+\delta)^{2}+x$ over $\mathbb{F}_{q^2}$. We now extend these results by providing a clear characterization for the more general form $(x^q-x+\delta)^{s}+\gamma x$ over $\mathbb{F}_{q^2}$, where $s\in\{2,2q\}$. The proofs follow a similar line to those in Theorem \ref{Wuyuan}, and we omit the details here.

\begin{thm}\label{Li_2}
For an odd prime power $q$, let $\gamma,\delta\in\mathbb{F}_{q^2}$ with $\gamma\neq0$. Then the polynomial
$$f(x)=(x^q-x+\delta)^{2}+\gamma x$$
is a PP of $\mathbb{F}_{q^2}$ if and only if $\gamma\in\mathbb{F}_q^*$ and $\Tr_{q}^{q^2}(\delta)-\Tr_{q}^{q^2}(\gamma)/4\neq0$.
 \end{thm}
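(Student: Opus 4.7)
The plan is to mirror the proof of Theorem \ref{Wuyuan} and reduce everything to Lemma \ref{lem-n=2}. Fix a non-square $u\in\mathbb{F}_q$, choose $\alpha\in\mathbb{F}_{q^2}$ with $\alpha^2=u$, so $\{1,\alpha\}$ is an $\mathbb{F}_q$-basis of $\mathbb{F}_{q^2}$. Write $\delta=a+b\alpha$, $\gamma=c+d\alpha$ with $a,b,c,d\in\mathbb{F}_q$, and perform the same substitution $x=y-(z-b)\alpha/2$, which collapses $x^q-x+\delta$ to $a+z\alpha$. Since $(a+z\alpha)^2=a^2+uz^2+2az\alpha$, expanding $f(x)=(a+z\alpha)^2+\gamma x$ and collecting the $1$- and $\alpha$-components yields (up to additive constants, which do not affect the permutation property)
$$g_1(y,z)=cy+uz^2-\tfrac{du}{2}z,\qquad g_2(y,z)=dy+\bigl(2a-\tfrac{c}{2}\bigr)z.$$

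The key simplification compared with Theorem \ref{Wuyuan} is that squaring $a+z\alpha$ introduces \emph{no} cubic term, so each $g_i$ is at most quadratic in $z$ and linear in $y$. By Lemma \ref{lem-n=2}, $f$ is a PP of $\mathbb{F}_{q^2}$ iff $(g_1,g_2)$ permutes $\mathbb{F}_q^2$. Since $\gamma\neq 0$ excludes $c=d=0$, I split into three cases according to which of $c,d$ vanish.

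If $d=0$ and $c\neq 0$, then $g_2$ depends only on $z$ and the $y$-coefficient of $g_1$ is $c\neq 0$; the same triangular argument as in Case 1 of Theorem \ref{Wuyuan} shows that $(g_1,g_2)$ permutes $\mathbb{F}_q^2$ iff $z\mapsto(2a-c/2)z$ permutes $\mathbb{F}_q$, i.e.\ $2a-c/2\neq 0$. If $c=0$ and $d\neq 0$, then $g_1(y,z)=uz^2-(du/2)z$ depends only on $z$ and is a genuine quadratic; in odd characteristic a non-degenerate quadratic cannot permute $\mathbb{F}_q$, so the whole map fails to be a bijection. If $cd\neq 0$, I eliminate $y$ exactly as in Case 3 of Theorem \ref{Wuyuan} by forming the $\mathbb{F}_q$-linear combination $h(z)=d\,g_1(y,z)-c\,g_2(y,z)=du\,z^2+(\text{linear in }z)$; permutation of $(g_1,g_2)$ is then equivalent to permutation of $h$, which fails since $du\neq 0$ makes $h$ a true quadratic.

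Only the first case produces permutations, giving $\gamma=c\in\mathbb{F}_q^*$ together with $2a-c/2\neq 0$. Translating via $2a=\Tr_q^{q^2}(\delta)$ and $2c=\Tr_q^{q^2}(\gamma)$ yields the stated criterion $\Tr_q^{q^2}(\delta)-\Tr_q^{q^2}(\gamma)/4\neq 0$. There is no serious obstacle in the argument; the only point that deserves a line of justification is the ``no non-degenerate quadratic permutes $\mathbb{F}_q$ in odd characteristic'' step in the last two cases, which is what replaces the Dickson-polynomial table lookup that appeared in Theorem \ref{Wuyuan}. All remaining work is routine bookkeeping.
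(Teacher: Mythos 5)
Your proof is correct and follows exactly the route the paper intends: the paper omits the proof of this theorem, stating only that it ``follows a similar line to those in Theorem \ref{Wuyuan}'', and your computation of $g_1(y,z)=cy+uz^2-\tfrac{du}{2}z$, $g_2(y,z)=dy+(2a-\tfrac{c}{2})z$, the three-case analysis on $(c,d)$, and the translation $2a=\Tr_q^{q^2}(\delta)$, $2c=\Tr_q^{q^2}(\gamma)$ are precisely the omitted details. The observation that a genuine quadratic $Az^2+Bz$ with $A\neq 0$ never permutes $\mathbb{F}_q$ in odd characteristic correctly disposes of the last two cases.
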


\begin{thm}\label{Li_2q}
For an odd prime power $q$, let $\gamma,\delta\in\mathbb{F}_{q^2}$ with $\gamma\neq0$. Then the polynomial
$$f(x)=(x^q-x+\delta)^{2q}+\gamma x$$
is a PP of $\mathbb{F}_{q^2}$ if and only if $\gamma\in\mathbb{F}_q^*$ and $\Tr_{q}^{q^2}(\delta)+\Tr_{q}^{q^2}(\gamma)/4\neq0$.
 \end{thm}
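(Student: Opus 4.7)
The plan is to follow the strategy used for Theorem \ref{Wuyuan} and Theorem \ref{Li_2}. Fix a non-square $u \in \mathbb{F}_q^*$ and choose $\alpha \in \mathbb{F}_{q^2}$ with $\alpha^2 = u$, so that $\{1, \alpha\}$ is an $\mathbb{F}_q$-basis of $\mathbb{F}_{q^2}$ and $\alpha^q = -\alpha$. Write $\delta = a + b\alpha$ and $\gamma = c + d\alpha$ with $a, b, c, d \in \mathbb{F}_q$, and introduce the substitution $x = y - (z-b)\alpha/2$ exactly as in the proof of Theorem \ref{Wuyuan}, which gives $x^q - x + \delta = a + z\alpha$. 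The single computational difference from Theorem \ref{Li_2} is that the $q$-th power precedes the squaring, which flips the sign of the $\alpha$-term:
$$(a + z\alpha)^{2q} = \bigl((a + z\alpha)^q\bigr)^2 = (a - z\alpha)^2 = a^2 + uz^2 - 2az\alpha.$$
This sign flip is exactly what will convert the minus sign in Theorem \ref{Li_2} into the plus sign asserted in Theorem \ref{Li_2q}.

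Expanding $f(x) = (a - z\alpha)^2 + (c + d\alpha)\bigl(y - (z-b)\alpha/2\bigr)$ in the basis $\{1, \alpha\}$ and discarding the additive constants (harmless by Lemma \ref{lem-n=2}), I obtain
$$g_1(y,z) = cy + uz^2 - \tfrac{ud}{2}z, \qquad g_2(y,z) = dy - \bigl(2a + \tfrac{c}{2}\bigr)z.$$
By Lemma \ref{lem-n=2}, $f$ is a PP of $\mathbb{F}_{q^2}$ iff $(g_1,g_2)$ permutes $\mathbb{F}_q^2$, so I split into three cases according to the components of $\gamma$. If $\gamma \in \mathbb{F}_q^*$ (i.e.\ $d=0$, $c\ne 0$), then $g_2$ depends only on $z$ and $g_1$ is linear in $y$ with nonzero slope $c$, so $(g_1,g_2)$ permutes iff $2a + c/2 \ne 0$. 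If $c = 0$ and $d \ne 0$, then $g_1$ is a genuine quadratic polynomial in $z$ alone, which cannot permute $\mathbb{F}_q$ for odd $q$, and this case is ruled out. If $cd \ne 0$, the invertible output substitution $(g_1,g_2) \mapsto (g_2,\,dg_1 - cg_2)$ yields a second coordinate $dg_1 - cg_2 = duz^2 + (2ac + c^2/2 - d^2u/2)z$ depending only on $z$, so permutation again forces this quadratic to permute $\mathbb{F}_q$, impossible for odd $q$. The only surviving case is therefore $\gamma \in \mathbb{F}_q^*$ together with $2a + c/2 \ne 0$.

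Finally I convert back using $\Tr_{q}^{q^2}(\delta) = 2a$ and $\Tr_{q}^{q^2}(\gamma) = 2c$, which turns the surviving condition into $\Tr_{q}^{q^2}(\delta) + \Tr_{q}^{q^2}(\gamma)/4 \ne 0$, as claimed. There is no substantive obstacle: the argument is a direct transcription of the proofs of Theorems \ref{Wuyuan} and \ref{Li_2}, and the only delicate point is tracking the sign coming from $\alpha^q = -\alpha$, which is precisely what distinguishes the conclusion from that of Theorem \ref{Li_2}.
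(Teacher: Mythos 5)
Your proof is correct and follows exactly the approach the paper intends: the paper omits the proof of Theorem \ref{Li_2q}, stating only that it "follows a similar line to" Theorem \ref{Wuyuan}, and your computation of $(g_1,g_2)$, the three-case analysis on $(c,d)$, and the translation back via $\Tr_q^{q^2}(\delta)=2a$, $\Tr_q^{q^2}(\gamma)=2c$ supply precisely the omitted details. The sign bookkeeping from $\alpha^q=-\alpha$ and the final condition $2a+c/2\neq 0$ both check out.
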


\begin{thm}\label{q+2_2q+1}
For an odd prime power $q$, let $\gamma,\delta\in\mathbb{F}_{q^2}$ with $\gamma\neq0$. Then the polynomial
$$f(x)=(x^q-x+\delta)^{q+2}+(x^q-x+\delta)^{2q+1}+\gamma x$$
is a PP of $\mathbb{F}_{q^2}$ if and only if $\gamma\in\mathbb{F}_q^*$; or $\gamma\in\mathbb{F}_{q^2}^*\backslash\mathbb{F}_q$ and $\Tr_{q}^{q^2}(\delta)=0$.
\end{thm}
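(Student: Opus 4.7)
The plan is to adapt the argument used in the proof of Theorem \ref{Wuyuan}. First I would fix a non-square $u\in\mathbb{F}_q^*$, pick $\alpha\in\mathbb{F}_{q^2}$ with $\alpha^2=u$ so that $\{1,\alpha\}$ is an $\mathbb{F}_q$-basis, write $\delta=a+b\alpha$ and $\gamma=c+d\alpha$ with $a,b,c,d\in\mathbb{F}_q$, and substitute $x=y-(z-b)\alpha/2$, which gives $x^q-x+\delta=a+z\alpha$. The decisive observation is the collapse
\[
(a+z\alpha)^{q+2}+(a+z\alpha)^{2q+1}=(a+z\alpha)(a-z\alpha)\bigl[(a+z\alpha)+(a-z\alpha)\bigr]=2a(a^{2}-uz^{2})\in\mathbb{F}_{q},
\]
so the awkward power sum reduces to a quadratic in $z$ with $\mathbb{F}_q$-coefficients. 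All the non-$\mathbb{F}_q$ behaviour of $f(x)$ is therefore carried by the term $\gamma x$, and this is the step that makes the analysis cleaner than in Theorem \ref{Wuyuan}.

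Next I would expand $\gamma x=(c+d\alpha)(y-(z-b)\alpha/2)$ and collect coefficients of $1$ and $\alpha$ to obtain $f(x)=g_1(y,z)+g_2(y,z)\alpha+\text{const}$, where, up to additive constants that do not affect the permutation property,
\[
g_1(y,z)=cy-2auz^{2}-\tfrac{du}{2}z,\qquad g_2(y,z)=dy-\tfrac{c}{2}z.
\]
By Lemma \ref{lem-n=2}, it suffices to determine when $(g_1,g_2)$ permutes $\mathbb{F}_q^2$. I would split into three cases. If $d=0$ and $c\neq 0$ (so $\gamma\in\mathbb{F}_q^*$), then $g_2$ determines $z$ uniquely and $g_1$, being linear in $y$ with nonzero coefficient, then determines $y$ uniquely, so the map always bijects and no condition on $\delta$ arises. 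If $c=0$ and $d\neq 0$, then $g_1$ depends only on $z$ and $g_2$ only on $y$, and bijectivity is equivalent to the quadratic $-2auz^{2}-\tfrac{du}{2}z$ permuting $\mathbb{F}_q$; in odd characteristic this forces $a=0$, i.e.\ $\Tr_{q}^{q^{2}}(\delta)=0$. Finally, if $cd\neq 0$, eliminating $y$ via $g_2$ (equivalently, replacing $g_1$ by the linear combination $d\,g_1-c\,g_2$) reduces the problem to requiring that
\[
h(z)=-2auz^{2}+\frac{c^{2}-d^{2}u}{2d}\,z
\]
permute $\mathbb{F}_q$. Again the $z^{2}$-coefficient must vanish, forcing $a=0$, while the linear coefficient equals $\N_{q}^{q^{2}}(\gamma)/(2d)$, which is automatically nonzero since $\gamma\neq 0$.

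Assembling the three cases gives exactly the stated dichotomy: either $\gamma\in\mathbb{F}_q^*$ with no further constraint on $\delta$, or $\gamma\in\mathbb{F}_{q^{2}}^{*}\setminus\mathbb{F}_q$ together with $\Tr_{q}^{q^{2}}(\delta)=0$. I do not foresee any serious obstacle here: the main subtlety is spotting the power-sum collapse in the first step, after which the argument reduces to the elementary fact that a quadratic $Az^{2}+Bz$ permutes $\mathbb{F}_q$ in odd characteristic iff $A=0$ and $B\neq 0$. This is noticeably simpler than the cubic normal-form analysis (and the split into whether $3\mid q$) needed in Theorem \ref{Wuyuan}.
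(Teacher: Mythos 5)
Your proposal is correct and follows essentially the same route as the paper: the same basis $\{1,\alpha\}$ with $\alpha^2=u$ a non-square, the same substitution $x=y-(z-b)\alpha/2$, the same collapse $(a+z\alpha)^{q+2}+(a+z\alpha)^{2q+1}=2a(a^2-uz^2)\in\mathbb{F}_q$, the same pair $(g_1,g_2)$, and the same three-case analysis reducing to a quadratic $Az^2+Bz$ permuting $\mathbb{F}_q$. The only (welcome) refinement is that you make explicit, via $c^2-d^2u=\N_q^{q^2}(\gamma)\neq0$, why the linear coefficient cannot vanish in the case $cd\neq0$, a point the paper leaves as ``straightforward.''
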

\begin{proof}
Choose a quadratic non-residue $u\in\mathbb{F}_q$ and let $\alpha^2=u$. Then $\{1,\alpha\}$ is a basis of $\mathbb{F}_{q^2}$ over $\mathbb{F}_q$. Let $\delta=a+b\alpha$, $\gamma=c+d\alpha$ and $x=y-(z-b)\alpha/2$, where $a,b,c,d,x,y\in\mathbb{F}_q$. A simple computation shows that $2a=\Tr_q^{q^2}(\delta)$ and $x^q-x+\delta=a+z\alpha$. Therefore we have
\begin{align*}
f(x)=f(y-(z-b)\alpha/2)&=(a+z\alpha)^{q+2}+(a+z\alpha)^{2q+1}+(c+d\alpha)(y-(z-b)\alpha/2)\\
  &=(a-z\alpha)(a+z\alpha)^2+(a-z\alpha)^2(a+z\alpha)+(c+d\alpha)(y-(z-b)\alpha/2)\\
  &=2a(a^2-z^2u)+(c+d\alpha)(y-(z-b)\alpha/2)\\
  &=cy-2auz^2-\frac{ud}{2}z+2a^3+\frac{ubd}{2}+\left(dy-\frac{c}{2}z+\frac{bc}{2}\right)\alpha.
\end{align*}
Hence
\begin{equation*}
\begin{cases}
g_1(y,z)=cy-2auz^2-\frac{ud}{2}z,\\
g_2(y,z)=dy-\frac{c}{2}z.
\end{cases}
\end{equation*}
The task now reduces to studying the permutation properties of $(g_1(y,z),g_2(y,z))$. Note that $c$ and $d$ cannot both be zero since $\gamma\neq0$.
Suppose first that $c\neq0$ and $d=0$. Then $g_2(y,z)=-\frac{c}{2}z$, which is clearly a PP over $\mathbb{F}_q$. Thus in this case, $(g_1(y,z),g_2(y,z))$ must be a permutation over $\mathbb{F}_{q}^2$. Similarly, assume that $c=0$ and $d\neq0$, we have $g_1(y,z)=-2auz^2-\frac{ud}{2}z$, which permutes $\mathbb{F}_q$ if and only if $a=0$. So $(g_1(y,z),g_2(y,z))$ is a permutation of $\mathbb{F}_{q}^2$ if and only if $a=0$. Finally suppose that $cd\neq0$. Consider $h(z)=dg_1(y,z)-cg_2(y,z)=-2aduz^2+(\frac{c^2}{2}-\frac{d^2u}{2})z$. It is straightforward to see that $h(z)$ permutes $\mathbb{F}_q$ if and only if $a=0$. It follows that $(g_1(y,z),g_2(y,z))$ permutes $\mathbb{F}_{q}^2$ if and only if $a=0$. In conclusion, we complete the proof.
\end{proof}

\begin{thm}\label{degree5}
Let $q$ be an odd prime power, and let $\gamma,\delta\in\mathbb{F}_{q^2}$ with $\gamma\neq0$. Then the polynomial
$$f(x)=(x^q-x+\delta)^{q+4}+(x^q-x+\delta)^{5}+\gamma x$$
is a PP of $\mathbb{F}_{q^2}$ if and only if  one of the following holds:
\begin{enumerate}
\item[{\rm(i)}] $\Tr_q^{q^2}(\delta)=0$;
\item[{\rm(ii)}] $\Tr_q^{q^2}(\delta)\neq0$, $\gamma\in\mathbb{F}_{q}^*$,  $3\mid q$ and $\Tr_{q}^{q^2}(\gamma)/2-\left(\Tr_{q}^{q^2}(\delta)\right)^4$ is a square;
\item[{\rm(iii)}] $\Tr_q^{q^2}(\delta)\neq0$, $\gamma\in\mathbb{F}_{q}^*$,  $q\equiv2\pmod3$ and $2\left(\Tr_{q}^{q^2}(\delta)\right)^4=\Tr_{q}^{q^2}(\gamma)$.
\end{enumerate}
\end{thm}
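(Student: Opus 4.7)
My strategy mirrors the proofs of Theorems \ref{Wuyuan} and \ref{q+2_2q+1}. Pick a nonsquare $u\in\mathbb{F}_q^*$, let $\alpha\in\mathbb{F}_{q^2}$ satisfy $\alpha^2=u$, and write $\delta=a+b\alpha$, $\gamma=c+d\alpha$ with $a,b,c,d\in\mathbb{F}_q$, together with $x=y-(z-b)\alpha/2$ for $y,z\in\mathbb{F}_q$. Then $L:=x^q-x+\delta=a+z\alpha$ and $L^q+L=2a$, which produces the decisive identity
\[
L^{q+4}+L^{5}\;=\;L^{4}(L^q+L)\;=\;2a\,L^{4}.
\]
Hence $f(x)=2aL^4+\gamma x$, and condition (i) is immediate: if $\Tr_q^{q^2}(\delta)=2a=0$, then $f(x)=\gamma x$, which is trivially a permutation of $\mathbb{F}_{q^2}$.

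Henceforth assume $a\neq 0$. A direct expansion gives $L^4=(a^4+6a^2uz^2+u^2z^4)+4az(a^2+uz^2)\alpha$, so combining $2aL^4$ with $\gamma x$ the $\{1,\alpha\}$-components of $f(x)$ become
\[
g_1(y,z)=cy+2au^2z^4+12a^3uz^2-\tfrac{du}{2}z,\qquad g_2(y,z)=dy+8a^2uz^3+\bigl(8a^4-\tfrac{c}{2}\bigr)z,
\]
up to additive constants. By Lemma \ref{lem-n=2}, $f$ is a PP iff $(g_1,g_2)$ permutes $\mathbb{F}_q^2$, and I split on $(c,d)$.

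If $d=0$ and $c\neq 0$ (so $\gamma\in\mathbb{F}_q^*$), then $g_1$ is linear in $y$ with nonzero coefficient and $g_2$ depends only on $z$; the permutation question collapses to requiring the cubic $Q(z):=8a^2uz^3+(8a^4-c/2)z$ to permute $\mathbb{F}_q$. Normalising $Q$ to $z^3+Az$ with $A=(16a^4-c)/(16a^2u)$, Table 7.1 of \cite{LN97} yields PP precisely when either $3\mid q$ and $-A$ is zero or a nonsquare, or $q\equiv 2\pmod 3$ and $A=0$. Since $16a^2u$ is a nonsquare, ``$-A$ a nonsquare'' is equivalent to ``$c-16a^4$ a nonzero square''. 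Translating through $2a=\Tr_q^{q^2}(\delta)$ and $2c=\Tr_q^{q^2}(\gamma)$ recovers precisely conditions (ii) and (iii).

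It remains to rule out $d\neq 0$ when $a\neq 0$. If $c=0$, then $g_1$ depends only on $z$ and $g_2$ is linear in $y$ with coefficient $d\neq 0$, so PP-ness reduces to whether the quartic $P(z):=2au^2z^4+12a^3uz^2-(du/2)z$ permutes $\mathbb{F}_q$; when $cd\neq 0$, eliminating $y$ via $h(z):=d\,g_1(y,z)-c\,g_2(y,z)$ leaves an analogous degree-four question for $h$. The hard step, which I expect to be the main obstacle, is to show that neither $P$ nor $h$ can be a PP when $a\neq 0$. My plan is to factor $P(z)-P(w)=(z-w)R(z,w)$ (and analogously for $h$), set $s=z+w$, and solve $R(z,w)=0$ for $p=zw$; in the $P$-case the collision discriminant becomes
\[
(z-w)^2\;=\;\Delta(s)\;=\;-s^2-\frac{12a^2}{u}+\frac{d}{2aus},
\]
so a collision with $z\neq w$ in $\mathbb{F}_q$ exists iff $\Delta(s)$ is a nonzero square for some $s\in\mathbb{F}_q^*$; after multiplication by $2aus$ this becomes the statement that the quartic $2aus\cdot(d-2aus^3-24a^3s)$ takes a nonzero-square value. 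I plan to exhibit such an $s$ by an explicit algebraic choice where possible, and to fall back on a Weil-bound character-sum estimate together with direct verification for small $q$. The analogous argument for $h$ in the case $cd\neq 0$ runs by the same scheme.
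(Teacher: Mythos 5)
Your setup, the reduction via Lemma \ref{lem-n=2}, the identity $L^{q+4}+L^5=\Tr_q^{q^2}(\delta)\,L^4$, and the resulting pair $(g_1,g_2)$ all agree with the paper, and your treatment of case (i) and of the case $d=0$, $c\neq 0$ (yielding conditions (ii) and (iii)) is correct and matches the paper's cubic analysis via Table 7.1 of \cite{LN97}.

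The genuine gap is the step you yourself flag as ``the main obstacle'': showing that when $a\neq0$ and $d\neq0$ (i.e.\ $\gamma\notin\mathbb{F}_q$), neither the quartic $P(z)=2au^2z^4+12a^3uz^2-\tfrac{du}{2}z$ nor the quartic $h(z)=d\,g_1-c\,g_2$ can permute $\mathbb{F}_q$. You do not prove this; you only outline a plan based on factoring $P(z)-P(w)$, a discriminant condition, an unspecified ``explicit algebraic choice'' of $s$, a Weil-bound estimate, and direct verification for small $q$. None of these steps is carried out, the Weil-bound route would in any case leave infinitely many small $q$ to be checked by other means, and your displayed discriminant identity is asserted rather than derived. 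This is far more machinery than the problem requires: Table 7.1 of \cite{LN97}, which you already invoke for the cubic, is a \emph{complete} list of normalized permutation polynomials of degree at most $5$, and for odd $q$ its only degree-$4$ entries are $x^4\pm 3x$ over $\mathbb{F}_7$. The normalized forms of your $P$ and $h$ are $z^4+\frac{6a^2}{u}z^2-\frac{d}{4au}z$ and $z^4+\frac{6a^2(d^2u-c^2)}{d^2u^2}z^2+(\cdots)z$; since $a\neq 0$, $d^2u-c^2\neq0$ (as $u$ is a nonsquare), and $3\nmid 7$, neither can equal $x^4\pm3x$ over $\mathbb{F}_7$, and for all other odd $q$ no normalized quartic is a PP at all. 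This is exactly how the paper disposes of these two subcases in one line each; you should replace your sketched collision argument with this appeal to the classification.
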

 \begin{proof}
Choose a quadratic non-residue $u\in\mathbb{F}_q$ and let $\alpha^2=u$. Then $\{1,\alpha\}$ is a basis of $\mathbb{F}_{q^2}$ over $\mathbb{F}_q$. Let $\delta=a+b\alpha$, $\gamma=c+d\alpha$ and $x=y-(z-b)\alpha/2$, where $a, b, c, d,x,y \in\mathbb{F}_q$. We have
 $x^q-x+\delta=a+z\alpha$ and
\begin{align*}
f(y-(z-b)\alpha/2)&=(a+z\alpha)^{q+4}+(a+z\alpha)^{5}+(c+d\alpha)(y-(z-b)\alpha/2)\\
  &=2a(a+z\alpha)^4+(c+d\alpha)(y-(z-b)\alpha/2)\\
  &=cy+2au^2z^4+12a^3uz^2-\frac{ud}{2}z+2a^5+\frac{ubd}{2}+\left(dy+8a^2uz^3+\left(8a^4-\frac{c}{2}\right)z+\frac{bc}{2}\right)\alpha.
\end{align*}
Hence
 \begin{equation*}
\begin{cases}
g_1(y,z)=cy+2au^2z^4+12a^3uz^2-\frac{ud}{2}z,\\
g_2(y,z)=dy+8a^2uz^3+\left(8a^4-\frac{c}{2}\right)z.
\end{cases}
\end{equation*}
According to Lemma \ref{lem-n=2}, it suffices to study the permutation behavior of $(g_1(y,z),g_2(y,z))$. If $a=0$ (i.e., $\Tr_q^{q^2}(\delta)=0$), it is easy to verify that $(g_1(y,z),g_2(y,z))=(cy-\frac{ud}{2}z,dy-\frac{c}{2}z)$ is a permutation over $\mathbb{F}_q^2$. In the following, we focus on the case where $a\neq0$. Since $\gamma\neq0$, it follows that  $c$ and $d$ cannot both be zero. We therefore distinguish three cases in the following analysis.

Assume $c\neq0$ and $d=0$. Observe that $g_2(y,z)=g_2(z)$ is a polynomial of $z$ and the $y$-part of $g_1(y,z)$ is $cy$. Therefore $(g_1(y,z),g_2(y,z))$ permutes $\mathbb{F}_q^2$ if and only if $g_2(z)$ is a PP of $\mathbb{F}_q$. Since $u$ is a quadratic non-residue, the normalized form of $g_2(z)$ is
$$z^3-\left(\frac{c}{16a^2u}-\frac{a^2}{u}\right)z.$$
By Table 7.1 in \cite{LN97}, it follows that $g_2(z)$ is a PP over $\mathbb{F}_q$ if and only if $3\mid q$ and $\frac{c}{16a^2u}-\frac{a^2}{u}$ is a non-square in $\mathbb{F}_q$, that is, $c-16a^4$ is a square in $\mathbb{F}_q$, or $q\equiv2\pmod3$ and $c-16a^4=0$.

Assume $c=0$ and $d\neq0$. Observe that $g_1(y,z)=g_1(z)$ is a polynomial of $z$ and the $y$-part of $g_2(y,z)$ is $dy$. Thus $(g_1(y,z),g_2(y,z))$ permutes $\mathbb{F}_q^2$ if and only if $g_1(z)$ is a PP of $\mathbb{F}_q$. However, by Table 7.1 in \cite{LN97}, $g_1(z)$ cannot be a PP over $\mathbb{F}_q$, since it is impossible to eliminate quadratic term and leave only the quartic and linear terms in its expression.


 Assume $cd\neq0$. Observe that the $y$-part of $g_1(y,z)$ and $g_2(y,z)$ are $cy$ and $dy$, respectively. Thus we have $(g_1(y,z),g_2(y,z))$ permutes $\mathbb{F}_q^2$ if and only if
\begin{align*}
(2adu^2)^{-1}h(z)&=(2adu^2)^{-1}(d\cdot g_1(y,z)-c\cdot g_2(y,z))\\
   &=(2adu^2)^{-1}\left(2adu^2z^4-8a^2cuz^3+12a^3duz^2+\frac{c^2-d^2u-16a^4c}{2}z\right)\\
   &=z^4-\frac{4ac}{du}z^3+\frac{6a^2}{u}z^2+\frac{c^2-d^2u-16a^4c}{4adu^2}z
\end{align*}
permutes $\mathbb{F}_q$. By Table 7.1 in \cite{LN97}, it suffices to consider the  normalized form of $(2adu^2)^{-1}h(z)$ as follows
$$z^4+\frac{6a^2(d^2u-c^2)}{d^2u^2}z^2+\left(\frac{c^2-d^2u-16a^4c}{4adu^2}-\frac{a^3c^3}{d^3u^3}+\frac{12a^3c}{du^2}\right)z.$$
Since $q$ is odd, $a\neq0$ and $d^2u-c^2\neq0$,  we have that $h(z)$ cannot be a PP over $\mathbb{F}_{q}$.

Finally, substituting the relations  $2a=\Tr_{q}^{q^2}(\delta)$, $2c=\Tr_{q}^{q^2}(\gamma)$ and $d^2=\frac{c^2-\N_{q}^{q^2}(\gamma)}{u}$ into the preceding deductions, we conclude the proof.
 \end{proof}

{\bf Remark:} Among the five classes of PPs presented in Theorems \ref{lem-n=2}-\ref{degree5}, we specifically investigate those with the parameter $\gamma$ belonging to $\mathbb{F}_{q^2}^*$.  To the best of our knowledge, no previous work has analyzed these cases relying solely on the AGW criterion. This is primarily because the commutative diagram required by the AGW criterion breaks down when $\gamma \in \mathbb{F}_{q^2}$. 

\medskip

In 2023, Li and Cao provided a complete characterization for the PPs having the form $(x^{2^m}-x+\delta)^{s_1}+(x^{2^m}-x+\delta)^{s_2}+x$ over $\mathbb{F}_{2^{2m}}$, where $s_1\in\{2^{m+1}+1,2^m+2\}$ and $s_2\in\{3\cdot2^m+2,2^{m+1}+3\}$ (see \cite[Proposition 5]{Li-Cao2023}).
We now consider such polynomials in finite fields of odd characteristic. For brevity, only one of these cases is presented below, as the results for the others are analogous. From now on, we primarily focus on  $\gamma\in\mathbb{F}_q^*$. The cases $\gamma\in\mathbb{F}_{q^2}^*\backslash\mathbb{F}_q$ can be handled using similar arguments as previously.

\begin{thm}\label{Licao2023th}
Let $q$ be an odd prime power, $\delta\in\mathbb{F}_{q^2}$ and $\gamma\in\mathbb{F}_q^*$. Then given that $2a=\Tr_{q}^{q^2}(\delta)$, the polynomial
$$f(x)=(x^q-x+\delta)^{2q+1}+(x^q-x+\delta)^{3q+2}+\gamma x$$
is a PP of $\mathbb{F}_{q^2}$ if and only if one of the following holds:
\begin{enumerate}
\item[\rm{(i)}] $q=9$, $a=\pm1$ and $\gamma=1$;
\item[\rm{(ii)}]$q=13$ and $(a,\gamma)\in\{(0,6),(\pm1,11),(\pm2,4),(\pm5,6)\}$;
\item[{\rm(iii)}] $q\not\equiv1\pmod5$ and $a^2=-\gamma=-1/2$;
  \item[\rm{(iv)}]  $q\equiv\pm2\pmod5$ and $2a^4+2a^2+5\gamma=2$;
 \item[\rm{(v)}] $5\mid q$ and  one of the following occurs:
       (a) $a^2=-1/2$ and $(1-2\gamma)/4$ is a fourth power or not a square in $\mathbb{F}_q$;
    (b) $(2a^2+1)/2$ is a square in $\mathbb{F}_q$ and $2\gamma=1$.
\end{enumerate}
\end{thm}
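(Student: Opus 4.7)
The plan is to apply the Lemma~\ref{lem-n=2} reduction used throughout this section. Fix a quadratic non-residue $u \in \mathbb{F}_q$ and $\alpha \in \mathbb{F}_{q^2}$ with $\alpha^2 = u$, so $\alpha^q = -\alpha$. Writing $\delta = a + b\alpha$ (so $2a = \Tr_q^{q^2}(\delta)$) and taking $x = y - (z-b)\alpha/2$ produces the key simplification $x^q - x + \delta = a + z\alpha$. Using $(a+z\alpha)^{q+1} = a^2 - uz^2 \in \mathbb{F}_q$, the two powers appearing in $f$ admit the compact closed forms
$$(a+z\alpha)^{2q+1} = (a^2 - uz^2)(a - z\alpha), \qquad (a+z\alpha)^{3q+2} = (a^2 - uz^2)^2(a - z\alpha),$$
so that their sum factors as $(a^2 - uz^2)\bigl(1 + (a^2 - uz^2)\bigr)(a - z\alpha)$.

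Adding $\gamma x = \gamma y - \gamma(z-b)\alpha/2$ and separating along $\{1,\alpha\}$ yields, up to additive constants in $\mathbb{F}_q$,
$$g_1(y,z) = \gamma y + au^2 z^4 - au(2a^2+1) z^2, \quad g_2(y,z) = -u^2 z^5 + u(2a^2+1) z^3 - (a^4 + a^2 + \gamma/2) z.$$
The crucial structural point is that $g_2$ is independent of $y$, while $g_1$ is affine in $y$ with nonzero coefficient $\gamma$. By Lemma~\ref{lem-n=2}, $f(x)$ is a PP of $\mathbb{F}_{q^2}$ if and only if the univariate odd quintic
$$\widetilde g(z) := z^5 - \frac{2a^2+1}{u}\,z^3 + \frac{a^4 + a^2 + \gamma/2}{u^2}\,z$$
permutes $\mathbb{F}_q$, and the theorem reduces entirely to classifying this condition.

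The classification splits on the characteristic. When $5 \nmid q$, imposing the Dickson relation $A^2 = 5B$ on the coefficients $A = -(2a^2+1)/u$, $B = (a^4 + a^2 + \gamma/2)/u^2$ rewrites $\widetilde g$ as a Dickson polynomial $D_5(z,e)$; the relation simplifies to $2a^4 + 2a^2 + 5\gamma = 2$, and the Dickson PP-criterion $\gcd(5, q^2 - 1) = 1$ becomes $q \equiv \pm 2 \pmod 5$, producing~(iv). Forcing $A = 0$ gives $a^2 = -1/2$; together with $B = 0$ this gives $\gamma = 1/2$ and $\widetilde g(z) = z^5$, a PP iff $q \not\equiv 1 \pmod 5$, yielding~(iii). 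When $5 \mid q$, the Frobenius $z \mapsto z^5$ is $\mathbb{F}_5$-linear. Under $a^2 = -1/2$ one has $A = 0$ and $\widetilde g$ becomes the additive polynomial $z^5 + B z$ with $B = (2\gamma - 1)/(4u^2)$, which is a PP iff $-B$ fails to be a fourth power in $\mathbb{F}_q^*$; since $u^{-2}$ lies in the coset of squares that are not fourth powers, an elementary coset-bookkeeping unwinds this into $(1-2\gamma)/4$ being a fourth power or a non-square, which is~(v)(a). Under $\gamma = 1/2$, the identity $a^4 + a^2 + 1/4 = (a^2+1/2)^2$ produces the factorization $\widetilde g(z) = z(z^2 - s)^2$ with $s = (a^2+1/2)/u$; in characteristic five this is a PP iff $s$ is a non-square, equivalently $(2a^2+1)/2$ is a square (since $u$ is non-square), giving~(v)(b).

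The main obstacle is the sporadic portion. For $q$ with $5 \nmid q$ and $q \equiv \pm 1 \pmod 5$, the quintic $\widetilde g$ falls outside both the Dickson and Frobenius regimes, yet isolated PPs can still occur over small fields. Pinning them down requires either invoking Dickson's classical classification of permutation quintics over $\mathbb{F}_q$, or establishing a Hasse--Weil bound on the $\mathbb{F}_q$-rational points of the curve $\bigl(\widetilde g(z_1) - \widetilde g(z_2)\bigr)/(z_1 - z_2) = 0$ to rule out sporadic PPs for all but finitely many $q$, followed by an exhaustive search isolating exactly the entries in (i) and (ii). In characteristic five, an additional non-existence argument is needed to show that no PP arises outside the two sub-cases $A = 0$ and $\gamma = 1/2$, completing (v).
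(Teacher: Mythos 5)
Your reduction is exactly the paper's: the same basis $\{1,\alpha\}$ with $\alpha^2=u$ a non-square, the same substitution $x=y-(z-b)\alpha/2$ giving $x^q-x+\delta=a+z\alpha$, the same pair $(g_1,g_2)$, and the same conclusion via Lemma~\ref{lem-n=2} that $f$ is a PP of $\mathbb{F}_{q^2}$ if and only if the normalized quintic $z^5-\frac{2a^2+1}{u}z^3+\frac{2a^4+2a^2+\gamma}{2u^2}z$ permutes $\mathbb{F}_q$. Your hand-derivations of the individual sufficient families are also sound and agree with the paper: the Dickson relation $A^2=5B$ does simplify to $2a^4+2a^2+5\gamma=2$ (case (iv)), $A=B=0$ gives $z^5$ (case (iii)), and in characteristic $5$ the additive case $z^5+Bz$ and the factorization $z(z^2-s)^2$ give (v)(a) and (v)(b) respectively.

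The gap is that the \emph{only if} direction is never actually established. The theorem asserts a complete list, so you must show that the quintic permutes $\mathbb{F}_q$ in \emph{no} other circumstance, and you must produce the explicit sporadic lists in (i) and (ii); your proposal only names two possible strategies (Dickson's classification, or Hasse--Weil plus a finite search) without executing either. The paper closes all of this in one stroke by citing Table 7.1 of \cite{LN97}, which \emph{is} Dickson's exhaustive classification of normalized permutation polynomials of degree $\le 5$: the six table entries of degree $5$ are precisely the six cases enumerated, so completeness is automatic, including the assertion that nothing else occurs when $5\mid q$. What remains after the table lookup is the concrete bookkeeping you omit: for $q=9$ one needs $u^4=2$ for every non-square $u$ (since every non-square in $\mathbb{F}_9^*$ has order $8$), which pins down $\gamma=1$ in (i); for $q=13$ one needs the table condition that $-(2a^2+1)$ be a square together with $9a^4+9a^2+6=\gamma$, and then an enumeration over the squares of $\mathbb{F}_{13}$ to extract the four pairs in (ii). A Hasse--Weil argument would be both overkill and insufficient here --- it could bound the exceptional $q$ but would not by itself yield the exact pairs $(a,\gamma)$. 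So: right reduction, right tool identified, but the classification step that carries the necessity direction and the sporadic cases is left undone.
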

\begin{proof}
Choose a quadratic non-residue $u\in\mathbb{F}_q$ and let $\alpha^2=u$. Then $\{1,\alpha\}$ is a basis of $\mathbb{F}_{q^2}$ over $\mathbb{F}_q$. Let $\delta=a+b\alpha$ and $x=y-(z-b)\alpha/2$, where $a,b,y,z\in\mathbb{F}_q$. A straightforward calculation gives that $2a=\Tr_{q}^{q^2}(\delta)$ and $x^q-x+\delta=a+z\alpha$. Therefore we have
\begin{align*}
f(x)=f(y-(z-b)\alpha/2)&=(a+z\alpha)^{2q+1}+(a+z\alpha)^{3q+2}+\gamma(y-(z-b)\alpha/2)\\
  &=(a-z\alpha)^2(a+z\alpha)+(a-z\alpha)^3(a+z\alpha)^2+\gamma(y-(z-b)\alpha/2)\\
  &=\gamma y+au^2z^4-au(2a^2+1)z^2+a^5+a^3\\
     &~~~~+\left(-u^2z^5+(2a^2+1)uz^3-(a^4+a^2+\gamma/2)z+b\gamma/2\right)\alpha.
\end{align*}
Hence
\begin{equation*}
\begin{cases}
g_1(y,z)=\gamma y+au^2z^4-au(2a^2+1)z^2,\\
g_2(y,z)=-u^2z^5+(2a^2+1)uz^3-(a^4+a^2+\gamma/2)z.
\end{cases}
\end{equation*}
From Lemma \ref{lem-n=2}, we focus on the permutation behavior of $(g_1(y,z),g_2(y,z))$. Observe that $g_2(y,z)=g_2(z)$ is a polynomial of $z$ and the $y$-part of $g_1(y,z)$ is $\gamma y$. Therefore, $(g_1(y,z),g_2(y,z))$ permutes $\mathbb{F}_q^2$ if and only if $g_2(z)$ permutes $\mathbb{F}_q$. Since $u$ is a quadratic non-residue, the expression for $g_2(z)$ can be rewritten in its normalized form:
$$z^5-\frac{2a^2+1}{u}z^3+\frac{2a^4+2a^2+\gamma}{2u^2}z.$$
By Table $7.1$ in \cite{LN97}, we assert that $g_2(z)$ is a PP of $\mathbb{F}_q$ if and only if one of the following occurs:

\begin{enumerate}
\item[(1)] $q\not\equiv1\pmod5$ and $2a^2+1=2a^4+2a^2+\gamma=0$, i.e., $a^2=-\gamma=-1/2$.
 \item[(2)]  $5\mid q$,
$2a^2+1=0$ and
$$\frac{1-2\gamma}{4u^2}$$ is not a fourth power in $\mathbb{F}_{q}$. That is $(1-2\gamma)/4$ is a fourth power or not a square in $\mathbb{F}_q$.
\item[(3)]$q=9, 2a^2+1=0$ and $$\left(\frac{2a^4+2a^2+\gamma}{4u^2}\right)^2=2.$$
That is $a=\pm 1$ and $(1+\gamma)^2=2u^4$ as Char($\mathbb{F}_q$)=3. We now determine the value of $\gamma$.
 We first claim that $u^4=2$.
Let $g$ be a generator of the multiplicative group $\mathbb{F}_9^*$. Then an element is a quadratic non-residue in $\mathbb{F}_9$ if and only if it is of the form $g^k$ with odd $k$, equivalently, $\gcd(k,8)=1$. It follows that any quadratic non-residue in $\mathbb{F}_9$ has order $8/\gcd(k,8)=8$, implying $u^4=-1$. Therefore $u^4=2$ in $\mathbb{F}_9$. Consequently, we conclude that for $q=9$, $a=\pm1$ and  $\gamma=1$ (as $\gamma\neq0$), the polynomial $g_2(z)$ is a PP of $\mathbb{F}_{q}$.

\item[(4)] $q\equiv\pm2\pmod5$ and
$$\frac{(2a^2+1)^2}{u^2}=5\cdot\frac{2a^4+2a^2+\gamma}{2u^2},$$
that is, $2a^4+2a^2+5\gamma=2$.
\item[(5)]$q=13,-(2a^2+1)$ is a square in $\mathbb{F}_q$ and
$$3\cdot\frac{(2a^2+1)^2}{u^2}=\frac{2a^4+2a^2+\gamma}{2u^2},$$
that is, $9a^4+9a^2+6=\gamma$. We next determine the values of $(a,\gamma)$. The set of square in $\mathbb{F}_{13}$ is $\{0,1,3,4,9,10,12\}$. Since $a^2$ is a square in $\mathbb{F}_{13}$, $2a^2+1$ take values in $\{1,3,7,9,6,8,12\}$. As $-(2a^2+1)\equiv12(2a^2+1)\pmod{13}$ must also be a square and $12$ is a square in $\mathbb{F}_{13}$, it follows that that $2a^2+1\in\{1,3,9,12\}$.
We now determine the corresponding values of $a$ and $\gamma$:
If $2a^2+1=1$, then $a=0$ and $\gamma=6$. If $2a^2+1=3$, then $a=\pm1$ and $\gamma=11$. If $2a^2+1=9$, then $a=\pm2$ and $\gamma=4$. If $2a^2+1=12$, then $a=\pm5 $ and $\gamma=6$. We conclude that for $q=13$, $(a,\gamma)\in\{(0,6),(\pm1,11),(\pm2,4),(\pm5,6)\}$, the polynomial $g_2(z)$ is a PP over $\mathbb{F}_{q}$. Thus $f(x)$ is a PP over $\mathbb{F}_{q^2}$.
\item[(6)]$ 5\mid q$, $(2a^2+1)/2$ is a square in $\mathbb{F}_q$ and
$$\left(\frac{2a^2+1}{2u}\right)^2=\frac{2a^4+2a^2+\gamma}{2u^2},$$
that is, $\gamma=1/2$.

\end{enumerate}
%
%
\end{proof}

With the same method as in Theorem \ref{Licao2023th}, we propose more classes of PPs with the form $(x^q-x+\delta)^{s_1}+(x^q-x+\delta)^{s_2}+\gamma x$, where $\gamma\in\mathbb{F}_q^\ast$. The detailed proofs are omitted here and provided in the appendix.

\begin{thm}\label{Appx.1}
Let $q$ be an odd prime power, $\delta\in\mathbb{F}_{q^2}$ and $\gamma\in\mathbb{F}_q^*$. Then the polynomial
$$f(x) = (x^q-x+\delta)^{2q+3}+(x^q-x+\delta)^{2q}+\gamma x$$
is a PP of $\mathbb{F}_{q^2}$ if and only if  one of the following holds:
\begin{enumerate}
\item[{\rm(i)}] $5\mid q$, $\Tr_{q}^{q^2}(\delta)=0$ and $\gamma/2$ is a fourth power or not a square in $\mathbb{F}_q$;
\item[{\rm(ii)}] $q=9$, $\Tr_{q}^{q^2}(\delta)=0$ and $\gamma=\pm1$;
\item[{\rm(iii)}] $q\equiv\pm2\pmod{5}$, $\Tr_{q}^{q^2}(\delta)\ne0$, and
    $\left(\Tr_{q}^{q^2}(\delta)\right)^4-80\Tr_{q}^{q^2}(\delta)-40\gamma=0$;

\item[{\rm(iv)}]$5\mid q$, $\Tr_{q}^{q^2}(\delta)\ne0$ and
$\gamma+2\Tr_{q}^{q^2}(\delta)=0$.
\end{enumerate}
\end{thm}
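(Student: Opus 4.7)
The plan is to mirror the blueprint of Theorems \ref{Wuyuan}--\ref{Licao2023th}. First I would fix a quadratic non-residue $u\in\mathbb{F}_q^*$ and $\alpha$ with $\alpha^2=u$, so that $\{1,\alpha\}$ is an $\mathbb{F}_q$-basis of $\mathbb{F}_{q^2}$, then write $\delta=a+b\alpha$ (so $2a=\Tr_q^{q^2}(\delta)$) and change variables via $x=y-(z-b)\alpha/2$ with $y,z\in\mathbb{F}_q$. Under this substitution $x^q-x+\delta=a+z\alpha$ and its $q$-power is $a-z\alpha$, so the two high-power summands of $f$ become $(a-z\alpha)^2(a+z\alpha)^3$ and $(a-z\alpha)^2$; a direct expansion in $\{1,\alpha\}$ yields $f(x)=g_1(y,z)+g_2(y,z)\alpha+\mathrm{const}$ with
$$g_1(y,z)=\gamma y+au^2z^4+u(1-2a^3)z^2,\qquad g_2(y,z)=u^2z^5-2a^2uz^3+(a^4-2a-\gamma/2)z.$$

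Because $g_2$ depends only on $z$ and the $y$-part of $g_1$ is $\gamma y$ with $\gamma\ne 0$, Lemma \ref{lem-n=2} reduces the question to whether $g_2(z)$ permutes $\mathbb{F}_q$. Dividing by $u^2$ puts $g_2$ into the standard normalized form $z^5+Az^3+Bz$ with $A=-2a^2/u$ and $B=(a^4-2a-\gamma/2)/u^2$, and the strategy is then to match this against each degree-$5$ entry of Table 7.1 in \cite{LN97}.

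The case analysis splits on $a$. If $a=0$ then $A=0$ and $g_2/u^2=z^5+Bz$: only the characteristic-$5$ linearized entry (requiring $-B$ not a nonzero fourth power) and the exceptional $q=9$ entry ($B^2=2$) can apply, and---accounting for the factor $u^{-2}$ via the coset structure of fourth powers in $\mathbb{F}_{5^k}^*$ (where $-1$ is always a square, so $u^2$ lies in the squares but not in the fourth powers)---these collapse to conditions (i) and (ii) respectively. If $a\ne 0$, the Dickson condition $A^2=5B$ simplifies to $2a^4-20a-5\gamma=0$, which is precisely the equation in (iii) and yields a PP exactly when $q\equiv\pm 2\pmod 5$ (in characteristic $5$ it degenerates to $a=0$ and is swallowed by case (i)); meanwhile the characteristic-$5$ entry $B=A^2/4=-A^2$, corresponding to $z(z^2+A/2)^2$ and valid when $-A/2$ is a non-square, forces $\gamma=-4a$, i.e.\ $\gamma+2\Tr_q^{q^2}(\delta)=0$, giving (iv); the squareness condition holds automatically, since $-A/2=a^2/u$ is a nonzero square over a non-square. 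The \emph{main obstacle} is to dispose of the $q=13$ Table 7.1 entry $B=3A^2$, which requires $A$ to be a non-square and would otherwise contribute additional permutations: a one-line parity check handles it, since in $\mathbb{F}_{13}$ both $-2$ and $u$ are non-squares, forcing $A=-2a^2/u$ to be a \emph{square}, so this entry cannot apply; therefore cases (i)--(iv) are exhaustive.
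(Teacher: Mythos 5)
Your proposal follows essentially the same route as the paper's appendix proof: the same basis $\{1,\alpha\}$ with $\alpha^2=u$ a non-square, the same substitution $x=y-(z-b)\alpha/2$, identical components $g_1,g_2$, the reduction via Lemma \ref{lem-n=2} to $g_2(z)$ permuting $\mathbb{F}_q$, and the same matching of the normalized quintic $z^5-\tfrac{2a^2}{u}z^3+\tfrac{2a^4-4a-\gamma}{2u^2}z$ against Table 7.1 of \cite{LN97}, yielding exactly conditions (i)--(iv). The one substantive difference is in your favor: you explicitly rule out the $q=13$ entry $x^5+ax^3+3a^2x$ ($a$ a non-square) by observing that $A=-2a^2/u$ is necessarily a square in $\mathbb{F}_{13}$, a completeness check the paper's proof silently omits.
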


\begin{thm}\label{Appx.2}
Let $q$ be an odd prime power, $\delta\in\mathbb{F}_{q^2}$ and $\gamma\in\mathbb{F}_q^*$. Then the polynomial
$$f(x) = (x^q-x+\delta)^{2q+4}+(x^q-x+\delta)^{q+5}+\gamma x$$
is a PP of $\mathbb{F}_{q^2}$ if and only if one of the following holds:
\begin{enumerate}
\item[{\rm (i)}]$\Tr_{q}^{q^2}(\delta)=0$;
\item[{\rm (ii)}]
    $\Tr_{q}^{q^2}(\delta)\neq0$,  $q\equiv\pm2\pmod5$ and $40\gamma=19\left(\Tr_{q}^{q^2}(\delta)\right)^5$;
\item[{\rm (iii)}]
    $\Tr_{q}^{q^2}(\delta)\neq0$,    $5\mid q$ and $2\gamma=\left(\Tr_{q}^{q^2}(\delta)\right)^5$.
\end{enumerate}
\end{thm}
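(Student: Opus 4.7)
The plan is to adapt the change-of-variable approach used in Theorem~\ref{Licao2023th}, reducing the PP question for $f$ to the permutation question for a single univariate degree-$5$ polynomial over $\mathbb{F}_q$, which can be decided by consulting Table~7.1 of \cite{LN97}. I would fix a quadratic non-residue $u\in\mathbb{F}_q^*$ and $\alpha\in\mathbb{F}_{q^2}$ with $\alpha^2=u$, so that $\{1,\alpha\}$ is a basis of $\mathbb{F}_{q^2}$ over $\mathbb{F}_q$ and $\alpha^q=-\alpha$. Writing $\delta=a+b\alpha$ and $x=y-(z-b)\alpha/2$ with $y,z\in\mathbb{F}_q$ gives $2a=\Tr_q^{q^2}(\delta)$, $x^q-x+\delta=a+z\alpha$, and $(a+z\alpha)^q=a-z\alpha$. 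The two summands then collapse via the identity
\begin{equation*}
(a+z\alpha)^{q+5}+(a+z\alpha)^{2q+4}=(a-z\alpha)(a+z\alpha)^4\bigl[(a+z\alpha)+(a-z\alpha)\bigr]=2a(a-z\alpha)(a+z\alpha)^4,
\end{equation*}
and an expansion in the basis $\{1,\alpha\}$ yields
\begin{align*}
g_1(y,z)&=\gamma y+4a^4uz^2-6a^2u^2z^4,\\
g_2(y,z)&=-2au^2z^5-4a^3uz^3+(6a^5-\gamma/2)z.
\end{align*}

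By Lemma~\ref{lem-n=2}, $f$ is a PP of $\mathbb{F}_{q^2}$ if and only if $(g_1,g_2)$ permutes $\mathbb{F}_q^2$; since $g_2$ is independent of $y$ and $g_1$ is linear in $y$ with non-zero coefficient $\gamma$, this reduces further to whether the univariate $g_2(z)$ permutes $\mathbb{F}_q$. If $a=0$, that is $\Tr_q^{q^2}(\delta)=0$, the factor $2a$ annihilates the degree-$5$ part and $g_2(z)=-\gamma z/2$ trivially permutes $\mathbb{F}_q$, yielding item (i). For $a\ne0$ I would divide by $-2au^2$ to obtain the normalised form
\begin{equation*}
z^5+Bz^3+Cz,\qquad B=\frac{2a^2}{u},\quad C=\frac{\gamma-12a^5}{4au^2},
\end{equation*}
and then consult Table~7.1 of \cite{LN97}.

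Because $B\ne0$ whenever $a\ne0$, every table entry requiring $B=0$ (the bare $z^5$ entry valid for $q\not\equiv 1\pmod 5$, the $q=9$ sporadic entry, and one of the two char-$5$ entries) is ruled out at once. The Dickson entry needs $q\equiv\pm 2\pmod 5$ together with $B^2=5C$, which after clearing denominators simplifies to $40\gamma=19(2a)^5$, precisely item (ii). The remaining char-$5$ entry requires $5\mid q$, $B^2=4C$, and a squareness side-condition that reduces to ``$-Bu/2$ is a square''; the equation simplifies to $2\gamma=(2a)^5$, while the side-condition becomes ``$-a^2$ is a square'' and holds automatically in $\mathbb{F}_{5^k}$ because $-1$ is always a square there, giving item (iii). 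Substituting $2a=\Tr_q^{q^2}(\delta)$ puts everything into the form stated.

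The main obstacle will be the last-mile bookkeeping needed to verify that no remaining sporadic Table~7.1 entry contributes spurious PPs. The only serious residual contender is the $q=13$ sporadic entry, whose side-condition forces $Bu=2a^2$ to be a square in $\mathbb{F}_{13}$; but $2$ is a non-square in $\mathbb{F}_{13}$, so $Bu$ is a non-square for every $a\ne0$ and the entry is excluded. Once these exclusions are documented, items (i)--(iii) exhaust the criteria, and the iff statement follows.
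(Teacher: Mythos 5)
Your proposal is correct and takes essentially the same route as the paper's proof in the appendix: the same basis $\{1,\alpha\}$ with $\alpha^2=u$ a non-square, the same substitution $x=y-(z-b)\alpha/2$ leading to exactly the same pair $(g_1,g_2)$, the same reduction to the univariate quintic $g_2(z)$, and the same normalized form $z^5+\tfrac{2a^2}{u}z^3+\tfrac{\gamma-12a^5}{4au^2}z$ checked against Table 7.1 of \cite{LN97}. If anything you are more careful than the paper, which silently omits the exclusion of the sporadic $q=9$ and $q=13$ table entries and the verification of the non-square side condition in the characteristic-$5$ entry; your arguments for these points (non-squareness of $2$ in $\mathbb{F}_{13}$, and $-1$ being a square in $\mathbb{F}_{5^k}$) are correct.
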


\begin{thm}\label{Appx.3}
Let $q$ be an odd prime power, $\delta\in\mathbb{F}_{q^2}$ and $\gamma\in\mathbb{F}_q^*$. Then the polynomial
$$f(x) = (x^q-x+\delta)^{2q+4}+(x^q-x+\delta)^{q}+\gamma x$$
is a PP of $\mathbb{F}_{q^2}$ if and only if  one of the following holds:
 \begin{enumerate}
\item[{\rm (i)}] $\Tr_{q}^{q^2}(\delta)=0$ and $\gamma+2\ne0$;
\item[{\rm (ii)}]$\Tr_{q}^{q^2}(\delta)\ne0$, $q\equiv\pm2\pmod5$ and $5\gamma=\left(\Tr_{q}^{q^2}(\delta)\right)^2-10$;
\item[{\rm (iii)}]$\Tr_{q}^{q^2}(\delta)\ne0$, $5\mid q$ and $\gamma=3$.
\end{enumerate}
\end{thm}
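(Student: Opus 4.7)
The plan is to follow the strategy already established in Theorem \ref{Licao2023th}. Choose a quadratic non-residue $u\in\mathbb{F}_q$ and set $\alpha^2=u$, so that $\{1,\alpha\}$ is an $\mathbb{F}_q$-basis of $\mathbb{F}_{q^2}$. Write $\delta=a+b\alpha$ with $a,b\in\mathbb{F}_q$ and substitute $x=y-(z-b)\alpha/2$; a direct calculation gives $x^q-x+\delta=a+z\alpha$ and $2a=\Tr_q^{q^2}(\delta)$. Using $(a+z\alpha)^q=a-z\alpha$, one factors
$$(a+z\alpha)^{2q+4}=(a^2-uz^2)^2\bigl[(a^2+uz^2)+2az\alpha\bigr],$$
so that separating the real and $\alpha$-components of $f(x)=(a+z\alpha)^{2q+4}+(a-z\alpha)+\gamma x$ yields
\begin{align*}
g_1(y,z)&=\gamma y+(a^2-uz^2)^2(a^2+uz^2),\\
g_2(y,z)&=2az(a^2-uz^2)^2-(1+\gamma/2)z,
\end{align*}
up to constants in $\mathbb{F}_{q^2}$. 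By Lemma \ref{lem-n=2}, $f$ is a PP iff $(g_1,g_2)$ permutes $\mathbb{F}_q^2$; since $\gamma\ne0$ and $g_2$ does not depend on $y$, this reduces to whether $g_2(z)=2au^2z^5-4a^3uz^3+(2a^5-1-\gamma/2)z$ permutes $\mathbb{F}_q$.

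I would then split on the value of $a$. If $a=0$ (i.e.\ $\Tr_q^{q^2}(\delta)=0$), then $g_2(z)=-(1+\gamma/2)z$ is linear and permutes $\mathbb{F}_q$ iff $\gamma\ne -2$, giving condition (i). If $a\ne0$, put $g_2$ in the monic form $z^5+Az^3+Bz$ with $A=-2a^2/u$ and $B=(4a^5-2-\gamma)/(4au^2)$, then consult Table 7.1 of \cite{LN97} classifying degree-five permutation polynomials of this shape. Since $A\ne0$, the subcase requiring both coefficients to vanish is ruled out, and the surviving possibilities are: (a) $q\equiv\pm2\pmod5$ with $A^2=5B$, which after clearing denominators and substituting yields the relation in condition (ii); (b) $5\mid q$ with the appropriate square/fourth-power relation on $B$, which collapses to condition (iii). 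The $q=9$ and $q=13$ exceptional entries of Table 7.1 either force $A=0$ or impose further square conditions on $-A$; a short check shows that, in combination with our specific $A,B$, they either contradict $a\ne0$ or are already subsumed by (ii), so no further cases arise.

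The main obstacle will be the characteristic-5 subcase, where Table 7.1 distinguishes permutation polynomials by whether a certain expression is a fourth power or a non-square in $\mathbb{F}_q$; verifying that, once $a\ne0$ and $5\mid q$, this criterion reduces cleanly to the simple relation $\gamma=3$ independently of $a$ requires careful coefficient bookkeeping. The remaining work---carrying out the substitution $A^2=5B$ to match the statement in (ii), and ruling out the $q\equiv1\pmod5$ exceptional scenarios---is routine but mechanical, mirroring the pattern in the proof of Theorem \ref{Licao2023th}.
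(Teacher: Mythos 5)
Your strategy coincides exactly with the paper's: the same basis $\{1,\alpha\}$ with $\alpha^2=u$ a non-square, the same substitution $x=y-(z-b)\alpha/2$ giving $x^q-x+\delta=a+z\alpha$, the same factorization of $(a+z\alpha)^{2q+4}$, the identical pair $(g_1,g_2)$ (your $g_2(z)=2au^2z^5-4a^3uz^3+(2a^5-1-\gamma/2)z$ is precisely the paper's), the same reduction via Lemma \ref{lem-n=2} to whether $g_2$ permutes $\mathbb{F}_q$, and the same appeal to Table 7.1 of \cite{LN97}. Case (i), case (iii) (where the relevant entry is $z^5-2cz^3+c^2z$ with $c=a^2/u$ a non-square, forcing $\gamma=-2\equiv3$), and the elimination of the $q\equiv1\pmod 5$ and $q=9$ entries all go through as you describe; the $q=13$ entry is eliminated not because it is ``subsumed by (ii)'' but because $A=-2a^2/u$ is necessarily a square in $\mathbb{F}_{13}$, though this is a harmless imprecision.

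The genuine problem sits in the step you defer as ``routine but mechanical.'' Carrying out $A^2=5B$ with $A=-2a^2/u$ and $B=(4a^5-2-\gamma)/(4au^2)$ gives
\begin{equation*}
16a^5=20a^5-10-5\gamma,\qquad\text{i.e.}\qquad 5\gamma=4a^5-10,
\end{equation*}
which in terms of the trace reads $40\gamma=\left(\Tr_q^{q^2}(\delta)\right)^5-80$, \emph{not} the relation $5\gamma=\left(\Tr_q^{q^2}(\delta)\right)^2-10$ appearing in condition (ii). So the computation you promise does not ``yield the relation in condition (ii)''; had you actually performed it, you would have found that the statement itself needs correcting (already for $q=7$, $a=3$ the two relations prescribe different values of $\gamma$). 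The paper's own proof commits the identical slip, passing from $\left(2a^2/u\right)^2=5\cdot\frac{4a^5-2-\gamma}{4au^2}$ to ``$5\gamma=4a^2-10$,'' so your proposal faithfully reproduces the paper's argument, error included --- but as a blind proof of the stated theorem it fails at exactly this point.
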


\begin{thm}\label{Appx.4}
Let $q$ be an odd prime power, $\delta\in\mathbb{F}_{q^2}$ and $\gamma\in\mathbb{F}_q^*$. Then the polynomial
$$f(x) = (x^q-x+\delta)^{2q+3}+(x^q-x+\delta)^{5q}+\gamma x$$
is a PP of $\mathbb{F}_{q^2}$ if and only if one of the following holds:
 \begin{enumerate}
\item[{\rm (i)}]  $\Tr_{q}^{q^2}(\delta)=0$;
\item[{\rm (ii)}]$\Tr_{q}^{q^2}(\delta)\ne0$, $3\mid q$ and $\gamma\neq \left(\Tr_{q}^{q^2}(\delta)\right)^4$;
\item[{\rm (iii)}]$\Tr_{q}^{q^2}(\delta)\ne0$,  $q\equiv 2\pmod 3$ and $ \gamma=-\left(\Tr_{q}^{q^2}(\delta)\right)^4/2$.
\end{enumerate}
\end{thm}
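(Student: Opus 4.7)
The plan is to apply Lemma \ref{lem-n=2} following the same template as in Theorems \ref{Wuyuan}--\ref{Licao2023th}. Choose a non-square $u\in\mathbb{F}_q$ with $\alpha^2=u$, so $\{1,\alpha\}$ is a basis of $\mathbb{F}_{q^2}$ over $\mathbb{F}_q$. Write $\delta=a+b\alpha$ and $\gamma=c\in\mathbb{F}_q^*$, and set $x=y-(z-b)\alpha/2$ so that $x^q-x+\delta=a+z\alpha$. Since $(a+z\alpha)^q=a-z\alpha$, we have
\[
(a+z\alpha)^{2q+3}=(a-z\alpha)^2(a+z\alpha)^3,\qquad (a+z\alpha)^{5q}=(a-z\alpha)^5.
\]
A direct expansion gives real part $a^5-2a^3uz^2+au^2z^4$ and imaginary part $a^4z-2a^2uz^3+u^2z^5$ for the first term, and real part $a^5+10a^3uz^2+5au^2z^4$ and imaginary part $-5a^4z-10a^2uz^3-u^2z^5$ for the second. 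The crucial observation is that the $u^2z^5$ contributions cancel; after adding $\gamma x$ we obtain, up to an additive constant in $\mathbb{F}_{q^2}$,
\[
g_1(y,z)=cy+8a^3uz^2+6au^2z^4,\qquad g_2(y,z)=-(c/2+4a^4)z-12a^2uz^3.
\]

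By Lemma \ref{lem-n=2}, $f$ is a PP of $\mathbb{F}_{q^2}$ iff $(g_1,g_2)$ permutes $\mathbb{F}_q^2$. Since $g_2$ depends only on $z$ and the $y$-coefficient of $g_1$ is $c\neq 0$, this reduces to asking that $g_2(z)$ permute $\mathbb{F}_q$. I would then split into cases according to $a=\Tr_q^{q^2}(\delta)/2$ and the characteristic. If $a=0$, then $g_2(z)=-(c/2)z$ clearly permutes $\mathbb{F}_q$, giving case (i). If $a\neq 0$ and $3\mid q$, the cubic term vanishes and $g_2(z)=-(c/2+4a^4)z$, which permutes iff $c\neq -8a^4$; using $16a^4=a^4$ in characteristic $3$, this translates exactly to $\gamma\neq\bigl(\Tr_q^{q^2}(\delta)\bigr)^4$, yielding case (ii). If $a\neq 0$ and $3\nmid q$, the normalized form of $g_2(z)$ is $z^3+\frac{c+8a^4}{24a^2u}z$; by Table 7.1 in \cite{LN97} this permutes $\mathbb{F}_q$ iff $q\equiv 2\pmod 3$ and the coefficient of $z$ vanishes, i.e.\ $c=-8a^4=-\bigl(\Tr_q^{q^2}(\delta)\bigr)^4/2$, yielding case (iii); when $q\equiv 1\pmod 3$ no permutation occurs.

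The proof is essentially mechanical once the expansion is performed correctly; the only step requiring care is verifying the cancellation of the quintic term in $z$ (which is what makes the reduction to a cubic in $z$ go through) and handling characteristic $3$ separately so that the coefficient of $z^3$ does not vanish unexpectedly. Assembling the cases and rewriting everything in terms of $\Tr_q^{q^2}(\delta)=2a$ then completes the proof.
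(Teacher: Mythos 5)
Your proposal is correct and follows essentially the same route as the paper's proof in the appendix: the same basis $\{1,\alpha\}$ with $\alpha^2=u$, the same substitution $x=y-(z-b)\alpha/2$, the identical pair $(g_1,g_2)$ (the paper gets there by first factoring $(a+z\alpha)^3+(a-z\alpha)^3=2a(a^2+3uz^2)$, you by expanding both terms and noting the $u^2z^5$ cancellation, which is the same computation), and the same reduction to $g_2(z)$ with the same three-way case split via Table 7.1 of \cite{LN97}. All the resulting conditions match the stated ones, so nothing further is needed.
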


\begin{thm}\label{Appx.5}
Let $q$ be an odd prime power, $\delta\in\mathbb{F}_{q^2}$ and $\gamma\in\mathbb{F}_q^*$. Then the polynomial
$$f(x) = (x^q-x+\delta)^{2q+4}+(x^q-x+\delta)^{2q}+\gamma x, $$
is a PP of $\mathbb{F}_{q^2}$ if and only if one of the following holds:
\begin{enumerate}
\item[{\rm (i)}]  $\Tr_{q}^{q^2}(\delta)=0$;
\item[{\rm (ii)}]$\Tr_{q}^{q^2}(\delta)\neq0$, $q\equiv\pm2\pmod5$ and $\gamma=\left(\Tr_{q}^{q^2}(\delta)\right)^5/40-2\Tr_{q}^{q^2}(\delta)$;
\item[{\rm (iii)}]$\Tr_{q}^{q^2}(\delta)\neq0$,
    $5\mid q$ and $\gamma=-2\Tr_{q}^{q^2}(\delta)$.
\end{enumerate}
\end{thm}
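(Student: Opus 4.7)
The plan is to follow the change-of-variables framework established in Theorems \ref{Wuyuan}--\ref{Licao2023th}: fix a quadratic non-residue $u\in\mathbb{F}_q$, take $\alpha\in\mathbb{F}_{q^2}$ with $\alpha^2=u$, write $\delta=a+b\alpha$ (so $2a=\Tr_q^{q^2}(\delta)$), and substitute $x=y-(z-b)\alpha/2$ with $y,z\in\mathbb{F}_q$. Then $x^q-x+\delta=a+z\alpha$, and using $(a+z\alpha)^q=a-z\alpha$ together with $(a+z\alpha)(a-z\alpha)=a^2-uz^2$, one obtains
\[
(a+z\alpha)^{2q+4}+(a+z\alpha)^{2q}=(a^2-uz^2)^2(a+z\alpha)^2+(a-z\alpha)^2.
\]
Expanding and separating into $\{1,\alpha\}$-components yields $f(x)=g_1(y,z)+g_2(z)\alpha$ (plus a constant) with
\[
g_1(y,z)=\gamma y+(a^2+uz^2)\bigl[(a^2-uz^2)^2+1\bigr],\qquad g_2(z)=2au^2z^5-4a^3uz^3+(2a^5-2a-\gamma/2)z,
\]
and note that $g_2$ depends only on $z$.

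By Lemma \ref{lem-n=2}, $f$ is a PP of $\mathbb{F}_{q^2}$ if and only if $(g_1,g_2)$ permutes $\mathbb{F}_q^2$. Since $\gamma\in\mathbb{F}_q^*$ makes $g_1$ linear and bijective in $y$ for each fixed $z$, and $g_2$ is independent of $y$, the triangular-structure argument used repeatedly in the preceding proofs reduces this to the condition that $g_2(z)$ permute $\mathbb{F}_q$. When $a=0$, equivalently $\Tr_q^{q^2}(\delta)=0$, $g_2(z)=-\gamma z/2$ is a nonzero linear polynomial and is therefore automatically a PP, establishing case (i).

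For $a\neq 0$, dividing $g_2$ by its leading coefficient $2au^2$ produces the normalized form $z^5+Az^3+Bz$ with $A=-2a^2/u\neq 0$ and $B=(4a^5-4a-\gamma)/(4au^2)$. I would then invoke Table~7.1 of \cite{LN97}, which classifies normalized degree-$5$ PPs of $\mathbb{F}_q$ with no $z^2$-term, and enumerate its subcases according to $q\bmod 5$. The branch $q\equiv\pm 2\pmod 5$ imposes $A^2=5B$, which after substitution and clearing denominators reduces to $5\gamma=4a^5-20a$; applying $2a=\Tr_q^{q^2}(\delta)$ turns this into $\gamma=\left(\Tr_q^{q^2}(\delta)\right)^5/40-2\Tr_q^{q^2}(\delta)$, giving (ii). The branch $5\mid q$ imposes $A^2=4B$ (with its auxiliary quadratic-residue hypothesis $-Au/2=a^2$ being automatic), and simplifies to $\gamma=-4a=-2\Tr_q^{q^2}(\delta)$, giving (iii).

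The main obstacle is ruling out the remaining branches of Table~7.1. The branch $q\not\equiv 1\pmod 5$ with $5\nmid q$ forces $A=B=0$ and so is incompatible with $A\neq 0$, while the sporadic entries for $q=9$ and $q=13$ must be checked explicitly against the constrained shape $A=-2a^2/u$, $B=(4a^5-4a-\gamma)/(4au^2)$. One must verify that the extra quadratic-residue or sign conditions on $A$ appearing in those sporadic entries, together with the induced coefficient identities, either are inconsistent or already lie in the families described by (ii) or (iii); this is the analog of the explicit enumeration over $(a,\gamma)$ carried out for the exceptional primes in Theorem~\ref{Licao2023th}.
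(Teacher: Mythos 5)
Your proposal follows exactly the paper's route: the same substitution $x=y-(z-b)\alpha/2$, the same pair $(g_1,g_2)$ with $g_2(z)=2au^2z^5-4a^3uz^3+(2a^5-2a-\gamma/2)z$, the same reduction via Lemma \ref{lem-n=2} to the permutation of $g_2$ alone, and the same normalized quintic $z^5+Az^3+Bz$ with $A=-2a^2/u$, $B=(4a^5-4a-\gamma)/(4au^2)$, yielding cases (i)--(iii) identically. The one step you defer --- ruling out the sporadic rows of Table 7.1 --- is genuinely needed but easy: the $q=9$ row requires $A=0$, which fails for $a\neq0$, and the $q=13$ row requires $A$ to be a non-square, whereas $A=-2\cdot(a^2/u)$ is a product of two non-squares of $\mathbb{F}_{13}$ (note $-2\equiv 11$ is a non-residue mod $13$) and hence always a square; the paper's own appendix proof silently omits this check as well, so completing it would make your argument slightly more rigorous than the published one.
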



\begin{thm}\label{Appx.6}
Let $q$ be an odd prime power, $\delta\in\mathbb{F}_{q^2}$ and $\gamma\in\mathbb{F}_q^*$. Then the polynomial
$$f(x) = (x^q-x+\delta)^{ q+5}+(x^q-x+\delta)^{2 q}+\gamma x, $$
is a PP of $\mathbb{F}_{q^2}$ if and only if one of the following holds:
\begin{enumerate}
\item[{\rm (i)}]  $\Tr_{q}^{q^2}(\delta)=0$;

\item[{\rm (ii)}] $\Tr_{q}^{q^2}(\delta)\neq0$, $q\not\equiv1\pmod5$ and $\gamma=\left(\Tr_{q}^{q^2}(\delta)\right)^5/4-2\Tr_{q}^{q^2}(\delta)$;
\item[{\rm (iii)}] $\Tr_{q}^{q^2}(\delta)\neq0$, $5\mid q$ and
    $\left(\left(\Tr_{q}^{q^2}(\delta)\right)^5+2\Tr_{q}^{q^2}(\delta)+\gamma\right)\big/\Tr_{q}^{q^2}(\delta)$ is a fourth power or not a square in $\mathbb{F}_q$;

\item[{\rm (iv)}] $\Tr_{q}^{q^2}(\delta)\neq0$, $q=9$ and  $\gamma= \left(\Tr_{q}^{q^2}(\delta)\right)^5$, or $\gamma=
    \left(\Tr_{q}^{q^2}(\delta)\right)^5-\Tr_{q}^{q^2}(\delta)$.
\end{enumerate}
 \end{thm}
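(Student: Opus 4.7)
The plan is to follow the argument of Theorem \ref{Licao2023th} and Theorems \ref{Appx.1}--\ref{Appx.5} closely. Fix a quadratic non-residue $u\in\mathbb{F}_q$ and $\alpha\in\mathbb{F}_{q^2}$ with $\alpha^2=u$, so that $\{1,\alpha\}$ is a basis of $\mathbb{F}_{q^2}$ over $\mathbb{F}_q$. Writing $\delta=a+b\alpha$ (so $\Tr_q^{q^2}(\delta)=2a$) and substituting $x=y-(z-b)\alpha/2$, one has $x^q-x+\delta=a+z\alpha$ and $(a+z\alpha)^q=a-z\alpha$; hence
\begin{equation*}
(a+z\alpha)^{q+5}=(a-z\alpha)(a+z\alpha)^5,\qquad(a+z\alpha)^{2q}=(a-z\alpha)^2.
\end{equation*}
Expanding these and collecting components in the basis $\{1,\alpha\}$ decomposes $f(x)$ as $g_1(y,z)+g_2(y,z)\alpha$ plus a constant, where $g_1(y,z)=\gamma y+h(z)$ carries the only $y$-dependence (through the $\gamma x$ summand, since $\gamma\in\mathbb{F}_q^*$) and
\begin{equation*}
g_2(y,z)=g_2(z)=-4au^2 z^5+\bigl(4a^5-2a-\tfrac{\gamma}{2}\bigr)z
\end{equation*}
depends only on $z$. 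By Lemma \ref{lem-n=2}, $f$ is a PP of $\mathbb{F}_{q^2}$ iff $(g_1,g_2)$ permutes $\mathbb{F}_q^2$, which in turn (because $g_1$ is affine in $y$ with nonzero slope $\gamma$) is equivalent to $g_2(z)$ permuting $\mathbb{F}_q$.

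If $a=0$, then $g_2(z)=-(\gamma/2)z$ is trivially a PP, yielding (i). Suppose $a\ne 0$. Dividing by $-4au^2$ brings $g_2$ into the normalized form $z^5-Bz$ with
\begin{equation*}
B=\frac{8a^5-4a-\gamma}{8au^2}.
\end{equation*}
I would then apply Table 7.1 of \cite{LN97} classifying degree-$5$ normalized PPs with vanishing $z^4, z^3, z^2$ coefficients. Three relevant table branches survive: (a) $B=0$, in which case $z^5$ is a PP iff $\gcd(5,q-1)=1$, i.e., $q\not\equiv 1\pmod 5$, and $B=0$ rearranges to $\gamma=8a^5-4a=\Tr_q^{q^2}(\delta)^5/4-2\Tr_q^{q^2}(\delta)$, giving (ii); (b) $5\mid q$ and $B\ne 0$, where $z^5-Bz$ permutes $\mathbb{F}_q$ iff $B\notin(\mathbb{F}_q^*)^4$; (c) the exceptional $q=9$ entry.

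The main obstacle is re-expressing (b) and (c) in the forms stated in (iii) and (iv). For (b), a short calculation in characteristic $5$ shows $B=(T^5+2T+\gamma)/(Tu^2)$ with $T:=\Tr_q^{q^2}(\delta)$. Since $q=5^m\equiv 1\pmod 4$ the element $-1$ is a square, and because $u$ is a non-square this places $u^2$ in the nontrivial square coset of $(\mathbb{F}_q^*)^4$ inside $(\mathbb{F}_q^*)^2$; hence $u^2(\mathbb{F}_q^*)^4$ is precisely the set of squares that are not fourth powers, and the condition $B\notin(\mathbb{F}_q^*)^4$ becomes exactly the requirement in (iii) that $(T^5+2T+\gamma)/T$ be a fourth power or a non-square. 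For (c), in characteristic $3$ the polynomial $g_2$ normalizes to $z^5+cz$ with $c=-(a^5+a+\gamma)/(au^2)$. Every non-square in $\mathbb{F}_9^*$ is primitive, so $u^4=-1$; a direct enumeration of the nine elements of $\mathbb{F}_9$ then shows that $z^5+cz$ is a PP of $\mathbb{F}_9$ iff either $c=0$ or $c^2-1$ is a nonzero square, equivalently iff $Y:=a^5+a+\gamma$ satisfies $Y/a\in\mathbb{F}_3$. Translating via $T=2a$ produces $\gamma\in\{T^5+T,\,T^5,\,T^5-T\}$, the first coinciding with the $q=9$ instance of (ii) and the other two being the isolated values in (iv). The remaining algebraic verifications are routine mirrors of the calculations in Theorems \ref{Licao2023th}--\ref{Appx.5}.
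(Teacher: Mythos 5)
Your proposal is correct and follows essentially the same route as the paper: the same basis $\{1,\alpha\}$ with $\alpha^2=u$, the same substitution $x=y-(z-b)\alpha/2$, the same reduction via Lemma \ref{lem-n=2} to the single quintic $g_2(z)=-4au^2z^5+(4a^5-2a-\gamma/2)z$, and the same appeal to Table 7.1 of \cite{LN97} with the identical normalized form $z^5-\frac{8a^5-4a-\gamma}{8au^2}z$. Your coset argument for the $5\mid q$ branch and your direct enumeration recovering the $q=9$ entry (via $u^4=-1$) reproduce exactly the translations the paper performs, so there is nothing substantively different to report.
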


{\bf Remark:} For the case $q=9$ and $\Tr_{q}^{q^2}(\delta)\neq0$ in Theorem \ref{Appx.6}, we can explicitly determine all possible pairs $(\Tr_{q}^{q^2}(\delta), \gamma)$. Since the polynomial $x^2+1$ is irreducible over $\mathbb{F}_3$, we have $\mathbb{F}_9\cong\mathbb{F}_3[x]/(x^2+1)$, with elements represented as $c+di$ with $c,d\in\mathbb{F}_3$, and $i^2=-1$. Let $g=1+i\in\mathbb{F}_9$ be a generator of $\mathbb{F}_9^*$. Then every non-zero trace value  can be written as $\Tr_{q}^{q^2}(\delta)=g^k$ for $0\leq k\leq 7$. The corresponding values of $\gamma$ are given by $\gamma=g^{5k}$ or $\gamma=g^{5k}-g^k$. A direct computation yields exactly 12 distinct admissible pairs  $(\Tr_{q}^{q^2}(\delta), \gamma)$. More precisely, $(\Tr_{q}^{q^2}(\delta), \gamma)\in\{(1,1), (1+i,2+2i), (2i,2i),(1+2i,2+i),(2,2),(2+2i,1+i),(i,i),(i+2,1+2i),(1+i,1+i),(1+2i,1+2i),(2+2i,2+2i),(2+i,2+i)\}$.

\medskip

 All PPs presented in Theorems \ref{Wuyuan}-\ref{Appx.6} are of the form $(2)$ or $(3)$ with $L(x)=x$. Recently, Liu, Jiang and Zou \cite{Liu-Jiang-Zou2025} studied PPs with the form $\sum_i(x^{p^m}-x+\delta)^{s_i}+\gamma(x^{p^m}+x)$ over $\mathbb{F}_{p^{2m}}$, where $\gamma\in\mathbb{F}_{p^m}^*$ and $p\in\{3,5\}$, and proposed six new classes of  such PPs. In what follows, we aim to generalize their results to finite fields with arbitrary odd characteristic.

\begin{thm}\label{Liu-Jiang-Zou1}
Let $p$ be an odd prime and $q=p^m$. For a positive integer $i<m$, $\delta\in\mathbb{F}_{q^2}$  and $\gamma\in\mathbb{F}_{q}^\ast$, the polynomial
$$f(x)=(x^q-x+\delta)^{q+p^i}+\gamma(x^q+x), $$
is a PP of $\mathbb{F}_{q^2}$ if and only if $\Tr_q^{q^2}(\delta)\ne0$.
\end{thm}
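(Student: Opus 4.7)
The plan is to apply Lemma~\ref{lem-n=2} in exactly the style of the proofs of Theorems~\ref{Wuyuan}--\ref{Licao2023th}. Fix a non-square $u\in\mathbb{F}_q$ and $\alpha\in\mathbb{F}_{q^2}$ with $\alpha^2=u$, so that $\{1,\alpha\}$ is an $\mathbb{F}_q$-basis of $\mathbb{F}_{q^2}$ and $\alpha^q=-\alpha$. Writing $\delta=a+b\alpha$ and $x=y-(z-b)\alpha/2$ with $a,b,y,z\in\mathbb{F}_q$, the same routine substitution used throughout Section~3 gives
$$x^q-x+\delta=a+z\alpha,\quad x^q+x=2y,\quad 2a=\Tr_q^{q^2}(\delta).$$

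The next step is to expand $(a+z\alpha)^{q+p^i}$. Because $\alpha^2=u\in\mathbb{F}_q$ and $p$ is odd, the Frobenius identity $\alpha^{p^i}=u^{(p^i-1)/2}\alpha$ holds, and combined with $\alpha^q=-\alpha$ it yields
$$(a+z\alpha)^{q+p^i}=(a-z\alpha)\bigl(a^{p^i}+z^{p^i}u^{(p^i-1)/2}\alpha\bigr).$$
Setting $w=u^{(p^i-1)/2}$, multiplying out using $\alpha^2=u$, and adding $\gamma(x^q+x)=2\gamma y$, the $\mathbb{F}_q$-components of $f(x)$ in the basis $\{1,\alpha\}$ come out to
$$g_1(y,z)=2\gamma y+a^{p^i+1}-uw\,z^{p^i+1},\qquad g_2(y,z)=aw\,z^{p^i}-a^{p^i}z.$$
Since $g_2$ is independent of $y$, while $g_1$ is affine in $y$ with nonzero leading coefficient $2\gamma$, Lemma~\ref{lem-n=2} reduces the whole problem to the univariate question: when does $g_2(z)=aw\,z^{p^i}-a^{p^i}z$ permute $\mathbb{F}_q$?

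The ``only if'' direction is immediate: when $a=0$, $g_2\equiv 0$, so $\Tr_q^{q^2}(\delta)=2a\ne 0$ is necessary. For the ``if'' direction, assuming $a\ne 0$, the polynomial $g_2$ is an $\mathbb{F}_p$-linearized polynomial of $p^i$-degree, so it permutes $\mathbb{F}_q$ iff its kernel in $\mathbb{F}_q$ is trivial; after the substitution $t=z/a$ the kernel condition rewrites as $wt^{p^i-1}=1$, equivalently $(ut^2)^{(p^i-1)/2}=1$, and the goal is to show this has no solution $t\in\mathbb{F}_q^*$.

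The main obstacle is exactly this final kernel computation. As $t$ varies over $\mathbb{F}_q^*$, the element $ut^2$ ranges over all nonsquares of $\mathbb{F}_q^*$ (since $u$ is a nonsquare), so the claim reduces to showing that no nonsquare of $\mathbb{F}_q^*$ is a $(p^i-1)/2$-th root of unity; equivalently, the cyclic subgroup of $(p^i-1)/2$-th roots of unity in $\mathbb{F}_q^*$ must be contained in the index-$2$ subgroup of squares. This is where the genuine arithmetic content of the theorem lies---a comparison of $2$-adic valuations of $(p^i-1)/2$ and $q-1$---as opposed to the purely symbolic manipulations that sufficed for the earlier reductions.
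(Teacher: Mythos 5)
Your reduction coincides with the paper's proof step for step: the same basis $\{1,\alpha\}$ with $\alpha^2=u$ a non-square, the same substitution $x=y-(z-b)\alpha/2$, and the same components $g_1(y,z)=2\gamma y-u^{(p^i+1)/2}z^{p^i+1}+\mathrm{const}$ and $g_2(z)=a\bigl(u^{(p^i-1)/2}z^{p^i}-a^{p^i-1}z\bigr)$, so that everything hinges on whether the linearized polynomial $g_2$ has trivial kernel when $a\ne0$. The problem is that you stop precisely at that point: you correctly rewrite the kernel condition as ``$(ut^2)^{(p^i-1)/2}=1$ has no solution $t\in\mathbb{F}_q^*$'', note that this is a question about $2$-adic valuations, and then leave it unproved. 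That is a genuine gap, not a routine omission, because the claim is \emph{false} in general. Writing $v_2$ for the $2$-adic valuation, a nonsquare $s=ut^2$ with $s^{(p^i-1)/2}=1$ exists if and only if $v_2(q-1)\le v_2\bigl((p^i-1)/2\bigr)$; that is, the kernel is trivial if and only if $v_2(p^m-1)\ge v_2(p^i-1)$ (equivalently $v_2(i)\le v_2(m)$). For $p=3$, $i=2$, $m=3$ this fails: $(p^i-1)/2=4$, and $-1$ is a nonsquare of $\mathbb{F}_{27}$ with $(-1)^4=1$; taking $t$ with $t^2=-u^{-1}$ (a square, so such a $t$ exists) gives a nonzero root $z=at$ of $g_2$, hence $f$ is not a PP even though $\Tr_q^{q^2}(\delta)\ne0$.

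To be fair, the paper's own proof is no better here: it dismisses the kernel computation with the words ``since $u$ is a non-square in $\mathbb{F}_q$''. That one-line argument (left side equals a nonsquare times a nonzero square, right side a nonzero square) is valid only when $(p^i-1)/2$ is odd, i.e.\ when $p\equiv3\pmod 4$ and $i$ is odd; outside that range $u^{(p^i-1)/2}$ is itself a square and the parity obstruction disappears. So your instinct that the real arithmetic content of the theorem is concentrated in this last step is exactly right, but the correct conclusion is that the statement needs the additional hypothesis $v_2(p^m-1)\ge v_2(p^i-1)$ (for instance, $i$ odd suffices), not that the kernel is automatically trivial whenever $a\ne0$. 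As it stands your proposal establishes the ``only if'' direction and reduces the ``if'' direction to a claim that is sometimes false.
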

\begin{proof} Since $q$ is an odd prime power,  there is a non-square element $u\in\mathbb{F}_{q}$. Let $\alpha$ be an element of $\mathbb{F}_{q^2}$ with $\alpha^2=u$. Then $\{1, \alpha\}$ is a basis of $\mathbb{F}_{q^2}$ over $\mathbb{F}_q$. Let $\delta=a+b\alpha$ with  $a, b\in\mathbb{F}_q$ and $x=y-(z-b)\alpha/2$. Then $2a=\Tr_q^{q^2}(\delta)$ and $x^q-x+\delta=a+z\alpha$. We have
\begin{align*}
f(x)=f(y-(z-b)\alpha/2)&=(a-z\alpha)(a+z\alpha)^{p^i}+2\gamma y\\
&=2\gamma y-z^{p^i+1}u^{(p^i+1)/2}+a^{p^i+1}+a(u^{(p^i-1)/2}z^{p^i}-a^{p^i-1}z)\alpha.
\end{align*}
Hence
\begin{equation*}
\begin{cases}
g_1(y, z)=2\gamma y-z^{p^i+1}u^{(p^i+1)/2},\\
 g_2(y,z)=a(u^{(p^i-1)/2}z^{p^i}-a^{p^i-1}z).
 \end{cases}
 \end{equation*}
Observe that $g_2(y, z)$ is a polynomial of $z$ and the $y$-part of $g_1(y, z)$ is $2\gamma y$. Hence, $(g_1(y,z),g_2(y,z))$ permutes $\mathbb{F}_q^2$ if and only if $g_2(y, z)=g_2(z)$ is a PP over $\mathbb{F}_q$. Clearly, if $a=0$, then $g_2(z)=0$ cannot be a PP of $\mathbb{F}_q$. Now assume $a\neq0$.
Clearly $g_2(z)$ is a linearized polynomial over $\mathbb{F}_q$. Therefore, $g_2(z)$ is a PP of $\mathbb{F}_q$ if and only if $$u^{(p^i-1)/2}z^{p^i-1}-a^{p^i-1}=0$$
 has no solution in  $\mathbb{F}_q$, which is equivalent to $a=\Tr_q^{q^2}(\delta)/2\ne0$, since $u$ is a non-square in $\mathbb{F}_q$. It follows that $f(x)$ is a PP of $\mathbb{F}_{q^2}$ if and only if $\Tr_q^{q^2}(\delta)\ne0$. Thus the theorem is established.
\end{proof}

\begin{thm}\label{Liu-Jiang-Zou2}
Let $q$ be an odd prime power, $\delta\in\mathbb{F}_{q^2}$ and $\gamma\in\mathbb{F}_{q}^\ast$. Then the polynomial
$$f(x)=(x^q-x+\delta)^{q+2}+\gamma(x^q+x), $$
is a PP of $\mathbb{F}_{q^2}$ if and only if
$q\equiv0\pmod3$, or  $q\equiv2\pmod{3}$ and $\Tr_q^{q^2}(\delta)=0$.
\end{thm}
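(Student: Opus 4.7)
I will follow the same recipe used in Theorems \ref{Wuyuan}--\ref{Licao2023th}. Pick a quadratic non-residue $u\in\mathbb{F}_q$, let $\alpha\in\mathbb{F}_{q^2}$ satisfy $\alpha^2=u$, so that $\{1,\alpha\}$ is a basis of $\mathbb{F}_{q^2}$ over $\mathbb{F}_q$ and $\alpha^q=-\alpha$. Writing $\delta=a+b\alpha$ with $a,b\in\mathbb{F}_q$ (hence $2a=\Tr_q^{q^2}(\delta)$) and using the substitution $x=y-(z-b)\alpha/2$, one checks directly that
$$x^q-x+\delta=a+z\alpha,\qquad x^q+x=2y.$$
Then $f(x)=(a+z\alpha)^{q+2}+2\gamma y=(a-z\alpha)(a+z\alpha)^2+2\gamma y$.

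Expanding $(a-z\alpha)(a+z\alpha)^2$ gives $(a^3-auz^2)+(a^2z-uz^3)\alpha$, so collecting the real and $\alpha$ parts yields
$$g_1(y,z)=2\gamma y-auz^2,\qquad g_2(y,z)=-uz^3+a^2z.$$
By Lemma \ref{lem-n=2}, $f$ is a PP of $\mathbb{F}_{q^2}$ iff $(g_1,g_2)$ permutes $\mathbb{F}_q^2$. Since $g_2$ depends only on $z$ and the $y$-coefficient of $g_1$ is $2\gamma\neq 0$, the now-standard argument in this paper reduces the question to whether $g_2(z)=-uz^3+a^2z$ permutes $\mathbb{F}_q$.

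The remaining step is an appeal to Table 7.1 in \cite{LN97}. The normalized form of $g_2(z)$ is $z^3-(a^2/u)z$, so I split into cases on $q\bmod 3$. If $3\mid q$ and $a=0$, $g_2$ reduces to $-uz^3$, the Frobenius up to a scalar, hence a PP; if $3\mid q$ and $a\neq 0$, then $a^2/u$ is a non-square in $\mathbb{F}_q$ (square over non-square), and the char-3 entry of Table 7.1 gives that $z^3-(a^2/u)z$ is a PP. So in characteristic $3$ the polynomial is always a PP, independent of $\Tr_q^{q^2}(\delta)$. If $q\equiv 2\pmod 3$ and $3\nmid q$, the only normalized cubic PP listed is $z^3$, which forces $a=0$, i.e.\ $\Tr_q^{q^2}(\delta)=0$. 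If $q\equiv 1\pmod 3$, no normalized cubic of the form $z^3+bz$ is a PP, so $g_2$ cannot permute $\mathbb{F}_q$. Combining the three cases gives precisely the stated characterization.

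There is no real obstacle here: once the substitution is carried out and signs are tracked correctly, the problem collapses to a single cubic in $z$ and Table 7.1 of \cite{LN97} finishes it off. The only point requiring mild care is verifying that $a^2/u$ is a non-square when $a\neq 0$ in characteristic $3$, so that the char-3 criterion applies uniformly.
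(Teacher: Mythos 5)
Your proposal is correct and follows essentially the same route as the paper's own proof (given in the appendix): the same basis $\{1,\alpha\}$ with $\alpha^2=u$ a non-square, the same substitution $x=y-(z-b)\alpha/2$ yielding $g_1(y,z)=2\gamma y-auz^2$ and $g_2(z)=-uz^3+a^2z$, the same triangular reduction to the single cubic $g_2$, and the same appeal to Table 7.1 of \cite{LN97}. Your extra remark that $a^2/u$ is automatically a non-square when $a\neq 0$ (so the characteristic-$3$ case always succeeds) is exactly the observation underlying the paper's conclusion.
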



\begin{thm}\label{Liu-Jiang-Zou3}
Let $q$ be an odd prime power, $\delta\in\mathbb{F}_{q^2}$ and $\gamma\in\mathbb{F}_{q}^\ast$. Then the polynomial
$$f(x)=(x^q-x+\delta)^{3 q+2}+\gamma(x^q+x), $$
is a PP of $\mathbb{F}_{q^2}$ if and only if
 $q\equiv0\pmod 5$, or $q\equiv2,3,4\pmod5$ and $\Tr_q^{q^2}(\delta)=0$.
\end{thm}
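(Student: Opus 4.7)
The plan is to follow the coordinate-descent template used in the proofs of Theorems~\ref{Liu-Jiang-Zou1} and~\ref{Liu-Jiang-Zou2}. Fix a quadratic non-residue $u\in\mathbb{F}_q$ and $\alpha\in\mathbb{F}_{q^2}$ with $\alpha^2=u$, so that $\{1,\alpha\}$ is a basis of $\mathbb{F}_{q^2}$ over $\mathbb{F}_q$ and $\alpha^q=-\alpha$. Write $\delta=a+b\alpha$ with $a,b\in\mathbb{F}_q$ (so $2a=\Tr_q^{q^2}(\delta)$), and substitute $x=y-(z-b)\alpha/2$ for $y,z\in\mathbb{F}_q$. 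A direct computation gives $x^q-x+\delta=a+z\alpha$ and $x^q+x=2y$.

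The crucial simplification is the identity
\[
(a+z\alpha)^{3q+2}=(a-z\alpha)^3(a+z\alpha)^2=(a-z\alpha)\bigl[(a-z\alpha)(a+z\alpha)\bigr]^2=(a-z\alpha)(a^2-uz^2)^2,
\]
which collapses the high exponent into a compact product. Expanding and separating the real and $\alpha$-components, and adding $\gamma(x^q+x)=2\gamma y$, one obtains (modulo an additive constant in $\mathbb{F}_{q^2}$)
\begin{equation*}
\begin{cases}
g_1(y,z)=2\gamma y+au^2z^4-2a^3uz^2,\\
g_2(y,z)=-u^2z^5+2a^2uz^3-a^4z=-z(uz^2-a^2)^2.
\end{cases}
\end{equation*}
Since $g_2$ is univariate in $z$ and the coefficient of $y$ in $g_1$ is the nonzero element $2\gamma$, Lemma~\ref{lem-n=2} reduces the problem to deciding when $g_2(z)=-z(uz^2-a^2)^2$ permutes $\mathbb{F}_q$.

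If $a=0$, i.e., $\Tr_q^{q^2}(\delta)=0$, then $g_2(z)=-u^2z^5$ is a PP of $\mathbb{F}_q$ iff $\gcd(5,q-1)=1$, equivalently $q\not\equiv 1\pmod 5$. This accounts for both the $q\equiv 0\pmod 5$ branch (restricted to $a=0$) and the $q\equiv 2,3,4\pmod 5$ branch with $\Tr_q^{q^2}(\delta)=0$.

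The main obstacle is the case $a\ne 0$, where I must show that $g_2$ permutes $\mathbb{F}_q$ precisely when $5\mid q$. After dividing by $-u^2$ the polynomial takes the normalized form $z^5-(2a^2/u)z^3+(a^4/u^2)z$, and I would invoke the classification of quintic permutation polynomials in Table~7.1 of \cite{LN97}, running through the residue classes of $q\pmod 5$. In characteristic~$5$ the identity $r^4+r^3+r^2+r+1=(r-1)^4$ renders the only relevant cyclotomic factor a nonzero square, which (together with the fact that $a^2/u$ is a nonsquare) prevents any collision $g_2(z_1)=g_2(z_2)$ with $z_1\neq z_2$, so $g_2$ is a PP for every $a\ne 0$. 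In the residues $q\equiv 1,2,3,4\pmod 5$, the rigid coefficient conditions imposed by the table on normalized quintic PPs cannot be satisfied by the pair $(-2a^2/u,\,a^4/u^2)$ for any $a\ne 0$, so $g_2$ fails to be a PP. Combining these cases yields the stated characterization.
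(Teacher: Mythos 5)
Your proposal is correct and follows essentially the same route as the paper's proof in the appendix: the same basis $\{1,\alpha\}$ with $\alpha^2=u$, the same substitution $x=y-(z-b)\alpha/2$ yielding the identical pair $(g_1,g_2)$, the same reduction via Lemma~\ref{lem-n=2} to the univariate polynomial $g_2(z)$ with normalized form $z^5-(2a^2/u)z^3+(a^4/u^2)z$, and the same appeal to Table~7.1 of \cite{LN97}. The only cosmetic differences are that you treat $a=0$ separately via $\gcd(5,q-1)$ and note the factorization $g_2(z)=-z(uz^2-a^2)^2$, while the paper handles both cases uniformly through the table.
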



\begin{thm}\label{Liu-Jiang-Zou4}
Let $q$ be an odd prime power, $\delta\in\mathbb{F}_{q^2}$ and $\gamma\in\mathbb{F}_{q}^\ast$. Then the polynomial
$$f(x)=(x^q-x+\delta)^{4 q+2}+\gamma(x^q+x),$$
is a PP of $\mathbb{F}_{q^2}$ if and only if $q\equiv0\pmod5$ and $\Tr_q^{q^2}(\delta)\neq0$.
\end{thm}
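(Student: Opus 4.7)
The plan is to follow the same strategy as in Theorems \ref{Liu-Jiang-Zou1}--\ref{Liu-Jiang-Zou3}: reduce the question to a single univariate permutation problem via Lemma \ref{lem-n=2}, and then appeal to the classification of degree-$5$ permutation polynomials in Table 7.1 of \cite{LN97}. First, I would fix a non-square $u\in\mathbb{F}_q$ and an element $\alpha\in\mathbb{F}_{q^2}$ with $\alpha^2=u$, so that $\{1,\alpha\}$ is a basis of $\mathbb{F}_{q^2}$ over $\mathbb{F}_q$ and $\alpha^q=-\alpha$. Writing $\delta=a+b\alpha$ and performing the substitution $x=y-(z-b)\alpha/2$ with $a,b,y,z\in\mathbb{F}_q$, a routine check (identical to the earlier setups) gives $x^q-x+\delta=a+z\alpha$ and $x^q+x=2y$, so $f(x)=(a+z\alpha)^{4q+2}+2\gamma y$ with $2a=\Tr_q^{q^2}(\delta)$.

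Using $(a+z\alpha)^q=a-z\alpha$, I would expand
$$(a+z\alpha)^{4q+2}=(a-z\alpha)^{4}(a+z\alpha)^{2}=(a^{2}-uz^{2})^{2}(a-z\alpha)^{2},$$
and then use $(a-z\alpha)^{2}=(a^{2}+uz^{2})-2az\alpha$. Reading off the $\mathbb{F}_q$-coordinates yields
$$g_1(y,z)=2\gamma y+(a^{2}-uz^{2})^{2}(a^{2}+uz^{2}),\qquad g_2(y,z)=-2az(uz^{2}-a^{2})^{2}.$$
Because $g_2$ depends only on $z$ and the coefficient of $y$ in $g_1$ is $2\gamma\ne0$, Lemma \ref{lem-n=2} reduces the problem to deciding when $g_2(z)=-2az(uz^{2}-a^{2})^{2}$ permutes $\mathbb{F}_q$.

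If $a=0$ then $g_2\equiv0$ fails to be a PP, so the condition $\Tr_q^{q^2}(\delta)\ne0$ is necessary. Assuming $a\ne 0$, the normalized form of $g_2(z)$ is
$$z^{5}-\frac{2a^{2}}{u}z^{3}+\frac{a^{4}}{u^{2}}z\ =\ z\!\left(z^{2}-\frac{a^{2}}{u}\right)^{\!2},$$
whose coefficients satisfy $a_1=a_3^{2}/4$ with $a_3\ne 0$, and for which $-a_3/2=a^{2}/u$ is a non-square in $\mathbb{F}_q$ (since $u$ is). The main obstacle is to show, via Table 7.1 of \cite{LN97}, that this coefficient pattern produces a PP of $\mathbb{F}_q$ \emph{if and only if} $5\mid q$: the entries forcing $a_3=0$ contradict $a\ne 0$; the Dickson-type entry for $q\equiv\pm2\pmod 5$ demands $a_1=a_3^{2}/5$, which is incompatible with $a_1=a_3^{2}/4$ unless $\mathrm{char}=5$; and the sporadic $q\in\{9,13\}$ entries impose further relations on the middle and constant coefficients that fail here. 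Once this sub-analysis is carried out, it matches precisely the case-$(6)$-type condition already used in Theorem \ref{Licao2023th} and yields that $g_2$ permutes $\mathbb{F}_q$ iff $5\mid q$. Combining the two necessary conditions gives: $f(x)$ is a PP of $\mathbb{F}_{q^2}$ iff $5\mid q$ and $\Tr_q^{q^2}(\delta)\ne0$.
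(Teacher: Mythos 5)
Your proposal is correct and follows essentially the same route as the paper: the same basis $\{1,\alpha\}$ with $\alpha^2=u$, the same substitution $x=y-(z-b)\alpha/2$ yielding $x^q-x+\delta=a+z\alpha$ and $x^q+x=2y$, the same reduction via Lemma \ref{lem-n=2} to the univariate polynomial $g_2(z)=-2az(uz^2-a^2)^2$ with normalized form $z^5-\tfrac{2a^2}{u}z^3+\tfrac{a^4}{u^2}z$, and the same conclusion from Table 7.1 of \cite{LN97} (the paper simply defers this last step to the identical analysis in Theorem \ref{Liu-Jiang-Zou3}). Your explicit ruling-out of the $q\equiv\pm2\pmod 5$ and sporadic entries is a correct elaboration of what the paper leaves implicit.
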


 {\bf Remark:}  For $q=3^m$, Theorems \ref{Liu-Jiang-Zou1} and \ref{Liu-Jiang-Zou2} in this work reduce to Theorem 1 and 2 in \cite{Liu-Jiang-Zou2025}, respectively. For $q=5^m$, Theorems \ref{Liu-Jiang-Zou3} and \ref{Liu-Jiang-Zou4} reduce to Theorem 3 and 4 in \cite{Liu-Jiang-Zou2025}. Due to the similarity in the argument, detailed proofs of Theorems \ref{Liu-Jiang-Zou2}-\ref{Liu-Jiang-Zou4} are omitted here.

\begin{thm}\label{Liu-Jiang-Zou5}
Let $q$ be an odd prime power,  $\delta\in\mathbb{F}_{q^2}$ and $\gamma\in\mathbb{F}_{q}^\ast$. Then the polynomial
$$f(x)=(x^q-x+\delta)^{ q+3}+(x^q-x+\delta)^{ q+2}+\gamma(x^q+x),$$
is a PP of $\mathbb{F}_{q^2}$ if and only if
 $q\equiv0\pmod3$ and $\Tr_q^{q^2}(\delta)\neq2$, or $q\equiv2\pmod3$ and $\Tr_q^{q^2}(\delta)=0$.
\end{thm}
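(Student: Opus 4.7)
The plan is to follow the same framework employed in Theorems \ref{Wuyuan}--\ref{Liu-Jiang-Zou4}: fix a non-square $u\in\mathbb{F}_q$, let $\alpha\in\mathbb{F}_{q^2}$ with $\alpha^2=u$, so $\{1,\alpha\}$ is a basis of $\mathbb{F}_{q^2}$ over $\mathbb{F}_q$ and $\alpha^q=-\alpha$. Writing $\delta=a+b\alpha$ and $x=y-(z-b)\alpha/2$ with $a,b,y,z\in\mathbb{F}_q$, I get $x^q-x+\delta=a+z\alpha$, $x^q+x=2y$, and $2a=\Tr_q^{q^2}(\delta)$. I would then expand $(a+z\alpha)^{q+3}=(a-z\alpha)(a+z\alpha)^3$ and $(a+z\alpha)^{q+2}=(a-z\alpha)(a+z\alpha)^2$ using the norm identity $(a+z\alpha)(a-z\alpha)=a^2-uz^2$, summing to obtain $f(x)=g_1(y,z)+g_2(y,z)\alpha+(\text{const})$, where
\[
g_1(y,z)=2\gamma y-u^2z^4-auz^2,\qquad g_2(y,z)=(2a+1)(a^2z-uz^3).
\]

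By Lemma \ref{lem-n=2}, $f$ is a PP of $\mathbb{F}_{q^2}$ iff $(g_1,g_2)$ permutes $\mathbb{F}_q^2$. Since $g_2$ depends only on $z$ and the $y$-part of $g_1$ is $2\gamma y$ with $2\gamma\ne0$, this reduces (exactly as in earlier proofs) to the question of when the univariate polynomial $g_2(z)=-(2a+1)(uz^3-a^2z)$ permutes $\mathbb{F}_q$. Note first that if $2a+1=0$, then $g_2\equiv 0$ is not a PP; in characteristic $3$ this excluded value corresponds to $a=1$, i.e.\ $\Tr_q^{q^2}(\delta)=2$, which is precisely the value forbidden in part of the statement.

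With $2a+1\ne 0$, I would normalize $g_2$ to the form $z^3-(a^2/u)z$ and invoke Table 7.1 of \cite{LN97} exactly as in Theorem \ref{Wuyuan}. The three cases for $q\pmod 3$ are then handled separately: (a) if $3\mid q$ (so $g_2$ is a linearized polynomial), PP is equivalent to $uz^2=a^2$ having no nonzero solution, which holds automatically because $u$ is a non-square and $a^2$ is a square (true whether $a=0$ or not); combined with $2a+1\ne 0$ this yields the condition $\Tr_q^{q^2}(\delta)\ne 2$; (b) if $q\equiv 2\pmod 3$, Table 7.1 forces the linear coefficient to vanish, i.e.\ $a^2/u=0$, giving $a=0$ and hence $\Tr_q^{q^2}(\delta)=0$; (c) if $q\equiv 1\pmod 3$, no such cubic is a PP and $f$ is never a PP, matching the exclusion of this congruence class from the statement.

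No single step looks hard; the main thing to watch is the characteristic-$3$ arithmetic in step (a), where the identity $-(2a+1)=a-1$ (since $-2=1$) explains why the excluded case $2a+1=0$ becomes $a=1$, i.e.\ $\Tr_q^{q^2}(\delta)=2$, rather than some square-ness condition on $a$. Once this is unpacked and Dickson's table is applied case by case, the stated characterization follows immediately, completing the proof.
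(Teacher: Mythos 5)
Your proposal matches the paper's own proof essentially step for step: the same basis $\{1,\alpha\}$ with $\alpha^2=u$, the same substitution $x=y-(z-b)\alpha/2$ yielding $g_1(y,z)=2\gamma y-u^2z^4-auz^2$ and $g_2(y,z)=(2a+1)(a^2z-uz^3)$, the same reduction to the univariate question of when $g_2(z)$ permutes $\mathbb{F}_q$, and the same case analysis via Table 7.1 of \cite{LN97}, including the observation that $2a+1=0$ forces $g_2\equiv0$ and in characteristic $3$ corresponds exactly to $\Tr_q^{q^2}(\delta)=2$. The argument is correct and requires no changes.
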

\begin{proof}
Since $q$ is an odd prime power,  there is a non-square element $u\in\mathbb{F}_{q}$. Let $\alpha$ be an element of $\mathbb{F}_{q^2}$ with $\alpha^2=u$. Then $\{1, \alpha\}$ is a basis of $\mathbb{F}_{q^2}$ over $\mathbb{F}_q$. Let $\delta=a+b\alpha$ with $a, b\in\mathbb{F}_q$ and $x=y-(z-b)\alpha/2$. Then $2a=\Tr_q^{q^2}(\delta)$ and $x^q-x+\delta=a+z\alpha$. We have
\begin{align*}
f(x)=f(y-(z-b)\alpha/2)&=(a^2-z^2u)(a+z\alpha)^2+(a^2-z^2u)(a+z\alpha)+2\gamma y\\
   &=2\gamma y-u^2z^4-auz^2+a^4+a^3+(2a+1)(-uz^3+a^2z)\alpha.
\end{align*}
Hence
\begin{equation*}
\begin{cases}
g_1(y, z)=2\gamma y-u^2z^4-auz^2,\\
g_2(y, z)=(2a+1)(-uz^3+a^2z).
\end{cases}
\end{equation*}
Observe that $g_2(y, z)$ is a polynomial of $z$ and the $y$-part of $g_1(y, z)$ is $2\gamma y$. Hence $(g_1(y,z),g_2(y,z))$ permutes $\mathbb{F}_q^2$ if and only if $g_2(y, z)=g_2(z)$ is a PP over $\mathbb{F}_q$. If $2a+1=0$, $g_2(z)=0$ for all $z\in\mathbb{F}_q$, which is clearly not a PP. We therefore assume that $2a+1\neq0$. Then the normalized form of $g_2(z)$ is as follows
$$z^3-\frac{a^2}{u}z.$$
From Table 7.1 in \cite{LN97}, the polynomial $g_2(z)$ is a PP over $\mathbb{F}_q$ if and only if $q\not\equiv1\pmod{3}$ and
 $a=0$, or $3\mid q$ and $a\neq0$. Combining with $2a+1\ne0$, we obtain that $f(x)$ is a PP of $\mathbb{F}_{q^2}$ if and only if $q\not\equiv1\pmod{3}$ and $\Tr_q^{q^2}(\delta)=0$, or $3\mid q$ and
 $\Tr_q^{q^2}(\delta)\ne0$, $\Tr_q^{q^2}(\delta)+1\ne0$ (i.e,  $\Tr_q^{q^2}(\delta)\not\in\{0,2\}$). We complete the proof.\end{proof}


\begin{thm}\label{Liu-Jiang-Zou6}
Let $q$ be an odd prime power, $\delta\in\mathbb{F}_{q^2}$ and $\gamma\in\mathbb{F}_{q}^\ast$. Then the polynomial
$$f(x)=(x^q-x+\delta)^{ 3 q+2}+(x^q-x+\delta)^{4 q+2}+\gamma(x^q+x), $$
is a PP of $\mathbb{F}_{q^2}$ if and only if
$q\equiv0\pmod5$ and $\Tr_q^{q^2}(\delta)\neq-1$, or $q\equiv2,3,4\pmod5$ and $\Tr_q^{q^2}(\delta)=0$.
\end{thm}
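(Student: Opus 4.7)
The plan is to follow the framework employed throughout Section 3, modelled most directly on the proof of Theorem \ref{Liu-Jiang-Zou5}. Fix a non-square $u\in\mathbb{F}_q$, set $\alpha^2=u$ so that $\{1,\alpha\}$ is an $\mathbb{F}_q$-basis of $\mathbb{F}_{q^2}$, write $\delta=a+b\alpha$ and $x=y-(z-b)\alpha/2$ with $a,b,y,z\in\mathbb{F}_q$. Then $2a=\Tr_q^{q^2}(\delta)$, $x^q-x+\delta=a+z\alpha$ and $x^q+x=2y$. Using $(a+z\alpha)^q=a-z\alpha$, I would compute
\[
(a+z\alpha)^{3q+2}=(a-z\alpha)(a^2-uz^2)^2,\qquad (a+z\alpha)^{4q+2}=(a-z\alpha)^2(a^2-uz^2)^2,
\]
whose sum factors as $(a^2-uz^2)^2\bigl[(a^2+a+uz^2)-(2a+1)z\alpha\bigr]$. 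Separating the $1$- and $\alpha$-components and adding $\gamma(x^q+x)=2\gamma y$ then yields
\[
g_1(y,z)=2\gamma y+(a^2-uz^2)^2(a^2+a+uz^2),\qquad g_2(y,z)=-(2a+1)z(a^2-uz^2)^2.
\]

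Since $g_2$ depends only on $z$ and the $y$-coefficient in $g_1$ is $2\gamma\ne 0$, Lemma \ref{lem-n=2} reduces the problem to deciding when the univariate polynomial $g_2(z)$ permutes $\mathbb{F}_q$. If $2a+1=0$, i.e.\ $\Tr_q^{q^2}(\delta)=-1$, then $g_2\equiv 0$ and $f$ fails to be a PP. Otherwise, dividing by the non-zero constant $-(2a+1)u^2$ brings $g_2(z)$ into the normalized form
\[
P(z)\;=\;z^5-\frac{2a^2}{u}\,z^3+\frac{a^4}{u^2}\,z\;=\;z\!\left(z^2-\frac{a^2}{u}\right)^{\!2}.
\]

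The heart of the argument---and the main obstacle---is characterizing exactly when $P(z)$ is a PP of $\mathbb{F}_q$. When $a=0$ this collapses to $P(z)=z^5$ (up to a non-zero scalar), which permutes $\mathbb{F}_q$ iff $\gcd(5,q-1)=1$, i.e., $q\equiv 0,2,3,4\pmod 5$; this accounts for the $a=0$ portion of conditions (i) and (ii). When $a\ne 0$ the element $a^2/u$ is a non-square of $\mathbb{F}_q$, so $z^2-a^2/u$ never vanishes on $\mathbb{F}_q$ and $P^{-1}(0)=\{0\}$. I would then invoke Table 7.1 of \cite{LN97} applied to the specific form $z^5+Az^3+Bz$ subject to the rigid constraint $B=A^2/4$, mirroring the case analysis carried out in the proof of Theorem \ref{Licao2023th}. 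The expected outcome is that in characteristic different from $5$ this constraint is incompatible with every PP sub-case of the table, a claim corroborated by explicit collisions such as $P(1)=P(2)$ over $\mathbb{F}_7$ with $\beta=a^2/u$ non-square; whereas in characteristic $5$ the polynomial $z(z^2-a^2/u)^2$ does permute $\mathbb{F}_q$ for every non-zero $a$ with $2a+1\ne 0$, as one checks at $q=5$ and propagates through the table. Combining these observations with the $2a+1=0$ exclusion already established reproduces precisely the two alternatives stated in the theorem.
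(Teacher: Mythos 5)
Your proposal is correct and follows essentially the same route as the paper: the same basis $\{1,\alpha\}$ with $\alpha^2=u$, the same substitution $x=y-(z-b)\alpha/2$, the reduction via Lemma \ref{lem-n=2} to the single univariate component $g_2(z)=-(2a+1)z(a^2-uz^2)^2$, the exclusion $2a+1=0$, and the normalized quintic $z^5-\tfrac{2a^2}{u}z^3+\tfrac{a^4}{u^2}z$ analyzed through Table 7.1 of \cite{LN97}. The only step you leave tentative (``the expected outcome is\dots'') is settled immediately by observing that your factored form $z(z^2-a^2/u)^2$ with $a\ne0$ is precisely the table entry $x^5-2ax^3+a^2x$ with $a$ a non-square, which is a PP exactly when $5\mid q$; this is what the paper invokes, and it closes your argument without needing the ad hoc $\mathbb{F}_7$ collision (which, incidentally, occurs only for the non-square $\beta=5$, not for all non-squares).
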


 {\bf Remark:}  When $q=5^m$, Theorems \ref{Liu-Jiang-Zou5} and \ref{Liu-Jiang-Zou6} yield Theorems 5 and 6 in \cite{Liu-Jiang-Zou2025}, respectively. The proof of Theorem \ref{Liu-Jiang-Zou6} has been omitted herein.

\subsection{Even characteristic}

By the end of this section, we propose a class of PPs over $\mathbb{F}_{2^{2m}}$, thereby demonstrating that our method remains valid in the case of even characteristic.
Xu, Luo and Cao gave the necessary and sufficient conditions for $f(x)=(x^{2^m}+x+\delta)^{2^{m+1}+1}+x$ over $\mathbb{F}_{2^{2m}}$ to be PPs in \cite[Theorem 3.2]{XuLC22}. We now extend their result to the following general form.

\begin{thm}
 Let $m$ be a positive integer and $q=2^m$.  For $\delta,\gamma\in\mathbb{F}_{q^2}$ with $\gamma\neq0$,  the polynomial
$$f(x)=(x^{q}+x+\delta)^{2q+1}+\gamma x, $$
is a PP of $\mathbb{F}_{q^2}$ if and only if one of the following holds:
\begin{enumerate}
\item[{\rm (i)}]  $\gamma\in\mathbb{F}_{q}^*$,  and $b=0$ or $b^2=\gamma$;

\item[{\rm (ii)}]  $\gamma\in\mathbb{F}_{q^2}^*\backslash\mathbb{F}_{q}$, $c=b^2u+b^2+du=0$ and $m$ is odd;
\item[{\rm (iii)}] $\gamma\in\mathbb{F}_{q^2}^*\backslash\mathbb{F}_{q}$,
$c\neq0$, $b^2c^2+d\left(b^2(c+d+du)+c^2+cd+d^2u\right)=0$ and $m$ is odd;
\end{enumerate}
where $u\in\mathbb{F}_q$ satisfies  $\Tr_2^{q}(u)=1$, and $a,b,c,d\in\mathbb{F}_q$ satisfy $\Tr_{q}^{q^2}(\delta)=b$, $\Tr_{q}^{q^2}(\gamma)=d$, $\N_{q}^{q^2}(\delta)=a^2+ab+b^2u$ and $\N_{q}^{q^2}(\gamma)=c^2+cd+d^2u$.
\end{thm}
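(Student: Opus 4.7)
The plan is to apply Lemma~\ref{lem-n=2} with a basis adapted to characteristic~2. Since $\Tr_2^q(u)=1$, the polynomial $x^2+x+u$ is irreducible over $\mathbb{F}_q$; letting $\alpha$ be a root gives a basis $\{1,\alpha\}$ of $\mathbb{F}_{q^2}$ over $\mathbb{F}_q$ with $\alpha^q=\alpha+1$ and $\alpha^2=\alpha+u$. Writing $\delta=a+b\alpha$, $\gamma=c+d\alpha$, and substituting $x=y+(z+a)\alpha$ with $y,z\in\mathbb{F}_q$, one checks $x^q+x+\delta=z+b\alpha$, so after expanding $(z+b\alpha)^{2q+1}=(z+b\alpha)^{2q}(z+b\alpha)$ and adding $\gamma x$, separating the coefficients of $1$ and $\alpha$ yields, up to additive constants,
\[
g_1(y,z)=cy+z^3+(b^2+b^2u+du)z,\qquad g_2(y,z)=dy+bz^2+(b^2+c+d)z.
\]
By Lemma~\ref{lem-n=2}, $f$ is a PP of $\mathbb{F}_{q^2}$ if and only if $(g_1,g_2)$ permutes $\mathbb{F}_q^2$.

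I then split according to $\gamma$. If $\gamma\in\mathbb{F}_q^*$ (so $d=0$, $c\ne0$), then $g_1$ is linear in $y$ with coefficient $c$ while $g_2(y,z)=bz^2+(b^2+c)z$ depends only on $z$, so the question reduces to whether $g_2(z)$ permutes $\mathbb{F}_q$. Factoring $g_2(z_1)+g_2(z_2)=(z_1+z_2)\bigl(b(z_1+z_2)+b^2+c\bigr)$ yields $b=0$ or $b^2+c=0$, which is (i). If $c=0$ and $d\ne0$, the roles swap: $g_1$ depends only on $z$, and the characteristic-$2$ identity $z^3+Az=z(z+\sqrt{A})^2$ shows $g_1$ is a PP iff $b^2+b^2u+du=0$ and $z^3$ permutes $\mathbb{F}_q$, i.e.\ $m$ is odd. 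This is (ii).

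The substantive case is $cd\ne0$. Here I apply the invertible substitution $(g_1,g_2)\mapsto(g_1,\,cg_2+dg_1)$, whose second component
\[
\phi(z):=cg_2+dg_1=dz^3+bcz^2+Kz,\qquad K=cb^2+c^2+cd+db^2+db^2u+d^2u,
\]
depends only on $z$, so $(g_1,g_2)$ permutes $\mathbb{F}_q^2$ iff $\phi$ permutes $\mathbb{F}_q$. The shift $z=w+bc/d$ kills the $z^2$-term in characteristic~2 and reduces $\phi/d$ to $w^3+Aw+B$ with $A=(b^2c^2+dK)/d^2$. The central technical claim is that $w^3+Aw+B$ permutes $\mathbb{F}_{2^m}$ iff $A=0$ and $m$ is odd; I would establish this by factoring
\[
\psi(w_1)+\psi(w_2)=(w_1+w_2)\bigl((w_1+w_2)^2+w_1w_2+A\bigr)
\]
and, for each $s=w_1+w_2\ne0$, invoking the standard criterion that $t^2+st+p$ splits over $\mathbb{F}_q$ iff $\Tr_2^q(p/s^2)=0$. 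Combined with $\Tr_2^q(1)\equiv m\pmod{2}$, this forces non-injectivity whenever $A\ne0$ or whenever $m$ is even, while $A=0$ and $m$ odd gives $\psi=w^3+B$, which is a PP. Setting $A=0$ yields $b^2c^2+dK=0$, and expanding $dK$ shows this equals $b^2c^2+d\bigl(b^2(c+d+du)+c^2+cd+d^2u\bigr)$, the equation in~(iii).

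The main obstacle is the characterisation of the reduced cubic $w^3+Aw+B$ as a PP of $\mathbb{F}_{2^m}$; everything else is careful algebraic bookkeeping, in particular matching the coefficient $K$ with the form stated in (iii) and verifying the trace/norm identities $\Tr_q^{q^2}(\delta)=b$, $\N_q^{q^2}(\delta)=a^2+ab+b^2u$ (and analogously for $\gamma$) from $\alpha^q=\alpha+1$ and $\alpha^2=\alpha+u$.
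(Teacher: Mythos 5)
Your proposal is correct and follows essentially the same route as the paper: the same basis $\{1,\alpha\}$ with $\alpha^2+\alpha+u=0$, the same substitution $x=y+(z+a)\alpha$ yielding the identical pair $(g_1,g_2)$, and the same three-way case split on $(c,d)$ with the linear combination $dg_1+cg_2$ in the mixed case. The only (harmless) deviation is that you prove the criterion for when $w^3+Aw+B$ permutes $\mathbb{F}_{2^m}$ directly via the quadratic-trace argument, where the paper simply cites Table 7.1 of Lidl--Niederreiter.
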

\begin{proof}
Let $u\in\mathbb{F}_{q}$ such that $\Tr_2^{q}(u)=1$. It follows that the polynomial $x^2+x+u$ is irreducible over $\mathbb{F}_{q}$. Now, let  $\alpha\in\mathbb{F}_{q^2}$ be a root of this polynomial. Since the polynomial is irreducible and of degree 2, the extension $\mathbb{F}_{q}(\alpha)$ has degree 2 over $\mathbb{F}_{q}$, and is thus equal  to $\mathbb{F}_{q^2}$. Consequently, $\{1,\alpha\}$ is a basis of $\mathbb{F}_{q^2}$ over $\mathbb{F}_{q}$.
Let $\delta=a+b\alpha$, $\gamma=c+d\alpha$ and $x=y+(z+a)\alpha$ with $a,b,c,d,y,z\in\mathbb{F}_{q}$. A direct calculation gives that $\alpha^{q}=1+\alpha$ and thus $x^q+x+\delta=z+b\alpha$. Hence, we have
\begin{align*}
f(x)=f(y+(z+a)\alpha)&=(z+b\alpha)^{2q+1}+(c+d\alpha)(y+(z+a)\alpha)\\
&=(z+b\alpha)(z+b(1+\alpha))^2+(c+d\alpha)(y+(z+a)\alpha)\\
&=cy+z^3+(b^2u+b^2+du)z+b^3u+adu\\
   &~~~~+\left(dy+bz^2+(b^2+c+d)z+b^3u+ac+ad\right)\alpha.
\end{align*}
Next it is sufficient to consider the permutation properties of $(g_1(y,z),g_2(y,z))$ listed below.
\begin{equation*}
\begin{cases}
g_1(y,z)=cy+z^3+(b^2u+b^2+du)z,\\
g_2(y,z)=dy+bz^2+(b^2+c+d)z.
\end{cases}
\end{equation*}
Since $\gamma\in\mathbb{F}_{q^2}^*$, it follows that $c$ and $d$ cannot both be zero. We therefore divide the analysis into three cases.

Case 1: Assume $c\neq0$ and $d=0$.  Observe that $g_2(y,z)=g_2(z)$ is a polynomial of $z$ and the $y$-part of $g_1(y,z)$ is $cy$. Hence $(g_1(y,z),g_2(y,z))$ permutes $\mathbb{F}_{q}^2$ if and only if $g_2(z)$ permutes $\mathbb{F}_{q}$. Note that
$g_2(z)=bz^2+(b^2+c)z$ permutes $\mathbb{F}_{q}$ if and only if $b=0$ or $b^2+c=0$.  It follows that $f(x)$ permutes $\mathbb{F}_{q^2}$ if and only if $b=0$ or $b^2= \gamma$.

Case 2: Assume $c=0$ and $d\neq0$. Then $g_1(y,z)=g_1(z)=z^3+(b^2u+b^2+du)z$, which depends only on $z$. By Table 7.1 in \cite{LN97}, $g_1(z)$ is a PP of $\mathbb{F}_{q}$ if and only if $q\not\equiv1\pmod3$ and $b^2u+b^2+du=0$ . Note that the $y$-part of $g_2(y,z)$ is $dy$. Therefore,  $(g_1(y,z),g_2(y,z))$ permutes $\mathbb{F}_{q}^2$
 if and only if $g_1(z)$ permutes $\mathbb{F}_{q}$, which in turn is equivalent to $q\not\equiv1\pmod3$ and $b^2u+b^2+du=0$. So $f(x)$ permutes $\mathbb{F}_{q^2}$ if and only if  $m$ is odd and $b^2u+b^2+du=0$.

Case 3: Assume $cd\neq0$. Observe the $y$-part of $g_1(y,z)$ and $g_2(y,z)$ are $cy$ and $dy$, respectively. Then $(g_1(y,z),g_2(y,z))$ permutes $\mathbb{F}_q^2$ if and only if
\begin{align*}
d^{-1}h(z)&=d^{-1}\left(d\cdot g_1(y,z)+c\cdot g_2(y,z)\right)\\
   &=d^{-1}\left(dz^3+bcz^2+b^2(c+d+du)z+(c^2+cd+d^2u)z\right)\\
   &=z^3+\frac{bc}{d}z^2+\frac{b^2(c+d+du)+c^2+cd+d^2u}{d}z
\end{align*}
permutes $\mathbb{F}_q$. By Table 7.1 in \cite{LN97}, the normalized form of $d^{-1}h(z)$ is
$$z^3+\left(\frac{b^2c^2}{d^2}+\frac{b^2(c+d+du)+c^2+cd+d^2u}{d}\right)z,$$
which induces a permutation of $\mathbb{F}_q$ if and only if $q\not\equiv1\pmod3$ and $\frac{b^2c^2}{d^2}+\frac{b^2(c+d+du)+c^2+cd+d^2u}{d}=0$ . That is, $m$ is odd and $b^2c^2+d\left(b^2(c+d+du)+c^2+cd+d^2u\right)=0$.
\end{proof}

 {\bf Remark:} Indeed, when $d=0$ (i.e., $\gamma=c\in\mathbb{F}_{q}$), the same result can also be derived using the commutative diagram below:
\begin{center}
 \quad
\xymatrix{
  \mathbb{F}_{q^2} \ar[d]_{\Tr_q^{q^2}(x)} \ar[r]^{f} & \mathbb{F}_{q^2} \ar[d]_{\Tr_q^{q^2}(x)}  \\
  \mathbb{F}_{q} \ar[r]^{h(x)} & \mathbb{F}_{q}   }\end{center}
where $h(x)=\Tr_{q}^{q^2}(\delta)x^2+\left(\left(\Tr_{q}^{q^2}(\delta)\right)^2+c\right)x+\delta^{q+1}\Tr_{q}^{q^2}(\delta)$, which permutes $\mathbb{F}_{{2^m}}$ if and only if $h'(x)=\Tr_{q}^{q^2}(\delta)x^2+\left(\left(\Tr_{q}^{{q^2}}(\delta)\right)^2+c\right)x$ permutes $\mathbb{F}_{2^m}$.

%
%
%

\section{Another method for constructing PPs over  $\mathbb{F}_{q^d}$}
At this point, it would seem appropriate to further construct some classes of PPs over $\mathbb{F}_{q^d}$ for $d\geq3$ using our approach. However, we defer this task to a future paper, where we will find a more optimal basis for the extension $\mathbb{F}_{q^d}/\mathbb{F}_q$. This section notes that for polynomials with certain special forms, other methods may also be applicable.

Recently, Jiang, Li and Qu  investigated PPs of the form $x+\gamma\Tr_q^{q^3}(h(x))$  over finite fields with even characteristic in \cite{LiKQ2026}. Inspired by their work,  we recover and extend their Theorem 3.3, as stated below.

\begin{thm}
Let $q=2^m$ with $m>1$. Let $d$ be an odd positive integer and  $\gamma\in\mathbb{F}_{q^d}$. Then the polynomial
$$f(x)=x+\gamma\Tr_{q}^{q^d}(x^{q+1}+x^{2q+2}),$$
is a PP of $\mathbb{F}_{q^d}$ if and only if $\gamma \in \mathbb{F}_q$ and  $\gamma t^3+\gamma t+1=0$ has no solution in $\mathbb{F}_q$.
\end{thm}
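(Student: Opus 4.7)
The plan is to work directly with collisions $f(x)=f(y)$ rather than via the AGW criterion. Writing $c(x)=\Tr_q^{q^d}(x^{q+1}+x^{2q+2})\in\mathbb{F}_q$, so that $f(x)=x+\gamma c(x)$, the identity $f(x)=f(y)$ is equivalent in characteristic two to $x+y=\gamma(c(x)+c(y))$. The analysis then splits naturally on whether $\gamma\in\mathbb{F}_q$.

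Suppose first $\gamma\in\mathbb{F}_q$. Then $x+y\in\mathbb{F}_q$, so any collision has the form $y=x+a$ with $a\in\mathbb{F}_q$, and I would expand $(x+a)^{q+1}$ and $(x+a)^{2q+2}$ and apply the trace. The linear-in-$x$ contributions $a\Tr_q^{q^d}(x^q)+a\Tr_q^{q^d}(x)$ cancel since $\Tr_q^{q^d}(x^q)=\Tr_q^{q^d}(x)$, and the quadratic contributions $a^2\Tr_q^{q^d}(x^{2q})+a^2\Tr_q^{q^d}(x^2)$ cancel likewise, leaving $c(x+a)+c(x)=da^2+da^4=a^2+a^4$ (using $d$ odd in characteristic two). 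Hence a nontrivial collision exists iff $a=\gamma(a^2+a^4)$ for some $a\in\mathbb{F}_q^*$, which after division by $a$ is precisely $\gamma a^3+\gamma a+1=0$, giving the stated criterion in this case.

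For $\gamma\notin\mathbb{F}_q$, I would produce a collision $f(x)=f(x+\gamma a)$ with some $a\in\mathbb{F}_q^*$. The same expansion with shift $\gamma a$, together with $\Tr_q^{q^d}(\alpha^2)=\Tr_q^{q^d}(\alpha)^2$, yields
\[
c(x+\gamma a)+c(x)=aL(x)+a^2L(x)^2+a^2N+a^4N^2,
\]
where $L(x)=\Tr_q^{q^d}\bigl((\gamma^{q^{d-1}}+\gamma^q)x\bigr)$ and $N=\Tr_q^{q^d}(\gamma^{q+1})$. The pivotal point, using $d$ odd, is that $L\equiv 0$ forces $\gamma^{q^{d-1}}=\gamma^q$, hence $\gamma^{q^{d-2}}=\gamma$ (cancelling a $q$-th power since $\gcd(q,q^d-1)=1$), whose fixed set in $\mathbb{F}_{q^d}$ is $\mathbb{F}_{q^{\gcd(d,d-2)}}=\mathbb{F}_{q^{\gcd(d,2)}}=\mathbb{F}_q$ for $d$ odd, contradicting $\gamma\notin\mathbb{F}_q$; consequently $L$ is surjective onto $\mathbb{F}_q$. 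Setting $\ell=L(x)$, the collision equation $c(x+\gamma a)+c(x)=a$ rewrites as the quadratic $a\ell^2+\ell+(a^3N^2+aN+1)=0$ in $\ell\in\mathbb{F}_q$, which has a root iff $\Tr_2^q(a^4N^2+a^2N+a)=0$. Since $a\mapsto a^4N^2+a^2N+a$ is $\mathbb{F}_2$-linear on $\mathbb{F}_q$, so is $a\mapsto\Tr_2^q(a^4N^2+a^2N+a)$, and its kernel has $\mathbb{F}_2$-dimension at least $m-1\ge 1$ (using $m>1$); hence I can extract a nonzero $a\in\mathbb{F}_q^*$ for which a suitable $\ell$ exists, then a witness $x$ with $L(x)=\ell$, producing the desired collision. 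The main obstacle I anticipate is the careful bookkeeping in computing $c(x+\gamma a)+c(x)$ and correctly pinpointing $L\equiv 0\iff\gamma\in\mathbb{F}_q$ via the $d$-odd hypothesis; the concluding $\mathbb{F}_2$-linearity argument is then clean.
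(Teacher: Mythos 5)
Your proposal is correct and follows essentially the same route as the paper: both analyze $f(x+w)+f(x)=0$, reduce it to a quadratic in the linear form $\Tr_q^{q^d}\bigl((\gamma^q+\gamma^{q^{d-1}})x\bigr)$ (nonzero for $\gamma\notin\mathbb{F}_q$ precisely because $\gcd(2,d)=1$), and arrive at $\gamma a^3+\gamma a+1=0$ when $\gamma\in\mathbb{F}_q$. The only divergence is cosmetic: where you locate a suitable $a\in\mathbb{F}_q^*$ via the $\Tr_2^q$ solvability criterion for the Artin--Schreier quadratic together with an $\mathbb{F}_2$-dimension count (using $m>1$), the paper simply exhibits the explicit choice $a=(b^2+b)\gamma$ with the explicit root $y=\Tr_q^{q^d}(a^{q+1})+b$.
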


\begin{proof}
By definition, a polynomial $f(x)$ permutes $\mathbb{F}_q$ if and only if, for every $a\in\mathbb{F}_{q^d}^*$, $f(x+a)+f(x)=0$ has no solution in $\mathbb{F}_{q^d}.$ We note that for $\gamma=0$, $f(x)=x$ is clearly a PP over $\mathbb{F}_{q^d}$. Now assume $\gamma\neq0$. Then we have
$$f(x+a)+f(x)=a+\gamma\Tr_q^{q^d}((x+a)^{q+1}+(x+a)^{2q+2})+\gamma\Tr_{q}^{q^d}(x^{q+1}+x^{2q+2})=0,$$
or, equivalently,
\begin{equation*}
\left(\Tr_q^{q^d}((a^q+a^{q^{d-1}})x)\right)^2+\Tr_q^{q^d}((a^q+a^{q^{d-1}})x)+\Tr_q^{q^d}(a^{q+1}+a^{2q+2})+\frac{a}{\gamma}=0.\tag{5}\label{eq5}
\end{equation*}
This implies that $\frac{a}{\gamma}\in\mathbb{F}_q$. If $\gamma\in\mathbb{F}_q^*$, then  $a\in\mathbb{F}_q$. So equation (\ref{eq5}) becomes $a^4+a^2+\frac{a}{\gamma}=0$ since $2\nmid d$. Consequently, equation (\ref{eq5})  has no solution in $\mathbb{F}_{q^d}$ if and only if $\gamma a^3+\gamma a+1=0$ has no solution in $\mathbb{F}_{q}$.  If $\gamma\in\mathbb{F}_{q^d}^*\backslash\mathbb{F}_q$, it follows immediately that $a\not\in\mathbb{F}_q$. Let $y=\Tr_q^{q^d}((a^q+a^{q^{d-1}})x)$. We now claim that $y\not\equiv0$. Then equation (\ref{eq5}) has no solution in $\mathbb{F}_{q^d}$ if and only if
\begin{equation*}
y^2+y+\Tr_q^{q^d}(a^{q+1})+\left(\Tr_q^{q^d}(a^{q+1})\right)^2+\frac{a}{\gamma}=0\tag{6}\label{eq6}
\end{equation*}
has no solution in $\mathbb{F}_q$.  Choose $b\in\mathbb{F}_q^*$ such that $a=(b^2+b)\gamma$. Substituting  into \eqref{eq6} and simplifying,  one may verify that $y=\Tr_q^{q^d}(a^{q+1})+b$ satisfies the equation. Therefore, there exist $a\in\mathbb{F}_{q^d}^*$ such that (\ref{eq6}) has a solution in $\mathbb{F}_q$, and thus (5) also admits a solution in $\mathbb{F}_{q^d}$. This contradicts the permutation property of $f(x)$. Hence, for each $\gamma\in\mathbb{F}_{q^d}^*\backslash\mathbb{F}_q$, $f(x)$ cannot be a PP over $\mathbb{F}_{q^d}$.

{\em proof of the claim:} To prove  $y\not\equiv0$, it suffices to show
$a^q+a^{q^{d-1}}\neq0$. Assume for contradiction that $a^q+a^{q^{d-1}}=0$. Raising both sides to the $q$-th power gives $a^{q^2}+a=0$, so $a^{q^2-1}=1$. On the other hand, we have $a^{q^d-1}=1$ as $a\in\mathbb{F}_{q^d}^*$. Using the identity
$$\gcd(q^2-1,q^d-1)=q^{\gcd(2,d)}-1=q-1,$$
we obtain $a^{q-1}=1$, and thus $a\in\mathbb{F}_q$, a contradiction.
\end{proof}


\section{Summary and concluding remarks}
Undoubtedly, the construction of permutation polynomials over finite fields stands a major research focus in the algebraic domain. In recent years, a large number of  permutation polynomials have been constructed using the AGW criterion \cite{AGW}, a powerful and widely applicable method. In this paper, we dedicate to applying some new  approaches to constructing permutation polynomials. In Section 2,  we demonstrate that the permutation behavior of a univariate polynomial can be translated to that of its corresponding multivariate polynomial system, and vice versa. As one application of our method, we propose a necessary and sufficient condition for polynomials of the form $x+g(\Tr_{q}^{q^n}(x))$ over finite fields to be  permutations at the end of this section.

Note that permutation polynomials of the form $\sum_i(x^q-x+\delta)^{s_i}+L(x)$ have been extensively studied. In Section 3, we extend some of known results using our proposed approach. Many of the findings presented in this paper are therefore generalizations of earlier results on permutation polynomials, as referenced throughout. For the reader's convenience, Table 1 offers a systematic summary of permutation polynomials over $\mathbb{F}_{q^2}$ in the case of odd characteristic. In Section 4, we  present an alternative approach for constructing permutation polynomials with certain special form.

We believe that more new permutation polynomials can be obtained through the ideas presented in this paper. While the present work primarily addresses polynomials over $\mathbb{F}_{q^2}$, the approach can be naturally extended to higher-degree extensions. Notably, finding an optimal basis is vital for this generalization. Our future work will therefore be dedicated to studying permutation polynomials over  higher extensions.


Finally, we would like to emphasize that  studying the polynomials of the form $f(x)+\gamma x$ is highly meaningful, as such forms are closely connected to the concept of direction sets \cite{Ball,Blokhuis-Ball-Brouwer-Storme-Szonyi}. For a mapping $f :\mathbb{F}_{q^n}\to \mathbb{F}_{q^n}$,  its  direction set is  defined as $$D(f) :=\left\{ \frac{ f(x)-f(y)}{x-y}| x, y\in\mathbb{F}_q, \,\, x\ne y\right\}.$$
One may observe that determining the size of $D(f)$  is equivalent to characterizing the set
$$P(f):= \{ \gamma\in\mathbb{F}_{q^n}|  f(x) + \gamma x \,\, \mbox{ is a permutation of}\,\,\mathbb{F}_{q^n}\}.$$
More precisely: If $\left(f(x)-f(y)\right)/(x-y)=m$, then $f(x)-mx=f(y)-my$, so $m$ is a direction determined by $f(x)$ precisely when $f(x)-mx$ is not a permutation polynomial over $\mathbb{F}_{q^n}$. In other words, $m\in D(f)$ if and only if $-m\not\in P(f)$.

\bigskip

\begin{table}[htbp]
  \centering
  \caption{PPs  of the form $\sum_i(x^q-x+\delta)^{s_i}+\gamma L(x)$ \\
  over $\mathbb{F}_{q^2}$ with odd characteristic in this paper}\label{table1}
  \setlength{\tabcolsep}{3pt}    
  \renewcommand{\arraystretch}{0.9}  
  \begin{tabular}{c|cc}
    \toprule
  $\gamma$ & $f(x)$ & {\rm Rf} \\
  \hline
 \multirow{5}{*}{$\gamma\in\mathbb{F}_{q^2}^*$ } &$(x^q-x+\delta)^{q+2}+\gamma x$ &Theorem \ref{Wuyuan}  \\
 &$(x^q-x+\delta)^{2}+\gamma x$   &Theorem \ref{Li_2} \\
  &$(x^q-x+\delta)^{2q}+\gamma x$   &Theorem \ref{Li_2q} \\
 &$(x^q-x+\delta)^{q+2}+(x^q-x+\delta)^{2q+1}+\gamma x$    &Theorem \ref{q+2_2q+1} \\
&$(x^q-x+\delta)^{q+4}+(x^q-x+\delta)^{5}+\gamma x$  &Theorem \ref{degree5}\\
  \hline
\multirow{13}{*}{$\gamma\in\mathbb{F}_{q}^*$ } &$(x^q-x+\delta)^{2q+1}+(x^q-x+\delta)^{3q+2}+\gamma x$ & Theorem \ref{Licao2023th}\\
 & $(x^q-x+\delta)^{2q+3}+(x^q-x+\delta)^{2q}+\gamma x$ &Theorem \ref{Appx.1}\\
 &$(x^q-x+\delta)^{2q+4}+(x^q-x+\delta)^{q+5}+\gamma x$  &Theorem \ref{Appx.2} \\
& $(x^q-x+\delta)^{2q+4}+(x^q-x+\delta)^{q}+\gamma x$ &Theorem \ref{Appx.3}\\
 & $(x^q-x+\delta)^{2q+3}+(x^q-x+\delta)^{5q}+\gamma x$ &Theorem \ref{Appx.4}\\
& $(x^q-x+\delta)^{2q+4}+(x^q-x+\delta)^{2q}+\gamma x$  &Theorem \ref{Appx.5}\\
 & $(x^q-x+\delta)^{ q+5}+(x^q-x+\delta)^{2q}+\gamma x$  &Theorem \ref{Appx.6}\\
 &$(x^q-x+\delta)^{ q+p^i}+\gamma(x^q+x)$ &Theorem \ref{Liu-Jiang-Zou1}\\
 &$(x^q-x+\delta)^{ q+2}+\gamma(x^q+x)$ &Theorem \ref{Liu-Jiang-Zou2} \\
 &$(x^q-x+\delta)^{ 3q+2}+\gamma(x^q+x)$ &Theorem \ref{Liu-Jiang-Zou3}\\
 & $(x^q-x+\delta)^{ 4q+2}+\gamma(x^q+x)$ &Theorem \ref{Liu-Jiang-Zou4} \\
 & $(x^q-x+\delta)^{q+3}+(x^q-x+\delta)^{q+2}+\gamma(x^q+x)$ &Theorem \ref{Liu-Jiang-Zou5}\\
 & $(x^q-x+\delta)^{3q+2}+(x^q-x+\delta)^{4q+2}+\gamma(x^q+x)$ &Theorem \ref{Liu-Jiang-Zou6}\\
\bottomrule
\end{tabular}
\end{table}

\section*{Declarations}
\begin{conflict of interest} {\rm There is no conflict of interest.}
\end{conflict of interest}

\section*{Appendix}

 {\bf Proof of Theorem \ref{Appx.1}~} Since $q$ is an odd prime power,  there is a non-square element $u\in\mathbb{F}_{q}$. Let $\alpha$ be an element of $\mathbb{F}_{q^2}$ with $\alpha^2=u$. Then $\{1, \alpha\}$ forms a basis of $\mathbb{F}_{q^2}$ over $\mathbb{F}_q$. Let $\delta=a+b\alpha$ with $a, b\in\mathbb{F}_q$ and $x=y-(z-b)\alpha/2$. A simple computation shows that $2a=\Tr_q^{q^2}(\delta)$ and $x^q-x+\delta=a+z\alpha$. Then we have
\begin{align*}
f(x)&= (x^q-x+\delta)^{2q+3}+(x^q-x+\delta)^{2q}+\gamma x\\
&=(a^2-z^2u)^2(a+z\alpha)+(a-z\alpha)^2+\gamma(y-(z-b)\alpha/2)\\
 &=\gamma y +au^2z^4 + ( 1-2a^3 )uz^2 + a^2+ a^5 +
\left( u^2 z^5 - 2u a^2z^3 + (a^4 - 2a - \gamma/2)z + \gamma b/2\right)\alpha.
\end{align*}
Hence
\begin{equation*}
\begin{cases}
g_1(y, z)=\gamma y +au^2z^4 + ( 1-2a^3 )uz^2,\\
g_2(y, z)= u^2 z^5 - 2u a^2z^3 + (a^4 - 2a - \frac{\gamma }{2})z.
\end{cases}
\end{equation*}
Observe that $g_2(y, z)$ is a polynomial of $z$ and the $y$-part of $g_1(y, z)$ is $\gamma y$, which is a PP over $\mathbb{F}_q$. Therefore $(g_1(y,z),g_2(y,z))$ permutes $\mathbb{F}_q^2$ if and only if $g_2(y, z)=g_2(z)$ is a PP over $\mathbb{F}_q$.
Since $u$ is a non-square, the normalized form of $g_2(z)$ is as follows
$$z^5-\frac{2a^2}{ u}z^3+\frac{2a^4-4a-\gamma}{2 u^2}z.$$
Recall that $\gamma\neq0$.
It follows from Table 7.1 in \cite{LN97} that $g_2(z)$ is a PP over $\mathbb{F}_q$ if and only if one of the following occurs:
 \begin{enumerate}
\item [(i)] $5\mid q$, $a=0$ and $\gamma/2u^2$ is not a fourth power in $\mathbb{F}_q$, i.e., $\gamma/2$ is a fourth power or not a square in $\mathbb{F}_q$;
\item[(ii)] $q=9$, $a=0$ and $$\left(-\frac{\gamma}{2u^2}\right)^2=2,$$
which is equivalent to $\gamma^2=2u^4$. Using an argument analogous to that in Theorem \ref{Licao2023th}, we get $u^4=2$ and therefore $\gamma=\pm1$;
\item[(iii)] $q\equiv\pm2\pmod{5}$, $a\ne0$, and
 $$\left(-\frac{2a^2}{u}\right)^2=5\cdot\frac{2a^4-4a-\gamma}{2 u^2}.$$
 That is, $2a^4-20a-5\gamma=0$
\item[(iv)] $5\mid q$, $a\ne0$ and
$$\left(\frac{a^2}{u}\right)^2=\frac{2a^4-4a-\gamma}{ 2u^2}.$$
That is $\gamma+4a=0$.
\end{enumerate}
$\hfill\square$

 {\bf Proof of Theorem \ref{Appx.2}~} Since $q$ is an odd prime power, there exists a non-square element $u\in\mathbb{F}_{q}$. Let $\alpha$ be an element of $\mathbb{F}_{q^2}$ with $\alpha^2=u$. Then $\{1, \alpha\}$ is a basis of $\mathbb{F}_{q^2}$ over $\mathbb{F}_q$. Let $\delta=a+b\alpha$ with $a, b\in\mathbb{F}_q$ and $x=y-(z-b)\alpha/2$.  A simple computation shows that $2a=\Tr_q^{q^2}(\delta)$ and $x^q-x+\delta=a+z\alpha$. Then we have
\begin{align*}
f(x)&=(x^q-x+\delta)^{2q+4}+(x^q-x+\delta)^{q+5}+\gamma x\\
   &=(a^2-z^2u)^2(a+z\alpha)^2+(a^2-z^2u)(a+z\alpha)^4+\gamma(y-(z-b)\alpha/2)\\
  &=\gamma y-6a^2u^2z^4+4a^4uz^2+2a^6+\left(-2au^2z^5-4a^3uz^3+(6a^5-\frac{\gamma}{2})z+\frac{\gamma b}{2}\right)\alpha.
\end{align*}
Hence
\begin{equation*}
\begin{cases}
g_1(y, z)=\gamma y-6a^2u^2z^4+4a^4uz^2,\\
g_2(y, z)=-2au^2z^5-4a^3uz^3+(6a^5-\frac{\gamma}{2})z.
\end{cases}
\end{equation*}
%
Observe that $g_2(y, z)$ is a polynomial of $z$ and the $y$-part of $g_1(y, z)$ is $\gamma y$, which is a PP over $\mathbb{F}_q$. Therefore$(g_1(y,z),g_2(y,z))$ permutes $\mathbb{F}_q^2$ if and only if $g_2(y, z)=g_2(z)$ is a PP over $\mathbb{F}_q$. When $a=0$, $g_2(z)$ reduces to $g_2(z)=-\gamma z/2$, which is a PP of $\mathbb{F}_{q}$ as $\gamma\neq0$.  When $a\neq0$, the normalized form of $g_2(z)$ is
$$z^5+\frac{2a^2}{ u}z^3-\frac{12a^5-\gamma}{4a u^2}z.$$
In this case, by Table 7.1 in \cite{LN97}, the polynomial $g_2(z)$  is a PP over $\mathbb{F}_q$ if and only if  $q\equiv\pm2\pmod{5}$ and
$$\left(\frac{2a^2}{u}\right)^2=-5\cdot\frac{12a^5-\gamma}{4a u^2}.$$
That is $5\gamma=76a^5$, or $5\mid q$ and
$$\left(\frac{a^2}{u}\right)^2=-\frac{12a^5-\gamma}{4a u^2}.$$
That is $\gamma=16a^5\equiv a^5\pmod 5$.
$\hfill\square$

 {\bf Proof of Theorem \ref{Appx.3}~}
 Since $q$ is an odd prime power, there is a non-square element $u\in\mathbb{F}_{q}$. Let $\alpha$ be an element of $\mathbb{F}_{q^2}$ with $\alpha^2=u$. Then $\{1, \alpha\}$ is a basis of $\mathbb{F}_{q^2}$ over $\mathbb{F}_q$. Let $\delta=a+b\alpha$ with  $a, b\in\mathbb{F}_q$ and $x=y-(z-b)\alpha/2$. Then $2a=\Tr_{q}^{q^2}(\delta)$,
 $x^q-x+\delta=a+z\alpha$, and we have
\begin{align*}
f(x)&=(x^q-x+\delta)^{2q+4}+(x^q-x+\delta)^{q}+\gamma x \\
  &=(a^2-z^2u)^2(a+z\alpha)^2+(a-z\alpha)+\gamma(y-(z-b)\alpha/2)\\
  &=\gamma y+u^3z^6-a^2u^2z^4-a^4uz^2+a^6+a+\left(2au^2z^5-4a^3uz^3+(2a^5-1-\frac{\gamma}{2})z+\frac{b\gamma}{2}\right)\alpha.
\end{align*}
Hence
\begin{equation*}
\begin{cases}
g_1(y, z)=\gamma y+u^3z^6-a^2u^2z^4-a^4uz^2,\\
g_2(y, z)=2au^2z^5-4a^3uz^3+(2a^5-1-\frac{\gamma}{2})z.
\end{cases}
\end{equation*}
Observe that $g_2(y, z)$ is a polynomial of $z$ and the $y$-part of $g_1(y, z)$ is $\gamma y$. Therefore $(g_1(y,z),g_2(y,z))$ permutes $\mathbb{F}_q^2$ if and only if $g_2(y, z)=g_2(z)$ is a PP over $\mathbb{F}_q$. When $a=0$, $g_2(z)$ reduces to a linear polynomial, and thus induces a permutation of $\mathbb{F}_q$ only if $\gamma+2\neq0$. When $a\neq0$,
the normalized form of $g_2(z)$  is
$$z^5-\frac{2a^2}{ u}z^3+\frac{4a^5-2-\gamma}{4a u^2}z.$$
In this case, by Table 7.1 in \cite{LN97} , the polynomial $g_2(z)$ is a PP over $\mathbb{F}_q$ if and only if $q\equiv\pm2\pmod5$ and
 $$\left(\frac{2a^2}{u}\right)^2=5\cdot\frac{4a^5-2-\gamma}{4a u^2}.$$
 That is $5\gamma=4a^2-10$, or $5\mid q$ and
 $$\left(\frac{a^2}{u}\right)^2=\frac{4a^5-2-\gamma}{4a u^2}.$$
That is $\gamma=-2\equiv3\pmod5$.
$\hfill\square$

{\bf Proof of Theorem \ref{Appx.4}~}
Since $q$ is an odd prime power, there is a non-square element $u\in\mathbb{F}_{q}$. Let $\alpha$ be an element of $\mathbb{F}_{q^2}$ with $\alpha^2=u$. Then $\{1, \alpha\}$ is a basis of $\mathbb{F}_{q^2}$ over $\mathbb{F}_q$. Let $\delta=a+b\alpha$ with $a, b\in\mathbb{F}_q$ and $x=y-(z-b)\alpha/2$. A simple computation shows that $2a=\Tr_{q}^{q^2}(\delta)$ and $x^q-x+\delta=a+z\alpha$. Then we have
\begin{align*}
f(x)&=(x^q-x+\delta)^{2q+3}+(x^q-x+\delta)^{5q}+\gamma x \\
  &=2a(a^2+3z^2u)(a-z\alpha)^2+\gamma(y-(z-b)\alpha/2)\\
  &=\gamma y+ 6au^2z^4+8a^3uz^2+2a^5-\left(12a^2uz^3+(4a^4+\frac{\gamma}{2})z-\frac{\gamma b}{2}\right)\alpha.
  \end{align*}
Hence
\begin{equation*}
\begin{cases}
g_1(y, z)=\gamma y+6au^2z^4+8a^3uz^2,\\
 g_2(y, z)=12a^2uz^3+(4a^4+\frac{\gamma}{2})z.
\end{cases}
\end{equation*}
Observe that $g_2(y, z)$ is a polynomial of $z$ and the $y$-part of $g_1(y, z)$ is $\gamma y$. Therefore $(g_1(y,z),g_2(y,z))$ permutes $\mathbb{F}_q^2$ if and only if $g_2(y, z)=g_2(z)$ is a PP over $\mathbb{F}_q$. It is clear that for $a=0$, the polynomial $g_2(z)=\gamma/2$ is a PP of $\mathbb{F}_q$ (since $\gamma\neq0$).
Now assume that $a\neq0$. If $3\mid q$, we have $g_2(z)=(a^4+\frac{\gamma}{2})z$, which is a PP of $\mathbb{F}_{q}$ if and only if $a^4+\frac{\gamma}{2}\neq0$, i.e., $\gamma\neq a^4$. If $3 \nmid q$,
the normalized form of $g_2(z)$  is
$$z^3+\frac{8a^4+\gamma}{24a^2 u}z.$$
In this case, according to Table 7.1 in \cite{LN97},  $g_2(z)$ is a PP over $\mathbb{F}_q$ if and only if  $q\not\equiv1\pmod{3}$ and $8a^4+\gamma=0$.

In summary, $(g_1(y,z)), g_2(y,z)$ permutes $\mathbb{F}_q^2$ if and only if $a=0$, or $a\neq0 ,3\mid q$ and $\gamma\neq a^4$, or $a\neq0$, $q\equiv2\pmod3$ and  $\gamma=-8a^4$. Applying Lemma \ref{lem-n=2}, we complete the proof.
$\hfill\square$

{\bf Proof of Theorem \ref{Appx.5}~}
Since $q$ is an odd prime power, there is a non-square element $u\in\mathbb{F}_{q}$. Let $\alpha$ be an element of $\mathbb{F}_{q^2}$ with $\alpha^2=u$. Then $\{1, \alpha\}$ is a basis of $\mathbb{F}_{q^2}$ over $\mathbb{F}_q$. Let $\delta=a+b\alpha$  with $a, b\in\mathbb{F}_q$ and $x=y-(z-b)\alpha/2$. Then $2a=\Tr_{q}^{q^2}(\delta)$, $x^q-x+\delta=a+z\alpha$, and we have
\begin{align*}
f(x) &= (x^q-x+\delta)^{2q+4}+(x^q-x+\delta)^{2q}+\gamma x\\
 &=(a^2-z^2u)^2(a+z\alpha)^2+(a-z\alpha)^2+\gamma(y-(z-b)\alpha/2)\\
  &=\gamma y+u^3z^6-a^2u^2z^4+(1-a^4)uz^2+a^6+a^2+\left(2au^2z^5-4a^3uz^3+\left(2a^5-2a-\frac{\gamma }{2}\right) z+\frac{\gamma b}{2}\right)\alpha.
\end{align*}
Hence
\begin{equation*}
\begin{cases}
g_1(y, z)=\gamma y+u^3z^6-a^2u^2z^4+(1-a^4)uz^2,\\
g_2(y, z)=2au^2z^5-4a^3uz^3+\left(2a^5-2a-\frac{\gamma }{2}\right)z.
\end{cases}
\end{equation*}
Observe that $g_2(y, z)$ is a polynomial of $z$ and the $y$-part of $g_1(y, z)$ is $\gamma y$. Therefore $(g_1(y,z),g_2(y,z))$ permutes $\mathbb{F}_q^2$ if and only if $g_2(y, z)=g_2(z)$ is a PP over $\mathbb{F}_q$. Note that if $a=0$,  we have $g_2(z)=-\frac{\gamma}{2}z$, which clearly is a PP over $\mathbb{F}_{q}$ since $\gamma\neq0$. Now assume $a\neq0$. Then
the normalized form of $g_2(z)$  is
$$z^5-\frac{2a^2}{u}z^3+\frac{4a^5-4a-\gamma}{4au^2}z.$$
From Table 7.1 in \cite{LN97}, $g_2(z)$ is a PP over $\mathbb{F}_q$ if and only if $ q\equiv\pm2\pmod{5}$ and
$$\left(\frac{-2a^2}{u}\right)^2=5 \cdot \frac{4a^5-4a-\gamma}{4au^2},$$
i.e., $5\gamma=4a^5-20a$, or $5\mid q$ and
$$\left(\frac{a^2}{u}\right)^2=\frac{4a^5-4a-\gamma}{4au^2},$$ i.e., $\gamma=-4a$.
$\hfill\square$

{\bf Proof of Theorem \ref{Appx.6}~}
Since $q$ is an odd prime power, there is a non-square element $u\in\mathbb{F}_{q}$. Let $\alpha$ be an element of $\mathbb{F}_{q^2}$ with $\alpha^2=u$. Then $\{1, \alpha\}$ is a basis of $\mathbb{F}_{q^2}$ over $\mathbb{F}_q$. Let $\delta=a+b\alpha$ with $a, b\in\mathbb{F}_q$ and $x=y-(z-b)\alpha/2$. A simple computation shows that $2a=\Tr_{q}^{q^2}(\delta)$ and $x^q-x+\delta=a+z\alpha$. Then we have
\begin{align*}
f(x) &= (x^q-x+\delta)^{ q+5}+(x^q-x+\delta)^{2 q}+\gamma x\\
&=(a^2-z^2u)(a+z\alpha)^4+(a-z\alpha)^2+\gamma(y-(z-b)\alpha/2)\\
&=\gamma y-u^3z^6-5a^2u^2z^4+(5a^4+1)uz^2+a^6+a^2+\left(-4au^2z^5+\left(4a^5-2a-\frac{\gamma}{2}\right)z+\frac{\gamma b}{2}\right)\alpha.
\end{align*}
Hence
\begin{equation*}
\begin{cases}
g_1(y, z)=\gamma y-u^3z^6-5a^2u^2z^4+(5a^4+1)uz^2,\\
g_2(y,z)=-4au^2z^5+\left(4a^5-2a-\frac{\gamma}{2}\right)z.
\end{cases}
\end{equation*}
Observe that $g_2(y, z)$ is a polynomial of $z$ and the $y$-part of $g_1(y, z)$ is $\gamma y$. Therefore $(g_1(y,z),g_2(y,z))$ permutes $\mathbb{F}_q^2$ if and only if $g_2(y, z)=g_2(z)$ is a PP over $\mathbb{F}_q$. When $a=0$, it is clear that $g_2(z)=-\frac{\gamma}{2}z$ is a PP over $\mathbb{F}_q$ as $\gamma\ne0$. When $a\ne0$, the normalized form  of $g_2(z)$  is
$$z^5-\frac{8a^5-4a-\gamma}{8au^2}z.$$
In this case, by Table 7.1 in \cite{LN97}, the polynomial $g_2(z)$ is a PP over $\mathbb{F}_q$ if and only if $q\not\equiv1\pmod5$ and $8a^5-4a-\gamma=0$, i.e., $\gamma=8a^5-4a$, or $5\mid q$ and
$$\frac{8a^5-4a-\gamma}{8a}$$
is a fourth power or not a square, or $q=9$ and $$2=\left(\frac{\gamma-8a^5+4a}{8au^2}\right)^2\equiv\left(\frac{\gamma+a^5+a}{2au^2}\right)^2\pmod3,$$
 which implies $(\gamma+a^5+a)^2=2a^2u^4$.
 Since $u^4=2$ in $\mathbb{F}_9$ ((as shown in the proof of Theorem \ref{Licao2023th}), we obtain $(\gamma+a^5+a)^2=a^2$. Taking square roots yields  $\gamma+a^5+a=\pm a$, and therefore
 $\gamma=2a^5$ or $\gamma=2a^5-2a$.

$\hfill\square$

{\bf Proof of Theorem \ref{Liu-Jiang-Zou2}~}
 Since $q$ is an odd prime power,  there is a non-square element $u\in\mathbb{F}_{q}$. Let $\alpha$ be an element of $\mathbb{F}_{q^2}$ with $\alpha^2=u$. Then $\{1, \alpha\}$ is a basis of $\mathbb{F}_{q^2}$ over $\mathbb{F}_q$. Let $\delta=a+b\alpha$ with $a, b\in\mathbb{F}_q$ and $x=y-(z-b)\alpha/2$. Then $2a=\Tr_q^{q^2}(\delta)$ and $x^q-x+\delta=a+z\alpha$. We have
\begin{align*}
f(x)&=(x^q-x+\delta)^{q+2}+\gamma(x^q+x)\\
  &=(a^2-z^2u)(a+z\alpha)+2\gamma y\\
   &=2\gamma y-auz^2+a^3+(-u z^3 +a^2z)\alpha.
\end{align*}
Hence
\begin{equation*}
\begin{cases}
g_1(y, z)=2\gamma y-auz^2,\\
g_2(y, z)=-u z^3 +a^2z.
\end{cases}
\end{equation*}
Observe that $g_2(y, z)$ is a polynomial of $z$ and the $y$-part of $g_1(y, z)$ is $2\gamma y$. Hence, $(g_1(y,z),g_2(y,z))$ permutes $\mathbb{F}_q^2$ if and only if $g_2(y, z)=g_2(z)$ is a PP over $\mathbb{F}_q$.
Since $u$ is a non-square, it follows from Table 7.1 in \cite{LN97} that $g_2(z)$ is a PP over $\mathbb{F}_q$ if and only if $q\not\equiv1\pmod{3}$ and $a=0$, or $3\mid q$ and $a\ne0$.  Therefore, $f(x)$ is a PP of $\mathbb{F}_{q^2}$ if and only if $q\not\equiv1\pmod{3}$ and $\Tr_q^{q^2}(\delta)=0$, or $q\equiv0\pmod3$ and $\Tr_q^{q^2}(\delta)\ne0$.
$\hfill\square$

{\bf Proof of Theorem \ref{Liu-Jiang-Zou3}~}
Since $q$ is an odd prime power, there is a non-square element $u\in\mathbb{F}_{q}$. Let $\alpha$ be an element of $\mathbb{F}_{q^2}$ with $\alpha^2=u$. Then $\{1, \alpha\}$ is a basis of $\mathbb{F}_{q^2}$ over $\mathbb{F}_q$. Let $\delta=a+b\alpha$ with $a, b\in\mathbb{F}_q$ and $x=y-(z-b)\alpha/2$. Then $2a=\Tr_q^{q^2}(\delta)$ and $x^q-x+\delta=a+z\alpha$. We have
\begin{align*}
f(x)&=(x^q-x+\delta)^{3q+2}+\gamma(x^q+x)\\
&=(a^2-z^2u)^2(a-z\alpha)+2\gamma y\\
&=2\gamma y+au^2z^4-2a^3uz^2+a^5+\left(-u^2z^5+2a^2uz^3-a^4z\right)\alpha.
\end{align*}
Hence
\begin{equation*}
\begin{cases}
g_1(y, z)=2\gamma y+au^2z^4-2a^3uz^2,\\
g_2(y, z)=-u^2z^5+2a^2uz^3-a^4z.
\end{cases}
\end{equation*}
Observe that $g_2(y, z)$ is a polynomial of $z$ and the $y$-part of $g_1(y, z)$ is $2\gamma y$. Hence, $(g_1(y,z),g_2(y,z))$ permutes $\mathbb{F}_q^2$ if and only if $g_2(y, z)=g_2(z)$ is a PP over $\mathbb{F}_q$. Note that the  normalized form of $g_2(z)$ is as follows
$$z^5-\frac{2a^2}{u}z^3+\frac{a^4}{u^2}z.$$
By Table 7.1 in \cite{LN97}, $g_2(z)$ is a PP over $\mathbb{F}_q$ if and only if $q\not\equiv1\pmod{5}$ and $a=0$, or $5\mid q$ and $a\ne0$. It  follows that $f(x)$ is a PP of $\mathbb{F}_{q^2}$ if and only if $q\not\equiv1\pmod{5}$ and $\Tr_q^{q^2}(\delta)=0$, or $q\equiv0\pmod5$ and $\Tr_q^{q^2}(\delta)\ne0$.
$\hfill\square$

{\bf Proof of Theorem \ref{Liu-Jiang-Zou4}~}
 Since $q$ is an odd prime power,  there is a non-square element $u\in\mathbb{F}_{q}$. Let $\alpha$ be an element of $\mathbb{F}_{q^2}$ with $\alpha^2=u$. Then $\{1, \alpha\}$ is a basis of $\mathbb{F}_{q^2}$ over $\mathbb{F}_q$. Let $\delta=a+b\alpha$ with $a, b\in\mathbb{F}_q$ and $x=y-(z-b)\alpha/2$. Then $2a=\Tr_q^{q^2}(\delta)$ and  $x^q-x+\delta=a+z\alpha$. We have
\begin{align*}
f(x)&=(x^q-x+\delta)^{4q+2}+\gamma(x^q+x)\\
&=(a^2-z^2u)^2(a-z\alpha)^2+2\gamma y\\
  &=2\gamma y+u^3z^6-a^2u^2z^4-a^4uz^2+a^6+\left(-2au
  ^2z^5+4a^3uz^3-2a^5z\right)\alpha.
 \end{align*}
Hence
\begin{equation*}
\begin{cases}
g_1(y, z)=2\gamma y-u^3z^6-a^2u^2z^4-a^4uz^2,\\
g_2(y, z)=-2au
  ^2z^5+4a^3uz^3-2a^5z.
\end{cases}
\end{equation*}
%
Observe that $g_2(y, z)$ is a polynomial of $z$ and the $y$-part of $g_1(y, z)$ is $2\gamma y$. Hence, $(g_1(y,z),g_2(y,z))$ permutes $\mathbb{F}_q^2$ if and only if $g_2(y, z)=g_2(z)$ is a PP over $\mathbb{F}_q$. Clearly, if $a=0$, then $g_2(z)=0$ for all $z$, so $g_2(z)$ is not a  PP over $\mathbb{F}_q$.  We therefore assume $a\neq 0$ in the following. Then $g_2(z)$ can be normalized as
$$z^5-\frac{2a^2}{u}z^3+\frac{a^4}{u^2}z.$$
The following discussion proceeds exactly as in Theorem \ref{Liu-Jiang-Zou3}. Therefore, $f(x)$ is a PP over $\mathbb{F}_{q^2}$ if and only if $q\equiv0\pmod5$ and $\Tr_q^{q^2}(\delta)\neq0$.
$\hfill\square$

{\bf Proof of Theorem \ref{Liu-Jiang-Zou6}~}
Since $q$ is an odd prime power, there is a non-square element $u\in\mathbb{F}_{q}$. Let $\alpha$ be an element of $\mathbb{F}_{q^2}$ with $\alpha^2=u$. Then $\{1, \alpha\}$ is a basis of $\mathbb{F}_{q^2}$ over $\mathbb{F}_q$. Let $\delta=a+b\alpha$ with $a, b\in\mathbb{F}_q$ and $x=y-(z-b)\alpha/2$. Then $2a=\Tr_q^{q^2}(\delta)$ and  $x^q-x+\delta=a+z\alpha$. We have
\begin{align*}
f(x)&=(x^q-x+\delta)^{3q+2}+(x^q-x+\delta)^{4q+2}+\gamma(x^q+x)\\
&=(a^2-z^2u)^2(a-z\alpha)+(a^2-z^2u)^2(a-z\alpha)^2+2\gamma y\\
  &=2\gamma y+u^3z^6+au^2(1-a)z^4-a^3u(a+2)z^2+a^6+a^5+(2a+1)(u^2z^5-2a^2uz^3+a^4z)\alpha.
\end{align*}
Hence
\begin{equation*}
\begin{cases}
g_1(y, z)=2\gamma y+u^3z^6+au^2(1-a)z^4-a^3u(a+2)z^2,\\
 g_2(y, z)=(2a+1)(u^2z^5-2a^2uz^3+a^4z).
\end{cases}
\end{equation*}
Observe that $g_2(y, z)$ is a polynomial of $z$ and the $y$-part of $g_1(y, z)$ is $2\gamma y$. Hence $(g_1(y,z),g_2(y,z))$ permutes $\mathbb{F}_q^2$ if and only if $g_2(y, z)=g_2(z)$ is a PP over $\mathbb{F}_q$. If $2a+1=0$, the $g_2(z)=0$ cannot be a PP over $\mathbb{F}_q$. So assume that $2a+1\ne0$. Then the normalized form of $g_2(z)$ is
$$z^5-\frac{2a^2}{u}z^3+\frac{a^4}{u^2}z.$$
From Table 7.1 in \cite{LN97}, the polynomial
$g_2(z)$ is a PP over $\mathbb{F}_q$ if and only if $q\not\equiv1\pmod{5}$ and $a=0$, or $5\mid q$ and $a\neq 0$. Combining with $2a+1\ne0$, it follows that
$f(x)$ is a PP of $\mathbb{F}_{q^2}$ if and only if  $q\not\equiv1\pmod{5}$ and $\Tr_q^{q^2}(\delta)=0$, or $q\equiv0\pmod5$ and  $\Tr_q^{q^2}(\delta)\not\in\{-1,0\}$.
$\hfill\square$

\end{document}